\theoremstyle{plain}
\newtheorem{thm}{Theorem}
\newtheorem{lem}{Lemma}
\newtheorem{pr}{Proposition}
\newtheorem{cor}{Corollary}
\theoremstyle{definition} 
\theoremstyle{remark} 
\newtheorem{rem}{Remark}
\newtheoremstyle{claim}
  {0.5\topsep}   
  {0.5\topsep}   
  {\itshape}  
  {0pt}       
  {\itshape} 
  {.}         
  {5pt plus 1pt minus 1pt} 
  {#1 #3}          
\theoremstyle{claim}
\let\AA\relax
\newcommand{\AA}{\mathbb{A}}
\newcommand{\afr}{\mathfrak{a}}
\newcommand{\Bc}{\mathcal{B}}
\newcommand{\C}{\mathbb{C}}
\newcommand{\CC}{\mathbb{C}}
\newcommand{\EE}{\mathbb{E}}
\newcommand{\Ec}{\mathcal{E}}
\newcommand{\Gc}{\mathcal{G}}
\newcommand{\gfr}{\mathfrak{g}}
\newcommand{\Ic}{\mathcal{I}}
\newcommand{\Kc}{\mathcal{K}}
\newcommand{\NN}{\mathbb{N}}
\newcommand{\OO}[1]{O\left(#1\right)}
\newcommand{\QQ}{\mathbb{Q}}
\newcommand{\RR}{\mathbb{R}}
\newcommand{\TT}{\mathbb{T}}
\newcommand{\UU}{\mathbb{U}}
\newcommand{\xb}{\mathbf{x}}
\newcommand{\Yc}{\mathcal{Y}}
\newcommand{\yb}{\mathbf{y}}
\newcommand{\Xc}{\mathcal{X}}
\newcommand{\ZZ}{\mathbb{Z}}
\newcommand{\Zc}{\mathcal{Z}}
\newcommand{\Zfr}{\mathfrak{Z}}
\newcommand{\Pb}{\mathbb{P}}
\renewcommand{\Pr}[1]{\mathbb{P}\left[#1\right]}
\newcommand{\E}{\mathbb{E}}
\newcommand{\Me}[1]{\mathbb{E}\left[#1\right]}
\newsavebox\myboxA
\newsavebox\myboxB
\newlength\mylenA
\newcommand*\xoverline[2][0.8]{
  \sbox{\myboxA}{$\m@th#2$}
  \setbox\myboxB\null
  \ht\myboxB=\ht\myboxA
  \dp\myboxB=\dp\myboxA
  \wd\myboxB=#1\wd\myboxA
  \sbox\myboxB{$\m@th\overline{\copy\myboxB}$}
  \setlength\mylenA{\the\wd\myboxA}
  \addtolength\mylenA{-\the\wd\myboxB}
  \ifdim\wd\myboxB<\wd\myboxA
     \rlap{\hskip 0.5\mylenA\usebox\myboxB}{\usebox\myboxA}
  \else
     \hskip -0.5\mylenA\rlap{\usebox\myboxA}{\hskip 0.5\mylenA\usebox\myboxB}
  \fi}
\newcommand{\xo}[1]{\xoverline{#1}}
\let\Im\relax
\DeclareMathOperator{\Im}{\mathrm{Im}}
\let\Re\relax
\DeclareMathOperator{\Re}{\mathrm{Re}}
\newenvironment{EA}[1]{\begin{IEEEeqnarray*}{#1}}{\end{IEEEeqnarray*}}
\newcommand{\EAy}{\IEEEyesnumber}
\newcommand{\LRI}[4]{\,\leftidx{^{#3}}{#1}{^{#2}_{#4}}}
\newcommand{\Mes}[2]{(1-\mathbb{E}_{#1})\left[#2\right]}
\begin{document}
\title{Local law for the product of independent non-Hermitian matrices with independent entries}
\author{Yuriy Nemish\\
\\ \textit{University of Toulouse, France}}
\date{\today}
\maketitle
\begin{abstract}
We consider products of independent square non-Hermitian random matrices.
More precisely, let $X_1,\ldots,X_n$ be independent $N\times N$ random matrices with independent entries (real or complex with independent real and imaginary parts) with zero mean and variance $\frac{1}{N}$.
Soshnikov-O'Rourke \cite{OrouSosh} and Götze-Tikhomirov \cite{GotzTikh} showed that the empirical spectral distribution of the product of $n$ random matrices with \emph{iid} entries converges to 
\begin{equation}\label{eq:abs0}
  \frac{1}{n\pi}1_{|z|\leq 1}|z|^{\frac{2}{n}-2}dz d\xoverline{z}.
\end{equation}
We prove that if the entries of the matrices $X_1,\ldots,X_n$ satisfy uniform subexponential decay condition, then   in the bulk the convergence of the ESD of  $X_1\cdots X_n$ to \eqref{eq:abs0} holds up to the scale $N^{-1/2+\varepsilon}$.

\end{abstract}
\section{Introduction}
\label{sec:introduction}

In this paper we study the spectrum of the product of non-Hermitian random matrices with independent entries.

The study of the spectrum of non-Hermitian random matrices dates back to 1965, when Ginibre \cite{Gini} calculated the joint density function for the eigenvalues of $N\times N$ non-symmetric random matrix with independent standard Gaussian entries (Ginibre ensembles).
The similar result for the product of independent complex Ginibre matrices was obtained in \cite{AkemBurd} by Akemann and Burda.
One crucial property of random matrices with Gaussian entries is the determinantal structure, using which exact formulas for many important parameters that characterise the distribution of the eigenvalues (such as $k$-point correlation functions) can be obtained.
If the entries of the matrix are not Gaussian, then we usually do not have exact formulas for the distribution of eigenvalues for finite $N$.
Nevertherless, in many cases as $N$ goes to infinity the spectrum of the models with non-Gaussian coefficients behaves similarly to the Ginibre case.
This is known as universality phenomena.
The aim of this article is to show that universality holds for certain local properties of the products of non-Hermitian random matrices.
We now give a brief review of some known universality results for non-Hermitian random matrices.

\emph{Global regime.}
It can be shown using the exact formula for the eigenvalue density from \cite{Gini}, that the empirical spectral measure defined on the eigenvalues of the Ginibre ensemble with entries normalised to have variance $N^{-1}$ converges weakly to the uniform distribution on the unit disk.
The corresponding universality result, known as the Circular Law theorem and proven in a series of papers between 1985 and 2010 (see \cite{TaoVuKris} for the final version), states that if the entries of the matrix are independent with zero mean and variance $N^{-1}$, then the empirical spectral distribution (ESD) converges weakly to the uniform distribution on the unit disk.
The global regime for the products was studied in \cite{GotzTikh} and \cite{OrouSosh} by Götze-Tikhomirov and O'Rourke-Soshnikov, who established that the ESD of the product of $n$ independent non-Hermitian random matrices with normalised entries converges weakly to the $n$th power of the circular law. 
Note, that in \cite{OrouSosh} an additional $2+\varepsilon$-moment assumption was used.

\begin{figure}
  \centering
\includegraphics[height=5cm]{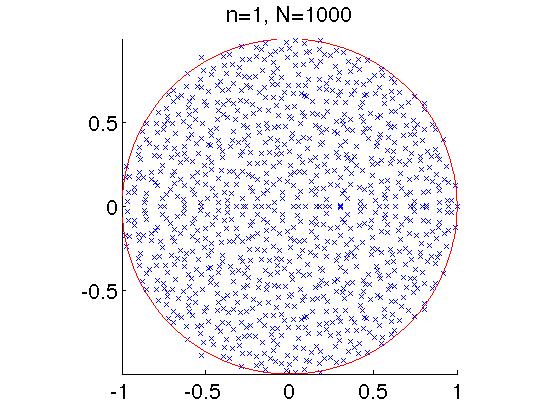}
\includegraphics[height=5cm]{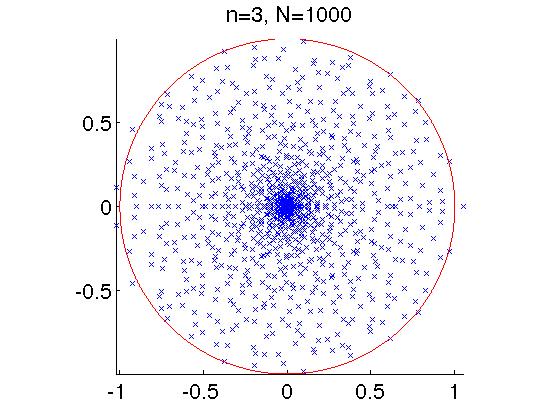}
  \caption{Spectrum of a random Gaussian matrix of size 1000 (left); and spectrum of a product of three independent Gaussian matrices of size 1000 (right).}
\end{figure}

\emph{Intermediate and local regimes.} Global regime deals with weak covergence, which considers the convergence on the subsets containing $cN$ eigenvalues for some $c\geq 0$. 
In other words, we normalize the eigenvalues to have the limiting ESD with compact support.
On the other hand, if we change the normalization of the matrix in such a way, that for any compact set $K$ the number of eigenvalues situated in $K$ is much smaller than $N$, we enter the mesoscopic or itermediate regime.
The smallest scale on which we can expect the linear statistics to have a deterministic behaviour in the limit can be obtained by multiplying the matrix by $\sqrt{N}$.
In this microscopic regime each compact set in the bulk contains only a finite number of eigenvalues.

There has been a remarkable progress recently in the study of universality in the intermediate and local regimes for non-Hermitian matrices.
In \cite{TaoVuNHLoc} Tao and Vu proved universality for the $k$-point correlation functions (see \cite{TaoVuNHLoc} or \cite{PastShch} for definition) under the assumptions that the distributions of the entries of the matrix have exponentially vanishing tails and first four moments matching the moments of Gaussian random variable with zero mean and variance $N^{-1}$.
The last assumption is crucial for \cite{TaoVuNHLoc}, as the approach of Tao and Vu relies on the $4$th moment comparison theorem.

In a series of papers \cite{BourYauYin}, \cite{BourYauYin2} and \cite{Yin} Bourgade, Yau and Yin proved the universality of the local law up to the optimal scale (which can be interpreted as the universality of the $1$-point correlation function) without imposing the $4$th moment matching condition.
The goal of our article is to show a similar result for a product of independent non-Hermitian matrices.
We now introduce some basic objects and fix the notation, that will allow us to state precisely both theorems.

Let $X_1, \ldots,X_n$ be independent $N\times N$ matrices, $X_a=(\LRI{x}{}{a}{ij})_{1\leq i,j\leq N}$, with independent entries (real or complex with independent real and imaginary parts) having zero mean, variance $N^{-1}$ and satisfying the uniform subexponential decay condition
\begin{equation}
  \label{eq:int_2}
  \exists \theta>0
  ,\mbox{ such that }
  \max_{1\leq a \leq n} \max_{1\leq i,j \leq N} \Pb[|\sqrt{N}\LRI{x}{}{a}{ij}|\geq t]\leq \theta^{-1} e^{-t^{\theta}}
  .
\end{equation}

Let $f:\CC\rightarrow \RR_+$ be a smooth non-negative function with compact support, such that $\|f\|_{\infty}\leq C,\|f'\|\leq N^{C}$ for some constant $C>0$.
For any $d\in \RR_+$ and $z_0\in \CC$ we define a $N^{-d}$-rescaling of $f$ around $z_0$ by 
\begin{equation}
  f_{z_0}(z)
  =
  N^{2d}f(N^d(z-z_0))
  .
\end{equation}
For two $N$-dependent random variables $A_N\in \CC$ and $B_N\in \RR_+$ we say that $A$ is stochastically dominated by $B$ (denoted by $A\prec B$) if
\begin{equation}
  \forall D,\varepsilon>0
  \quad
  \Pb[|A_N|\geq N^{\varepsilon} B_N]
  \leq
  N^{-D}
  .
\end{equation}
\begin{thm}[Bourgade-Yau-Yin]
 Let $\mu_1,\ldots,\mu_N$ be the eigenvalues of $X_1$.
Then for any $d\in (0,1/2]$, any $\tau>0$  and $z_0\in \CC$ with $|z_0|\leq \tau^{-1} $
\begin{equation}
  \left(\frac{1}{N}\sum_{j=1}^N f_{z_0}(\mu_j)-\frac{1}{\pi}\int_{|z|< 1}f_{z_0}(z) dz d\xo{z}\right)
  \prec
  N^{-1+2d}\|\Delta f\|_{L_1}
  ,
\end{equation}
where $f_{z_0}$ is the $N^{-d}$-rescaling of $f$ around $z_0$.
\end{thm}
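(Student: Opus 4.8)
The plan is to reduce the statement, via Girko's Hermitization, to a logarithmic potential estimate, and then to control the resulting log-determinant by a local law for the Hermitization of $X_1-z$ together with a lower bound on its least singular value.

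\emph{Step 1: Hermitization.} Using $\tfrac1{2\pi}\Delta_z\log|z-w|=\delta_w(z)$ and integrating by parts twice, for the $N^{-d}$-rescaling $f_{z_0}$ of $f$ one obtains
\begin{equation}
  \frac1N\sum_{j=1}^N f_{z_0}(\mu_j)-\frac1\pi\int_{|z|<1}f_{z_0}(z)\,dz\,d\xo{z}
  =\frac1{4\pi N}\int_{\CC}\Delta f_{z_0}(z)\Bigl(\log\bigl|\det(X_1-z)\bigr|^2-\ell(z)\Bigr)\,dz\,d\xo{z},
\end{equation}
where $\ell(z)=N\int_0^\infty\log x\,\rho_z(dx)$, $\rho_z$ is the deterministic self-consistent measure for the eigenvalues of $H_z:=(X_1-z)(X_1-z)^*$, and I have used the deterministic identity $\tfrac1{4\pi}\Delta_z\int_0^\infty\log x\,\rho_z(dx)=\tfrac1\pi\mathbf 1_{|z|<1}$, i.e.\ that the logarithmic potential of $\rho_z$ reproduces the circular law. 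Taking absolute values inside the integral and using $\|\Delta f_{z_0}\|_{L_1}=N^{2d}\|\Delta f\|_{L_1}$ together with $\mathrm{supp}\,f_{z_0}\subset\{|z-z_0|\leq CN^{-d}\}$, the theorem reduces to the pointwise estimate
\begin{equation}\label{eq:pt}
  \Bigl|\tfrac1N\log\bigl|\det(X_1-z)\bigr|^2-\tfrac1N\ell(z)\Bigr|\prec N^{-1}\qquad\text{uniformly for }z\in\mathrm{supp}\,f_{z_0}.
\end{equation}

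\emph{Step 2: Local law for the Hermitization.} For fixed $z$ with $|z|\leq\tau^{-1}$ I would analyse the resolvent of the chiral matrix $Y_z=\bigl(\begin{smallmatrix}0&X_1-z\\(X_1-z)^*&0\end{smallmatrix}\bigr)$, whose eigenvalues are $\pm\sqrt{\sigma_i(z)}$. A Schur-complement expansion of the diagonal blocks of $(Y_z-w)^{-1}$, the quadratic-form large-deviation bounds afforded by \eqref{eq:int_2}, and a fluctuation-averaging step produce a scalar self-consistent equation (a quadratic/cubic relation in $|z|^2$) whose stable solution is the Stieltjes transform $m_z$ of $\rho_z$; propagating it gives the local law $|\tfrac1N\tr(H_z-w)^{-1}-m_z(w)|\prec(N\eta)^{-1}$ for $w=E+\mathrm i\eta$ with $\eta\geq N^{-1+\varepsilon}$ (and for real $w<0$ with $\eta$ replaced by $\mathrm{dist}(w,\mathrm{supp}\,\rho_z)$), with the sharper estimates near the soft edges and near the hard edge $E=0$, and hence rigidity of the counting function $\mathcal N_z(x):=\#\{i:\sigma_i(z)\leq x\}$, namely $|\mathcal N_z(x)-N\!\int_0^x\!\rho_z|\prec N^{\varepsilon}$ uniformly in $x$.

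\emph{Step 3: Least singular value and conclusion of \eqref{eq:pt}.} Writing $\tfrac1N\log\det(H_z+\eta_0)-\tfrac1N\log\det(H_z+T)=\int_{\eta_0}^{T}\tfrac1N\tr(H_z+y)^{-1}\,dy$, the local law of Step 2 compares $\tfrac1N\log\det(H_z+\eta_0)$ to its deterministic counterpart with error $\prec\tfrac1N\log(T/\eta_0)\prec N^{-1}$ once $\eta_0=N^{-1+\varepsilon}$ and $T$ is a large constant. For the remaining piece $-\tfrac1N\sum_i\log(1+\eta_0/\sigma_i(z))$ I would integrate by parts against $\mathcal N_z$ and use its rigidity together with a lower bound $\sqrt{\sigma_N(z)}=s_{\min}(X_1-z)\succ N^{-C}$ and a matching upper bound $\sqrt{\sigma_N(z)}\prec N^{-1+\varepsilon}$, available under \eqref{eq:int_2} by a Rudelson--Vershynin-type argument: since $\mathcal N_z(x)=0$ for $x<\sigma_N(z)$ while $N\!\int_0^x\!\rho_z=O(N^\varepsilon)$ there, and $|\mathcal N_z(x)-N\!\int_0^x\!\rho_z|\prec N^\varepsilon$ for larger $x$, the logarithmic singularity at $0$ contributes only $\prec N^{-1}$. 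This yields \eqref{eq:pt} for fixed $z$; uniformity over $\mathrm{supp}\,f_{z_0}$ follows from a union bound over an $N^{-K}$-net in $z$ and the deterministic Lipschitz bound $\bigl|\partial_z\log|\det(X_1-z)|\bigr|\leq N/s_{\min}(X_1-z)\prec N^{1+C}$.

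\emph{Main obstacle.} The heart of the matter is Step 2 carried out \emph{uniformly in} $z$ and down to the optimal scale $\eta\sim N^{-1+\varepsilon}$: the stability of the self-consistent equation near the hard edge, where the spectral structure of $H_z$ degenerates as $|z|\to1$ (the spectral gap present for $|z|>1$ closing), and near the soft edges; and, tied to it, the rigidity of the small singular values and the quantitative lower bound on $s_{\min}(X_1-z)$, which is precisely what renders the logarithmic singularity in \eqref{eq:pt} integrable. For a single matrix all of this is established in \cite{BourYauYin,BourYauYin2,Yin}; the present paper runs the same scheme with the resolvent analysis of $Y_z$ replaced by that of the Hermitization of the product $X_1\cdots X_n$.
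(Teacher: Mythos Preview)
This theorem is not proved in the present paper; it is quoted as the main result of \cite{BourYauYin,BourYauYin2,Yin} and serves as background for the paper's own Theorem~2. Your outline is a faithful sketch of the Bourgade--Yau--Yin scheme (Girko hermitization $\to$ local law for $(X_1-z)^*(X_1-z)$ $\to$ smallest-singular-value bound $\to$ control of $\sum\log\lambda_j-\sum\log\gamma_j$), and it is exactly the template the paper reproduces in Section~2 for the product case before specializing the Stieltjes-transform analysis to the linearized block matrix.

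One caveat worth flagging: in your Step~3 you phrase the comparison as $\int_{\eta_0}^T \tfrac1N\tr(H_z+y)^{-1}\,dy$ with real $y>0$, i.e.\ along the negative real axis of the resolvent variable. The local law you state in Step~2, however, is for $w=E+\mathrm i\eta$ with $\eta\geq N^{-1+\varepsilon}$; the paper (following \cite{BourYauYin}) instead reduces \eqref{eq:pt} to the rigidity estimate $|\sum_j\log\lambda_j-\sum_j\log\gamma_j|\prec 1$ via the classical locations $\gamma_j$, and that rigidity is extracted from the Stieltjes-transform concentration on the set $S_{z,\delta,\tilde Q}$ of \eqref{eq:s_set}, i.e.\ for $E$ in the bulk and $\eta$ down to $\varphi^{\tilde Q}(N|m_c|)^{-1}$. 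Your integral representation is an alternative bookkeeping device, but making it rigorous requires the local law either for negative real $w$ or after a contour deformation, which is not quite what Step~2 delivers as written.
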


\begin{thm}
Let $\mu_1,\ldots,\mu_N$ be the eigenvalues of $X_1 X_2 \cdots X_n$.
Then for any $d\in (0,1/2]$, any $\tau>0$ small enough and $z_0$ such that $|z_0|\geq \tau$ and $|1-|z_0||\geq \tau$
  \begin{equation}
    \label{eq:int_1}
    \left( \frac{1}{N}\sum_{j=1}^N f_{z_0}(\mu_j)-\frac{1}{n\pi}\int 1_{|z|<1} f_{z_0}(z) |z|^{\frac{2}{n}-2} dz d\xo{z}\right)
    \prec
    N^{-1+2d}\|\Delta f\|_{L_1}
    ,
  \end{equation}
where $f_{z_0}$ is the $N^{-d}$-rescaling of $f$ around $z_0$.
\end{thm}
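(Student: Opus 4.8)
The plan is to run Girko's Hermitization — in the local form used by Bourgade--Yau--Yin for the single-matrix case — not on $Y:=X_1\cdots X_n$ itself but on its $nN\times nN$ cyclic linearization
\[
  \mathbf{X}
  :=
  \begin{pmatrix}
    0 & X_1 & & \\
      & 0 & X_2 & \\
      & & \ddots & \ddots\\
    X_n & & & 0
  \end{pmatrix}
  .
\]
A Schur-complement computation gives $\det(\mathbf{X}-\zeta I)=\pm\det(X_1\cdots X_n-\zeta^n I)$, so the spectrum of $\mathbf{X}$ consists exactly of the $n$-th roots of the spectrum of $Y$; hence the empirical measure of $\mathbf{X}$ is the image of that of $Y$ under the map sending $\mu$ to its $n$ roots with weight $1/n$ each, and a direct change of variables shows this turns the limiting law \eqref{eq:abs0} into the \emph{uniform} probability measure on the unit disk. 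Moreover $|\det(Y-w)|=|\det(\mathbf{X}-w^{1/n}I)|$ for any branch of the root, so by Girko's formula $\frac1N\sum_j f_{z_0}(\mu_j)=\frac1{4\pi N}\int_{\CC}\Delta f_{z_0}(w)\,\big(\sum_{i=1}^{nN}\log x_i(w^{1/n})\big)\,dw\,d\xo{w}$, where $x_1(\zeta)\le\dots\le x_{nN}(\zeta)$ are the eigenvalues of $(\mathbf{X}-\zeta)^*(\mathbf{X}-\zeta)$. It therefore suffices to control $\frac1{nN}\sum_i\log x_i(\zeta)$ uniformly for $\zeta$ in an $\asymp N^{-d}$-neighbourhood of a fixed $n$-th root $\zeta_0$ of $z_0$. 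For $\tau\le|z_0|\le1-\tau$ this $\zeta_0$ sits in the bulk of the unit disk, bounded away from $0$ and from the unit circle; the case $|z_0|\ge1+\tau$ is immediate, since then all $x_i(\zeta)$ are bounded below.

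To study the $x_i(\zeta)$ I would Hermitize, setting $H_\zeta:=\begin{pmatrix}0 & \mathbf{X}-\zeta\\(\mathbf{X}-\zeta)^* & 0\end{pmatrix}$, a $2nN\times2nN$ Hermitian matrix with independent entries whose variance profile takes only the values $0$ and $N^{-1}$. Its resolvent $G(\xi)=(H_\zeta-\xi)^{-1}$ satisfies, up to a small random fluctuation, a matrix Dyson equation, and the block structure collapses this to a system of $O(n)$ coupled scalar equations for functions $m_a(\zeta,\xi)$. The first, purely deterministic, task is to show this system has a unique solution with positive imaginary part, to identify the self-consistent density of states $\rho_\zeta$, and — decisive for the bootstrap below — to establish its quantitative stability; one expects $0<\rho_\zeta(0)<\infty$ for $|\zeta|<1$ (no hard edge in the bulk) and a spectral gap at $0$ for $|\zeta|>1$, with $\int\log x\,d\nu_\zeta(x)$ reproducing the logarithmic potential of the uniform measure on the disk, which is exactly what feeds \eqref{eq:abs0} back through Girko.

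The probabilistic core is then the local law $\max_{ij}|G_{ij}(\xi)-M_{ij}(\zeta,\xi)|\prec\sqrt{\Im M/(N\eta)}+(N\eta)^{-1}$, valid for $\eta=\Im\xi\gg N^{-1}$ and $|\xi|$ bounded below, proved along the now-standard lines: a self-consistent equation for $G$ with a random error term, a fluctuation-averaging estimate for that term, and the stability from the previous step to close a continuity argument, with the uniform subexponential decay \eqref{eq:int_2} supplying the large-deviation bounds. I expect the \emph{main obstacle} to be the regime $\xi\to0$: controlling the smallest $x_i(\zeta)$, equivalently the least singular value of $X_1\cdots X_n-w$. One needs both an anti-concentration estimate giving $s_{\min}(X_1\cdots X_n-w)\ge N^{-C}$ with overwhelming probability, uniformly in $w$ in the relevant region — here the single-matrix Rudelson--Vershynin / Tao--Vu machinery must be pushed through the product structure, which is the genuinely new difficulty — together with an upper bound of the form $\#\{i:x_i(\zeta)\le N^{-\delta}\}\le N^{1-\delta'}$, obtained from the local law integrated up to the intermediate scale $N^{-\delta}$.

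Finally I would assemble the pieces. Combining the local law with the small-eigenvalue control gives, for each fixed $\zeta$ in the relevant neighbourhood, $\frac1{nN}\sum_i\log x_i(\zeta)=\int\log x\,d\nu_\zeta(x)+(\text{error }\prec N^{-1+2d})$; feeding this into Girko's formula for $\mathbf{X}$, integrating against $\Delta f_{z_0}$ — whose $L^1$ norm is invariant under the rescaling, so $\|\Delta f_{z_0}\|_{L_1}=\|\Delta f\|_{L_1}$ — and changing variables $w=\zeta^n$ transfers the estimate to $Y=X_1\cdots X_n$ and yields \eqref{eq:int_1}. A routine stochastic-continuity and union-bound argument over an $N^{-O(1)}$-net in $\zeta$ promotes the pointwise bound to the uniform one needed to carry out the $w$-integration; the restriction $d\le1/2$ is the natural one, since for $d>1/2$ each ball of radius $N^{-d}$ around $z_0$ contains only $O(1)$ eigenvalues and no law-of-large-numbers statement such as \eqref{eq:int_1} can hold.
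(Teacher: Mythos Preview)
Your overall architecture is correct and matches the paper's: linearize via the cyclic block matrix $\mathbf{X}$, Hermitize, prove a local law for the Stieltjes transform of the singular-value distribution of $\mathbf{X}-\zeta$, deduce rigidity, and feed this into Girko's formula. The change of variables $w=\zeta^n$ and the identification of the limiting density are also handled as you describe.

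However, you misidentify where the genuine work lies. You call the smallest singular value ``the genuinely new difficulty'' and propose pushing Rudelson--Vershynin through the product structure. But the whole point of the linearization is that $\mathbf{X}-\zeta$ has \emph{independent} entries (with a structured zero pattern), so no product-of-matrices least-singular-value technology is needed; the required bound $\Pb[\lambda_1(\zeta)\le N^{-B}]\le CN^{-A}$ is already available (O'Rourke--Soshnikov), and the paper simply quotes it. Your ``equivalently the least singular value of $X_1\cdots X_n-w$'' is misleading here: determinants agree, but singular values of $\mathbf{X}-\zeta$ and of the product are not simply related, and it is the former you need.

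The actual main difficulty --- which you gloss over as ``the block structure collapses this to a system of $O(n)$ coupled scalar equations'' plus ``quantitative stability'' --- is precisely that stability analysis. Because of the block structure one cannot work with a single Stieltjes transform; one must track $2n$ partial traces $\LRI{m}{}{a}{G},\LRI{m}{}{a}{\Gc}$ satisfying a coupled system. The linearized stability matrix $\Gamma$ is a $2n\times 2n$ circulant-block matrix that becomes singular exactly at the edge points $w=\lambda_\pm$, and controlling $\|\Gamma^{-1}\|$ near these points (showing it blows up only like $|w-\lambda_\pm|^{-1/2}$, and that the second-order expansion rescues the bootstrap there) is a substantial computation --- this is the paper's Proposition~\ref{pr:Gamma_inv} and Proposition~\ref{pr:sep}, with a lengthy appendix. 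It is also where the hypothesis $|z_0|\ge\tau$ genuinely enters: near $z=0$ the singularity of $\Gamma^{-1}$ has a different character and the argument breaks down (see the remark after Proposition~\ref{pr:Gamma_inv}). Finally, the case $|z_0|\ge 1+\tau$ is not ``immediate'': it is run through the same rigidity machinery, with $\lambda_-(z)>0$ giving a spectral gap but the local-law bootstrap still required.
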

\begin{rem}
  In the same manner as in \cite{BourYauYin}, \cite{BourYauYin2} and \cite{Yin} we separate the study of the local law in the bulk and at the special points on the edge of the spectrum and at the origin.
In the latter case the analysis of the stability of the self-consistent equations, which is crucial in our approach, cannot be fulfilled, therefore this case requires different tools (for example, ``4th moment comparison''-type results) and is not considered in the present article.
\end{rem}
\begin{rem}
  Recently Ajanki, Erd\"{o}s and Kr\"{u}ger proved that the local law holds up to the optimal scale for a very large class of Hermitian matrices (see \cite{AjanErdoKrug1} and \cite{AjanErdoKrug2}).
Although the model considered in these two articles is very general, it does not contain the matrix $(X-z)^*(X-z)$ studied in the present article, and thus our result cannot be deduced directly from \cite{AjanErdoKrug1} and \cite{AjanErdoKrug2}.
It would be interesting to know how the method of Ajanki, Erd\"{o}s and Kr\"{u}ger can be adjusted in order to obtain the local law for the model considered in the present article.
\end{rem}
\begin{rem}
  Being itself an interesting mathematical problem, the local law on the optimal scale is an important step towards the proof of the universality of the $k$-point correlation functions.
Both known techniques developed to show the local universality (i.e. either using the local relaxation flow or the 4th moment comparison theorem) rely on the initial estimates provided by the local law on the optimal scale.
Therefore, one of the interesting application of the main result of the present article would be proving the universality of the $k$-point correlation functions for the products of non-Hermitian matrices.
\end{rem}

\emph{Outline of the proof.}
We start with the linearization trick, that trasforms the problem about the eigenvalues of the product $X_1\cdots X_n$ into the study of the eigenvalues of a large block matrix $X$ having $X_1,\ldots,X_n$ as blocks.
This will allow us later to exploit the Schur's complement formula to analyse the resolvent matrix.
We show that local law for the product is equivalent to the local circular law for the linerization matrix $X$.
To study the non-Hermitian matrix $X$, we follow Girko's Hermitization techniques, which argues that it is enough to study the distribution of the singular values of the family of shifted matrices $X-z$, $z\in \CC$.
Using the approach developed in \cite{BourYauYin} we show that our initial problem can be reduced to the estimating of the Stieltjes transform of the linearized Hermitized matrix $(X-z)^*(X-z), z\in\CC$.
In Sections~3 and 4 we fix the notation and introduce the tools will be used in the proof of the Stieltjes transform concentration.
The last section is devoted to the study of the Stieltjes transform of the matrix $(X-z)^*(X-z)$.
We adapt the argument of Bourgade-Yau-Yin \cite{BourYauYin} to make it applicable in our setting.
The main difference compared to \cite{BourYauYin} and thus technical difficulty arises from the fact that we cannot work directly with the Stieltjes transform and have to study the concentration for its partial traces.
Similar results but for different values of the resolvent parameter were obtained in \cite{NemiNHPO}.
Although the approach is similar to that used in \cite{NemiNHPO}, many important statements should be adjusted in order to obtain strong enough estimates on a set, which is sufficiently large to imply the rigidity of the singular values of $X-z$.


\section{Reduction to the Stieltjes transform concentration}
\label{sec:}

\emph{Linearisation.}
Following Burda, Janik and Waclaw \cite{BurdJaniWacl} we introduce a block cyclic matrix

\begin{equation}\label{eq:linearisation_matrix}
  X=
  \begin{pmatrix}
    0 & X_1 & 0 &  \cdots & 0 \\
    0 & 0 & X_2 &  \cdots & 0 \\
     & & &\ddots &  \\
    0 & 0 & 0 & \cdots & X_{n-1}\\
    X_n& 0  & 0 &\cdots & 0
  \end{pmatrix}  .
\end{equation}
The $n$-th power of matrix $X$ is an $nN\times nN$ block-diagonal matrix with matrices $X_{a+1}X_{a+2}\cdots X_{a+n},a\in \ZZ/n\ZZ$ on the diagonal. 
The advantage of considering this matrix is that the entries of this matrix are independent with zero mean.
Also, we can rewrite \eqref{eq:int_1} in terms of the eigenvalues of $X$
\begin{EA}{rl}
    \frac{1}{N}\sum_{j=1}^N f_{z_0}(\mu_j)-\frac{1}{n\pi}\int_{|z|<1}f_{z_0}(z) &|z|^{\frac{2}{n}-2}dz d\xo{z}
    \\\EAy \label{eq:red_1}
    &=
    \frac{1}{nN}\sum_{j=1}^{nN} f_{z_0}(\mu^n_j(X))-\frac{1}{\pi}\int_{|z|<1}f_{z_0}(z^n) dz d\xo{z}
    ,
 \end{EA}
where we used a change of variable for the last term.
Below we show that the stochastic domination of \eqref{eq:red_1} by $N^{-1+2d}\|\Delta f\|_{L_1}$ is equivalent to the local circular law for the matrix $X$.
But before that we use Girko's hermitization idea to transform the study of the non-Hermitian matrix $X$ into the study of a family of Hermitian matrices, defined in the next section.

\emph{Hermitization.}
Girko's Hermitization technique relies on the Green's formula for a function with compact support.
\begin{lem}[Green's formula, \cite{SaffToti}]
  Let $f,g:\CC\rightarrow \RR$ be twice continuously differentiable functions and let $R\subset \CC$ be a bounded set with $C^{1}$ boundary $\partial R$.
Let $A(z)$ be Lebesgue measure on $\CC$.
Then
\begin{equation}
  \int_{R} f\Delta g dA(z)-  \int_{R} \Delta f g dA(z)
  =
  -\int_{\partial R}(f \frac{\partial g}{\partial \mathbf{n}}-g\frac{\partial f}{\partial \mathbf{n}})\, ds
\end{equation}
where $\partial/\partial \mathbf{n}$ denotes differentiation in the direction of the inner normal of $R$, and $ds$ indicates integration with respect ot the arc length of $\partial R$.

If we suppose that $f$ has compact support, fix $\tilde{z}\in \CC$ and take $g=-\log |z-\tilde{z}|$ then
\begin{equation}\label{eq:red_0}
    f(\tilde{z})
    =
    \int_{\C} \Delta f(z) \frac{1}{2\pi} \log|z-\tilde{z}|\, dA(z)  
\end{equation}
\end{lem}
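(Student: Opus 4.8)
\emph{Proof strategy.} The first identity is Green's second identity, and I would simply deduce it from the divergence theorem (or quote it from \cite{SaffToti}): applying the divergence theorem to the vector field $f\nabla g$ on $R$ — and recalling that here $\mathbf{n}$ is the \emph{inner} normal, which reverses the usual sign — gives $\int_{\partial R} f\,\frac{\partial g}{\partial\mathbf{n}}\, ds = -\int_R\bigl(f\Delta g + \nabla f\cdot\nabla g\bigr)\, dA(z)$; subtracting the same identity with $f$ and $g$ interchanged makes the symmetric term $\nabla f\cdot\nabla g$ cancel and leaves exactly the stated formula.

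For the representation \eqref{eq:red_0} the plan is to specialise the first identity to $g(z)=-\log|z-\tilde z|$, which is harmonic on $\CC\setminus\{\tilde z\}$ and is, up to the sign, the fundamental solution of the Laplacian in dimension two. Since $g$ is singular at $\tilde z$ one cannot apply the identity directly on a neighbourhood of $\mathrm{supp}\, f$, so I would excise a small disk: fix $\rho$ large enough that $\overline{B_\rho(0)}$ contains both $\mathrm{supp}\, f$ and $\tilde z$, and for small $\varepsilon>0$ work on the punctured domain $R_\varepsilon=B_\rho(0)\setminus\overline{B_\varepsilon(\tilde z)}$. On $R_\varepsilon$ one has $\Delta g\equiv 0$, so the first identity collapses to
\begin{equation}
  \int_{R_\varepsilon}\Delta f(z)\, g(z)\, dA(z)
  =
  \int_{\partial R_\varepsilon}\Bigl(f\,\frac{\partial g}{\partial\mathbf{n}}-g\,\frac{\partial f}{\partial\mathbf{n}}\Bigr)\, ds .
\end{equation}

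Then I would let $\varepsilon\to 0$ and identify the three contributions. The outer circle $\partial B_\rho(0)$ drops out since $f$ and $\nabla f$ vanish near it. On the inner circle $\partial B_\varepsilon(\tilde z)$, parametrised by $z=\tilde z+\varepsilon e^{i\phi}$, the inner normal of $R_\varepsilon$ points radially \emph{away} from $\tilde z$, so $\frac{\partial g}{\partial\mathbf{n}}=-1/\varepsilon$; hence, using $ds=\varepsilon\, d\phi$, the first boundary term equals $-\int_0^{2\pi}f(\tilde z+\varepsilon e^{i\phi})\, d\phi\to -2\pi f(\tilde z)$ by continuity of $f$, while the second is bounded by $2\pi\varepsilon|\log\varepsilon|\,\|\nabla f\|_\infty\to 0$. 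Finally the volume integral converges to $\int_{\CC}\Delta f(z)\bigl(-\log|z-\tilde z|\bigr)\, dA(z)$ by dominated convergence, using local integrability of $\log|\cdot-\tilde z|$ on $\CC$ together with boundedness and compact support of $\Delta f$. Passing to the limit and rearranging gives $f(\tilde z)=\frac{1}{2\pi}\int_{\CC}\Delta f(z)\log|z-\tilde z|\, dA(z)$, which is \eqref{eq:red_0}.

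None of these steps is deep; the one place I would be careful is the sign accounting, which must be handled correctly in two places at once to land on the $+$ sign in \eqref{eq:red_0}: first, the inner-normal convention in the statement reverses a sign relative to the textbook divergence theorem, and second, because $\partial B_\varepsilon(\tilde z)$ enters as the \emph{inner} boundary of the punctured region $R_\varepsilon$, the inner normal of $R_\varepsilon$ on that circle points away from $\tilde z$ rather than toward it. I would also note that, since we never pass to an identity on an unbounded domain, no decay or moment hypothesis on $f$ beyond compact support and $C^2$-smoothness is needed.
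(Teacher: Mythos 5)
Your argument is correct, and it is the standard (essentially canonical) derivation: Green's second identity from the divergence theorem, followed by excising a small disk around the logarithmic singularity and tracking the two sign reversals coming from the inner-normal convention and from the inner boundary circle. The paper itself gives no proof — it simply cites \cite{SaffToti} — so your write-up supplies exactly the argument the reference would give; the sign bookkeeping and the limiting boundary/volume contributions are all handled correctly.
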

Let $\tilde{\mu}_1,\ldots,\tilde{\mu}_{nN}$ denote the eigenvalues of $X$.
Then using \eqref{eq:red_0} we obtain \emph{Girko's hermitization formula}
  \begin{EA}{ll}
    \frac{1}{nN}\sum_{j=1}^{nN} f_{z_0}(\tilde{\mu}^n)
    &=
    \frac{1}{nN}\sum_{j=1}^{nN} \frac{1}{2\pi}\int \Delta \tilde{f}(z)\log |\tilde{\mu}_j -z|dz d\xo{z} 
    =
    \frac{1}{2\pi nN} \int \Delta \tilde{f}(z) \log| \det(X-z)|dz d\xo{z} 
    \\
    &=
    \frac{1}{4\pi nN} \int \Delta \tilde{f}(z) \log| \det(X-z)^* (X-z)|dz d\xo{z} 
    ,
  \end{EA}
where $\tilde{f}(z)=f_{z_0}(z^n)$.
Define $Y_z:=X-z$ and let $\lambda_1<\lambda_2<\ldots<\lambda_{nN}$ be the eigenvalues of $Y_z^*Y_z$.
Then
\begin{equation}
  \frac{1}{nN}\sum_{j=1}^{nN} f_{z_0}(\tilde{\mu}^n_j)
  =
  \frac{1}{4\pi nN} \int \Delta \tilde{f}(z) \sum_{j=1}^{nN}\log \lambda_j(z) dz d\xo{z} 
  .
\end{equation}
We now show how the estimates of \eqref{eq:red_1} can be obtained by studying the eigenvalues $\lambda_j(z)$.

Let $\nu_z$ be a family of the empirical measures on the squared singular values of the matrix $X-z$
\begin{equation}
  \nu_{z,N}(A)
  =
  \frac{1}{nN}\sum_{j=1}^{nN} 1_A(\lambda_j)
  ,\quad
  A\in \Bc(\RR)
  ,
\end{equation}
and let  $m(z,w)$ be the Stieltjes transform of $\nu_{z,N}$
\begin{equation}
  m(z,w)
  =
  \int_{\RR}\frac{1}{x-w}d\nu_{z,N}(x)
  =
  \sum_{j=1}^{nN} \frac{1}{\lambda_j(z)-w}
  .
\end{equation}
The convergence of $m(z,w)$ to a limiting function $m_c(z,w)$, as well as the weak convergence of $\nu_{z,N}$ was shown in \cite[Proposition~1]{NemiNHPO}. Together with \cite[Lemma~11.9]{BaiSilv}, where the authors studies properties of the function $m_c(z,w)$, we have the following result.
\begin{thm}\label{thm:sosh_orou_product}
  There exist a deterministic function $m_c:\CC\times\CC\rightarrow \CC$ and a family of  deterministic measures $\{\nu_z,\, z\in\CC\}$ on $\RR_+$ such that
  \begin{itemize}
  \item[(1)] Almost surely, $m(z,w)$ converges to $m_c(z,w)$ as $N\rightarrow \infty$,
  \item[(2)] Almost surely, $\nu_{z,N}$ converges weakly to $\nu_z$ uniformly in every bounded region of $z$.
  \item[(3)] $\nu_z$ are absolutely continuous measures with density functions $\rho_z$ supported on $(\max \{0,\lambda_-(z)\}, \lambda_+(z))$, where
    \begin{equation}\label{eq:a_and_lambda}
    \lambda_{\pm}(z)=\frac{(\afr\pm 3)^3}{8(\afr\pm 1)}    
    ,\quad 
    \afr:=\sqrt{1+8|z|^2}
    .
    \end{equation}
  \item[(4)] $m_c(z,w)$ is the Stieltjes transform of the measure $\nu_z$, i.e.
    \begin{equation}
      m_c(z,w)
      =
      \int_{\RR} \frac{\rho_z(x)}{x-w} dx
    \end{equation}
  \item[(5)] $m_c(z,w)$ is the solution of the equation
  \begin{equation}\label{eq:mc_equation}
    m_c^{-1}=-w(1+m_c)+|z|^2(1+m_c)^{-1}
  \end{equation}
  that satisfies $\Im m_c(z,w)>0$ if $\Im w>0$.
  \end{itemize}
\end{thm}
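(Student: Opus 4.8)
The plan is to assemble the five assertions from the two cited inputs together with one self-contained computation of the spectral edges. Assertions~(1) and~(2) — almost sure convergence of $m(z,w)$ to a deterministic $m_c(z,w)$ and weak convergence of $\nu_{z,N}$ to a deterministic $\nu_z$, locally uniformly in $z$ — are exactly \cite[Proposition~1]{NemiNHPO}: one writes the Schur complement identities for the resolvent $(Y_z^*Y_z-w)^{-1}$, exploits the cyclic block structure of $X$ to close a self-consistent system for the partial traces of this resolvent, and concludes by a stability plus concentration argument. It then remains to identify $m_c$ and $\nu_z$ explicitly.

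For~(5), taking $N\to\infty$ in those Schur complement equations and using the cyclic symmetry of the blocks of $X$ collapses the limiting self-consistent system to the single scalar relation \eqref{eq:mc_equation}; clearing denominators turns it into the cubic $(1+m_c)=-w\,m_c(1+m_c)^2+|z|^2m_c$. The branch constraint $\Im m_c(z,w)>0$ for $\Im w>0$ is inherited from the fact that each $m(z,w)$, being the Stieltjes transform of the probability measure $\nu_{z,N}$ on $\RR_+$, maps $\{\Im w>0\}$ into itself; this property passes to the limit and singles out the relevant solution of the cubic. Assertion~(4) is then immediate from~(1) and~(2): for $w\notin\RR_+$ the map $x\mapsto(x-w)^{-1}$ is bounded and continuous on $\RR_+$, so $\nu_{z,N}\Rightarrow\nu_z$ gives $m(z,w)\to\int_{\RR}(x-w)^{-1}\,d\nu_z(x)$, while~(1) gives $m(z,w)\to m_c(z,w)$, and the two limits coincide.

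For~(3), invert \eqref{eq:mc_equation} to express $w$ as a rational function of $m=m_c$,
\begin{equation*}
  w=\frac{(|z|^2-1)\,m-1}{m(1+m)^2},
\end{equation*}
and recover the density through $\rho_z(x)=\pi^{-1}\lim_{\eta\downarrow0}\Im m_c(z,x+i\eta)$. Away from the finitely many branch points of this algebraic function the boundary value exists and is analytic and bounded, which gives absolute continuity and boundedness of $\rho_z$; the general structural fact that the limiting measure of a Marchenko--Pastur-type equation is carried by the interval cut out by consecutive branch points is \cite[Lemma~11.9]{BaiSilv}. The edges are the real critical values of $m\mapsto w(m)$: solving $dw/dm=0$ produces the cubic $(2|z|^2-2)m^3-(5-2|z|^2)m^2-4m-1=0$, and substituting its relevant real roots back into $w(m)$ and simplifying with the abbreviation $\afr=\sqrt{1+8|z|^2}$ yields the closed forms $\lambda_\pm(z)=(\afr\pm3)^3/\bigl(8(\afr\pm1)\bigr)$ of \eqref{eq:a_and_lambda}. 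One checks separately that $\lambda_-(z)<0$ for $0<|z|<1$ (where the lower edge degenerates to a hard edge at $0$), consistently with the $\max\{0,\lambda_-(z)\}$ in the statement.

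The mechanical parts are the reduction to \eqref{eq:mc_equation}, which is bookkeeping from the linearization and Schur's formula, and assertion~(4), a soft limiting argument. The step demanding genuine care — and where I expect the main obstacle and the only real risk of computational error — is the last one: performing the critical-point analysis of $w(m)$, determining which of the three critical points are real and lie in the relevant range, handling the transition at $|z|=1$ where the lower edge changes character, and matching the resulting radicals to the compact expression \eqref{eq:a_and_lambda}.
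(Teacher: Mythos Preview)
Your proposal is correct and aligns with the paper's treatment: the paper does not prove this theorem at all but simply cites \cite[Proposition~1]{NemiNHPO} for items~(1)--(2) and \cite[Lemma~11.9]{BaiSilv} for the properties of $m_c$ underlying items~(3)--(5), exactly the two inputs you invoke. Your write-up goes further than the paper by sketching the content of those references and outlining the edge computation, which is fine as added detail but not required.
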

Next lemma shows how we can use the properties of $\rho_z$ to reduce \eqref{eq:int_1} to the problem of the rigidity of the singular values $\lambda_j(z)$ around their classical locations.
\begin{lem}(See \cite[Section~5]{BourYauYin})
Let $\gamma_j(z), 1\leq j \leq nN$ be defined by
\begin{equation}
    \int_0^{\gamma_j(z)}\rho_z(x)dx = \frac{j}{nN}
\end{equation}
be classical locations of the eigenvalues of $Y_z^*Y_z$.
Then for any $\varepsilon>0$
\begin{equation}\label{eq:red_2}
  |\sum_{j} \log \gamma_j(z) - nN\int_0^{\infty} (\log x)\rho_z(x)dx|\leq N^{\varepsilon}  
\end{equation}
and
\begin{equation}
  \label{eq:red_3}
  \int_0^{\infty} (\log x)\Delta_z\rho_z(x) dx
  =
  4\cdot 1_{|z|< 1}(z)
.
\end{equation}
\end{lem}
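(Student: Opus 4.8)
The plan is to prove the two assertions separately. For \eqref{eq:red_3} the idea is to identify the function $L(z):=\int_0^\infty(\log x)\,\rho_z(x)\,dx$ in closed form and then differentiate it twice in $z$, deferring to the end the justification that $\Delta_z$ may be moved inside the $x$-integral. There are two natural routes to the closed form. The first is potential-theoretic: the eigenvalues of $X$ are the $n$th roots of those of $X_1\cdots X_n$, whose ESD converges to \eqref{eq:abs0} by \cite{GotzTikh}, \cite{OrouSosh}, so the limiting spectral measure of $X$ is the uniform measure on the unit disk; combining this with the identity $\frac{1}{nN}\sum_{j}\log\lambda_j(z)=\frac{2}{nN}\log|\det(X-z)|=\frac{2}{nN}\sum_{k}\log|\tilde\mu_k-z|$, part~(2) of Theorem~\ref{thm:sosh_orou_product}, and a crude polynomial lower bound on the smallest singular value of $X-z$ (needed only to keep $\log\lambda_1$ under control), one obtains
\begin{equation*}
  L(z)=\frac{2}{\pi}\int_{|\zeta|<1}\log|z-\zeta|\,dA(\zeta)=\begin{cases}|z|^2-1 & |z|\le 1,\\ \log|z|^2 & |z|\ge 1.\end{cases}
\end{equation*}
The second route avoids the global law: by \eqref{eq:mc_equation}, $m_c(z,\cdot)$ is algebraic of degree $3$, and integrating $-m_c(z,w)$ in $w$, fixing the constant from the edge locations \eqref{eq:a_and_lambda}, recovers the same $L$.

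With the closed form in hand, $\Delta_z(|z|^2-1)=4$ on $\{|z|<1\}$ and $\Delta_z\log|z|^2=0$ on $\{|z|>1\}$, and since both $L$ and $\nabla L$ match across $\{|z|=1\}$ there is no singular part on the circle; hence $\Delta_zL=4\cdot 1_{|z|<1}$ as an $L^1_{\mathrm{loc}}$ function. It then remains to verify $\Delta_zL(z)=\int_0^\infty(\log x)\,\Delta_z\rho_z(x)\,dx$ for $|z|\ne 1$, which is differentiation under the integral sign. Since $\rho_z$ depends on $z$ only through $|z|^2$ and, by the algebraicity of \eqref{eq:mc_equation}, is real-analytic in $|z|^2$ on the interior of its support away from $|z|^2=1$, the only delicate ranges are $x\to 0$ and $x\to\lambda_+(z)$. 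From the analysis of the cubic \eqref{eq:mc_equation} (in the spirit of \cite{BaiSilv}, \cite{NemiNHPO}, \cite{BourYauYin}) one extracts that $\rho_z$ vanishes like a square root at $\lambda_+(z)$; that for $|z|>1$ it is supported in $[\lambda_-(z),\lambda_+(z)]$ with $\lambda_-(z)>0$; and that for $|z|<1$ it is of order $x^{-1/2}$ as $x\to 0$. Consequently $\partial_{|z|^2}\rho_z$ and $\partial_{|z|^2}^2\rho_z$ are $O(x^{-1/2})$ near $0$ and $O((\lambda_+(z)-x)^{-1/2})$ near $\lambda_+(z)$, both integrable against $|\log x|$ uniformly on compact subsets of $\{|z|\ne 1\}$. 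This gives \eqref{eq:red_3}.

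For \eqref{eq:red_2} the plan is a quadrature estimate. Let $F_z$ be the distribution function of $\nu_z$ and $\gamma:=F_z^{-1}\colon[0,1]\to[\max\{0,\lambda_-(z)\},\lambda_+(z)]$, so $\gamma_j(z)=\gamma(j/(nN))$ and $nN\int_0^\infty(\log x)\rho_z(x)\,dx=nN\int_0^1\log\gamma(s)\,ds$; then
\begin{equation*}
  \sum_{j=1}^{nN}\log\gamma_j(z)-nN\int_0^\infty(\log x)\rho_z(x)\,dx=\sum_{j=1}^{nN}nN\int_{(j-1)/(nN)}^{j/(nN)}\bigl(\log\gamma(j/(nN))-\log\gamma(s)\bigr)\,ds.
\end{equation*}
Because $\gamma$ is nondecreasing, each summand lies in $[0,\log\gamma(j/(nN))-\log\gamma((j-1)/(nN))]$; summing the upper bounds over $j\ge 2$ telescopes to $\log\lambda_+(z)-\log\gamma_1(z)$, and the $j=1$ term is controlled using the left-edge behaviour of $\rho_z$ (for $|z|\ge 1$ it is $O(1)$ since $\gamma_1\ge\lambda_-(z)>0$; for $|z|<1$ the bound $c\sqrt x\le F_z(x)\le C\sqrt x$ near $0$ gives $\gamma(s)\ge c's^2$, hence $\gamma_1(z)$ of order $(nN)^{-2}$ and the $j=1$ term $O(\log N)$). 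Since $\log\lambda_+(z)=O(1)$ and $-\log\gamma_1(z)=O(\log N)$ uniformly over the range of $z$ occurring in Girko's formula, the difference is nonnegative and $O(\log N)$, hence at most $N^\varepsilon$ once $N$ is large, which is \eqref{eq:red_2}.

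The step I expect to be the main obstacle is neither the quadrature nor the Laplacian computation, but the uniform edge control of $\rho_z$, and of its derivatives in $|z|^2$, that feeds both parts: the $x^{-1/2}$ hard edge at $0$ when $|z|<1$, the square-root soft edges, and real-analyticity in $|z|^2$ away from $|z|^2=1$. This quantitative information about the solution of \eqref{eq:mc_equation} has to be read off from the cubic equation together with the results of \cite{BaiSilv}; once it is available the remaining steps are routine.
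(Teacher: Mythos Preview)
The paper does not prove this lemma itself; it defers entirely to \cite[Section~5]{BourYauYin}. Your proposal therefore cannot be compared line by line with the paper, but it can be assessed against what the paper actually \emph{uses}.

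Your quadrature argument for \eqref{eq:red_2} is correct and is exactly the standard one: the telescoping bound together with $\gamma_1(z)\gtrsim (nN)^{-2}$ at the hard edge (for $|z|<1$) and $\gamma_1(z)\ge\lambda_-(z)>0$ (for $|z|>1$) gives an $O(\log N)$ bound, which is $\le N^{\varepsilon}$ for $N$ large.

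For \eqref{eq:red_3} your identification of $L(z)=\int_0^\infty(\log x)\rho_z(x)\,dx$ with the logarithmic potential of the circular law, and hence $\Delta_zL=4\cdot 1_{|z|<1}$, is correct and is precisely what the paper needs: the step after the lemma labelled ``integration by parts'' is nothing but $\int\Delta\tilde f\cdot L=\int\tilde f\cdot\Delta_zL$. However, your final justification of the pointwise interchange $\Delta_zL(z)=\int_0^\infty(\log x)\,\Delta_z\rho_z(x)\,dx$ contains an error. Near the soft edge $\rho_z(x)\sim c(z)(\lambda_+(z)-x)^{1/2}$, so differentiating twice in $r=|z|^2$ at fixed $x$ produces a leading term $-\tfrac{c}{4}(\partial_r\lambda_+)^2(\lambda_+-x)^{-3/2}$, not $(\lambda_+-x)^{-1/2}$ as you claim; this is not integrable, and in fact the classical integral $\int_0^{\lambda_+}(\log x)\,\partial_r^2\rho_z(x)\,dx$ diverges. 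The identity \eqref{eq:red_3} is therefore only meaningful in the distributional sense in $z$ (which is how it is applied), or after a change of variables such as $x=\lambda_+(z)t$ that freezes the moving endpoint before differentiating. Either drop the interchange step and state the conclusion directly as $\Delta_zL=4\cdot 1_{|z|<1}$, or carry out the change of variables; the dominated-convergence argument you sketch does not work as written at the soft edge.
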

Suppose that 
\begin{equation}\label{eq:red_4}
  |\sum_{j} \log \lambda_j(z) -\sum_{j} \log \gamma_j(z)|
  \leq 
  N^{\varepsilon}
\end{equation}
for any $\varepsilon>0$.
Denote $\tilde{f}(z):=f_{z_0}(z^n)$.
Then
\begin{EA}{ll}
  \frac{1}{nN}\sum_{j=1}^{nN} f_{z_0}(\tilde{\mu}^n)
  &\,=
  \frac{1}{4\pi nN} \int \Delta \tilde{f}(z)\sum_{j=1}^{nN}\log \lambda_j(z) dz d\xo{z} 
  \\
  &\,=
  \frac{1}{4\pi nN} \int \Delta \tilde{f}(z)\sum_{j=1}^{nN}\log \gamma_j(z) dz d\xo{z} +\OO{N^{-1+\varepsilon}\|\Delta \tilde{f}\|_{L_1}}
  \\
  &\,=
  \frac{1}{4\pi} \int \Delta \tilde{f}(z)\int_0^{\infty} (\log x)\rho_{z}(x)dx dz d\xo{z} +\OO{N^{-1+\varepsilon}\|\Delta \tilde{f}\|_{L_1}}
  \\
  &\,=
  \frac{1}{4\pi} \int f_{z_0}(z^n)\int_0^{\infty} (\log x)\Delta\rho_z(x)dx dz d\xo{z} +\OO{N^{-1+\varepsilon}\|\Delta \tilde{f}\|_{L_1}}
  \\
  &\,=
  \frac{1}{\pi} \int f_{z_0}(z^n) 1_{z<1}(z) dz d\xo{z} +\OO{N^{-1+\varepsilon}\|\Delta \tilde{f}\|_{L_1}}
  ,
\end{EA}
where in the first two steps we used \eqref{eq:red_4} and \eqref{eq:red_2}, next we applied integration by parts and finaly we used \eqref{eq:red_3}.
From the properties of Wirtinger derivatives we have the following lemma
\begin{lem}\label{lem:chane_of_variable}
  If $\frac{\partial}{\partial \xo{z}} g=0$, then
  \begin{equation}
    \Delta (f\circ g)
    =
    (\Delta f)\circ g \cdot \frac{\partial g}{\partial z}\frac{\partial \xo{g}}{\partial \xo{z}}
    .
  \end{equation}
\end{lem}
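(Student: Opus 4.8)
The plan is to write the Laplacian through Wirtinger derivatives, $\Delta=4\,\partial_z\partial_{\xo{z}}$ with $\partial_z=\tfrac12(\partial_x-i\partial_y)$ and $\partial_{\xo{z}}=\tfrac12(\partial_x+i\partial_y)$, and then apply the chain rule twice, using at each stage that $g$ is holomorphic (which is precisely the hypothesis $\partial_{\xo{z}}g=0$, and which also yields $\partial_z\xo{g}=\overline{\partial_{\xo{z}}g}=0$ and $\Delta g=\Delta\xo{g}=0$) to discard the unwanted terms. Writing $w$ for the variable of $f$, so that $\partial_w$ and $\partial_{\xo{w}}$ act on $f$ before composition, the chain rule reads
\[
  \partial_z(f\circ g)=\bigl((\partial_w f)\circ g\bigr)\,\partial_z g+\bigl((\partial_{\xo{w}}f)\circ g\bigr)\,\partial_z\xo{g},
\]
and likewise with $\partial_z$ replaced by $\partial_{\xo{z}}$ throughout.

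First I would compute $\partial_{\xo{z}}(f\circ g)$: since $\partial_{\xo{z}}g=0$, the first term drops and only $\partial_{\xo{z}}(f\circ g)=\bigl((\partial_{\xo{w}}f)\circ g\bigr)\,\partial_{\xo{z}}\xo{g}$ survives. Next I would apply $\partial_z$ and the Leibniz rule, getting two contributions. In the one where $\partial_z$ falls on $\partial_{\xo{z}}\xo{g}$, the factor is $\partial_z\partial_{\xo{z}}\xo{g}=\tfrac14\Delta\xo{g}=0$ because $g$, being holomorphic, is harmonic; this contribution vanishes. In the other, $\partial_z$ falls on $(\partial_{\xo{w}}f)\circ g$, and a second application of the chain rule gives $\bigl((\partial_w\partial_{\xo{w}}f)\circ g\bigr)\partial_z g+\bigl((\partial_{\xo{w}}^2 f)\circ g\bigr)\partial_z\xo{g}$, where again $\partial_z\xo{g}=0$ kills the second piece.

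Assembling the one surviving term and multiplying by $4$ yields
\[
  \Delta(f\circ g)=4\,\bigl((\partial_w\partial_{\xo{w}}f)\circ g\bigr)\,\partial_z g\,\partial_{\xo{z}}\xo{g}=\bigl((\Delta f)\circ g\bigr)\,\frac{\partial g}{\partial z}\,\frac{\partial\xo{g}}{\partial\xo{z}},
\]
using $\Delta f=4\,\partial_w\partial_{\xo{w}}f$ in the last step, which is the claimed identity. The argument is a routine exercise in Wirtinger calculus and there is no genuine obstacle; the only point requiring care is bookkeeping — tracking which derivative hits which factor — and invoking the three consequences of holomorphy ($\partial_{\xo{z}}g=0$, $\partial_z\xo{g}=0$, $\Delta\xo{g}=0$) at the appropriate places.
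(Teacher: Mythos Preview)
Your argument is correct and is exactly the routine Wirtinger-calculus computation the paper has in mind; the paper itself does not spell out a proof, merely noting that the identity follows ``from the properties of Wirtinger derivatives.'' Your write-up supplies those details cleanly and there is nothing to add.
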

Using the change of variable $\xi= N^{d}(z^n-z_0)$ and the above lemma we get that 
\begin{equation*}
  N^{-1+\varepsilon}\|\Delta \tilde{f}\|_{L_1}
  =
  N^{-1+2d+\varepsilon}\|\Delta f\|_{L_1}
  .
\end{equation*}
Therefore, to prove \eqref{eq:int_1} it is enough to show that
\begin{equation}\label{eq:red_5}
  |\sum_{j} \log \lambda_j(z) -\sum_{j} \log \gamma_j(z)|
  \prec
  1
  .
\end{equation}
In order to obtain the estimate \eqref{eq:red_5}, we proceed as in \cite{BourYauYin}, where the similar estimate was obtained for a matrix with iid entries.
Firstly, we separate a relatively small number of the largest terms, that will be estimated by controlling the smallest singular values $\lambda_1(z)$ and the properties of $\rho_z$.
We shall need the following result proven in \cite{OrouSosh}.
\begin{thm}
For any $A>0$ there exists $B>0$ such that
\begin{equation}
    \Pb[\lambda_1(z)\leq N^{-B}]\leq C N^{-A}
\end{equation}
uniformly in $z$.
\end{thm}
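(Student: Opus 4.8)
Write $s_{\min}(M)$ for the smallest singular value of $M$, so that $\lambda_1(z)=s_{\min}(X-z)^2$ and the statement is equivalent to: for every $A>0$ there is $B>0$ with $\Pb[s_{\min}(X-z)\le N^{-B}]\le CN^{-A}$, uniformly in $z$. Fix $A$. The plan is to use the block-cyclic structure of $X$ to reduce, after conditioning on all but one of the factors $X_a$, to the least singular value bound for a single random matrix with an arbitrary \emph{deterministic} additive shift — a classical estimate (Rudelson--Vershynin type; see \cite{TaoVuKris}, \cite{OrouSosh} and the references therein) that needs only the non-degeneracy of the entries and the uniform tail control \eqref{eq:int_2}. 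I first dispose of two easy ranges of $z$. A standard operator-norm bound (valid under \eqref{eq:int_2}) provides an event $\Omega_0$ of probability $\ge 1-CN^{-A}$ on which $\|X_a\|\le C_0$ for all $a$; since $X^*X$ is block-diagonal with diagonal blocks $X_a^*X_a$, on $\Omega_0$ we also have $s_{\min}(X)=\min_a s_{\min}(X_a)$ and $\|X\|\le C_0$. Thus for $|z|\ge 2C_0$, $s_{\min}(X-z)\ge|z|-\|X\|\ge C_0$ and there is nothing to prove. The single-matrix bound gives $B_0=B_0(A)$ with $\Pb[s_{\min}(X_a)\le N^{-B_0}]\le N^{-A-1}$ for each $a$; intersecting with $\Omega_0$ we obtain an event of probability $\ge 1-CN^{-A}$ on which $s_{\min}(X)\ge N^{-B_0}$, so that for $|z|\le\tfrac12 N^{-B_0}$ we have $s_{\min}(X-z)\ge s_{\min}(X)-|z|\ge\tfrac12 N^{-B_0}$, which again suffices.

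It remains to treat $\tfrac12 N^{-B_0}\le|z|\le 2C_0$, where $z\ne 0$. Block Gaussian elimination on $(X-z)v=b$ — solving the first $n-1$ block-rows successively for $v_1,\dots,v_{n-1}$ in terms of $v_n$ and substituting into the last block-row — shows that $X-z$ is invertible whenever $\Pi-z^n$ is, with $\Pi:=X_nX_1X_2\cdots X_{n-1}$, and yields on $\Omega_0$
\begin{equation*}
  \|(X-z)^{-1}\|\ \le\ N^{C_1}\bigl(1+\|(\Pi-z^n)^{-1}\|\bigr),
\end{equation*}
every prefactor being a power of some $\|X_a\|\le C_0$ and of $|z|^{-1}\le 2N^{B_0}$, so $C_1=C_1(n,B_0,C_0)$. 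Hence it is enough to bound $s_{\min}(\Pi-w)$ from below for $w=z^n$. I would now condition on $X_1,\dots,X_{n-1}$ and intersect with $\{\,s_{\min}(X_a)\ge N^{-B_0},\ \|X_a\|\le C_0:\ 1\le a\le n-1\,\}$; on this event $Q:=X_1\cdots X_{n-1}$ is invertible with $s_{\min}(Q)\ge N^{-(n-1)B_0}$ and $\|Q\|\le C_0^{n-1}$, and with the deterministic (once $X_1,\dots,X_{n-1}$ are frozen) matrix $D:=-wQ^{-1}$ one has $\Pi-w=X_nQ-w=(X_n+D)Q$, so that
\begin{equation*}
  s_{\min}(\Pi-w)\ \ge\ s_{\min}(X_n+D)\,s_{\min}(Q)\ \ge\ N^{-(n-1)B_0}\,s_{\min}(X_n+D).
\end{equation*}
Since the least singular value bound for $X_n$ is uniform over deterministic shifts, there is $B_2=B_2(A)$ with $\Pb[\,s_{\min}(X_n+D)\le N^{-B_2}\mid X_1,\dots,X_{n-1}\,]\le N^{-A-1}$, uniformly in $D$ and therefore in $w$ and in $z$. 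Combining the last two displays, with probability $\ge 1-CN^{-A}$ we get $s_{\min}(X-z)\ge N^{-C_1}\bigl(1+N^{B_2+(n-1)B_0}\bigr)^{-1}\ge N^{-B}$ for a suitable $B$, uniformly over $\tfrac12 N^{-B_0}\le|z|\le 2C_0$. Together with the two easy ranges this proves the theorem.

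The main obstacle is the uniformity in $z$. One cannot apply the single-matrix estimate to $X-z$ directly, because only the off-diagonal blocks of the block-cyclic matrix $X$ carry randomness, and the crude perturbation bound $s_{\min}(X-z)\ge s_{\min}(X)-|z|$ becomes useless as soon as $|z|$ exceeds the a priori merely polynomially small quantity $s_{\min}(X)$. The device that makes it work is to peel off the last factor $X_n$ by conditioning, so that what remains is a genuine random matrix with independent entries plus a deterministic shift — a situation in which the classical least singular value bound applies with constants independent of the shift. Everything else (the block-elimination identity, the harmless polynomial factors in $\|X_a\|$ and $|z|^{-1}$, and the bookkeeping of the three ranges of $z$) is routine.
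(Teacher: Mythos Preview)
Your argument is correct. The paper does not give its own proof of this statement; it simply imports the result from \cite{OrouSosh}. Your strategy --- block Gaussian elimination on the cyclic matrix to pass from $X-z$ to $\Pi-z^n$, then peeling off one factor $X_n$ by conditioning on the remaining $X_1,\dots,X_{n-1}$ and invoking the single-matrix least singular value bound with a deterministic shift --- is exactly the route taken in \cite{OrouSosh} (see their Section~4), so there is no substantive difference to discuss.

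Two small points worth flagging. First, the Tao--Vu/Rudelson--Vershynin bound you call on is most commonly stated for i.i.d.\ entries, while here each $X_a$ has merely independent entries with common variance and uniform subexponential tails; the extension goes through without new ideas but deserves a precise citation. Second, that bound is uniform over the deterministic shift $D$ only when $\|D\|\le N^{O(1)}$; you do guarantee this on the conditioning event (via $\|Q^{-1}\|\le N^{(n-1)B_0}$ and $|w|\le(2C_0)^n$), but it is worth making the dependence of $B_2$ on this polynomial degree explicit.
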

From the properties of $\rho_z$ (see \cite{BourYauYin}, Proposition~3.1) we have that $\gamma_1 \geq C N^{-2}$.
Therefore we conclude that 
\begin{equation}
  |\lambda_1(z)|+|\gamma_1(z)|
  \prec
  1
  .
\end{equation}
Define $\varphi:=(\log N)^{\log \log N}$.
Note that $\varphi$ is asymptotically smaller than $N^{\varepsilon}$ for any $\varepsilon>0$.
Then
\begin{equation}
  |\sum_{j\leq \varphi^C} \log \lambda_j(z)|+|\sum_{j\leq \varphi^C} \log \gamma_j(z)|
  \leq 
  N^{\varepsilon}
  ,
\end{equation}
and it is enough to show that
\begin{equation}\label{eq:reduc_logdiff1}
  \sum_{j> \varphi^C} \log \lambda_j(z)-\log \gamma_j(z)
  \prec
  1
  .
\end{equation}

In \cite{BourYauYin} it was shown that \eqref{eq:reduc_logdiff1} can be obtained from the concentration of the Stieltjes transform stated precisely in the following theorem.
\begin{thm}\label{thm:conc_main}
  There exists $\delta>0$ and $\tilde{Q}$ such that for any $\tau\leq |z|\leq 1-\tau$ or $1+\tau\leq |z|\leq \tau^{-1}$
  \begin{equation}\label{eq:conc_main}
    \sup_{w\in S_{z,\delta,\tilde{Q}}}|m(z,w)-m_c(z,w)|
    \prec
    \frac{1}{N\eta}
    ,
  \end{equation}
where
\begin{equation}\label{eq:s_set}
  S_{z,\delta,\tilde{Q}}=\{E+\sqrt{-1}\eta\,|\,(1+\delta)^{-1}\max\{0,\lambda_-(z)\}\leq E\leq (1+\delta)\lambda_+(z),\varphi^{\tilde{Q}}N^{-1}|m_c|^{-1}\leq \eta \leq 1\}
  .
\end{equation}

\end{thm}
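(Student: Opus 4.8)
\emph{Strategy.} The plan is to prove Theorem~\ref{thm:conc_main} by the local-law machinery of \cite{BourYauYin}, adapted to the block structure of the linearisation $X$. Since $X$ from \eqref{eq:linearisation_matrix} is built from the $n$ blocks $X_1,\dots,X_n$, I would follow Girko's hermitisation and work with the $2nN\times 2nN$ matrix $\mathcal H_z$ obtained by placing $Y_z=X-z$ and $Y_z^*$ as the off-diagonal blocks of an otherwise-zero $2\times2$ block matrix, together with its resolvent $G(\zeta)=(\mathcal H_z-\zeta)^{-1}$. After the substitution $w=\zeta^2$ one has the exact identity $\frac1{2nN}\tr G(\zeta)=-\zeta\,m(z,\zeta^2)$, so \eqref{eq:conc_main} becomes a concentration statement for $\frac1{2nN}\tr G$ on the corresponding $\zeta$-domain. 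Because $\mathcal H_z$ has $2n$ diagonal blocks of size $N$, I introduce the partial traces $m_i(\zeta)=\frac1N\tr_iG(\zeta)$, so that $m(z,w)=-\frac1{2n\zeta}\sum_{i=1}^{2n}m_i(\zeta)$. Expanding $G$ by Schur's complement formula block by block and using that the entries of the $X_a$ are independent, centred and of variance $N^{-1}$, one arrives at the approximate self-consistent system $-m_i(\zeta)^{-1}=\zeta+(S(z)\mathbf m(\zeta))_i+\Upsilon_i$, where $\mathbf m=(m_1,\dots,m_{2n})$, $S(z)$ is the deterministic variance profile imposed by the cyclic block pattern of \eqref{eq:linearisation_matrix} and by $|z|$, and $\Upsilon_i$ collects off-diagonal resolvent entries $G_{\alpha\beta}$ ($\alpha\ne\beta$) together with fluctuations $\mathbf a_\alpha^*G^{(\alpha)}\mathbf a_\alpha-\frac1N\tr G^{(\alpha)}$ of the rows $\mathbf a_\alpha$ of $X$. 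Setting $\Upsilon_i\equiv0$ and using the cyclic symmetry, this system collapses to a system of a bounded number of equations equivalent to the scalar equation \eqref{eq:mc_equation}, with deterministic solution $\mathbf m^{(c)}$ obeying $-\frac1{2n\zeta}\sum_im_i^{(c)}=m_c$.

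\emph{Entrywise law and continuity in $\eta$.} Next I would establish the entrywise local law $\Lambda:=\max_i|m_i-m_i^{(c)}|\prec\Psi$ and $\max_{\alpha\ne\beta}|G_{\alpha\beta}|\prec\Psi$ with $\Psi:=\sqrt{\Im m_c/(N\eta)}+(N\eta)^{-1}$, uniformly for $w\in S_{z,\delta,\tilde Q}$, by a bootstrap along a polynomial grid of values of $\eta=\Im w$. At $\eta=1$ the bound follows from $\|G\|\le\eta^{-1}$ and the large-deviation inequality for quadratic forms of the rows of $X$, where the subexponential decay \eqref{eq:int_2} enters. In the induction step one controls the fluctuation part of $\Upsilon_i$ by the Ward identity $\sum_\beta|G_{\alpha\beta}|^2=\eta^{-1}\Im G_{\alpha\alpha}$ and the a priori bound at the previous scale, getting $\Upsilon_i\prec\Psi$, and then inverts the linearised self-consistent system. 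The analytic core is the \emph{stability} of that system, i.e.\ quantitative invertibility, uniform over $w\in S_{z,\delta,\tilde Q}$, of the operator $1-\mathrm{diag}\big((m_i^{(c)})^2\big)S(z)$. This is exactly where the hypotheses $\tau\le|z|\le1-\tau$ or $1+\tau\le|z|\le\tau^{-1}$ are needed: by Theorem~\ref{thm:sosh_orou_product}, for such $z$ the profile $S(z)$ and $m_c$ stay in a regime where this operator is invertible, with the norm of the inverse blowing up at worst like $(\kappa+\eta)^{-1/2}$, $\kappa=\mathrm{dist}(\Re w,\{\lambda_-(z),\lambda_+(z)\})$, reflecting the square-root vanishing of $\rho_z$ at the soft edges $\lambda_\pm(z)$; the $(1\pm\delta)$-dilation of the spectral interval in \eqref{eq:s_set} costs only a constant here. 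Near $|z|=1$ the edges $0$ and $\lambda_-$ merge, and near $z=0$ the density $\rho_z$ becomes too singular, so the uniform stability fails there — hence the exclusions. Passing from the (polynomially many) grid points to all of $S_{z,\delta,\tilde Q}$ uses the $\OO{\eta^{-2}}$-Lipschitz continuity of $m(z,\cdot)$ and $m_c(z,\cdot)$ and a union bound, which preserves $\prec$.

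\emph{Fluctuation averaging and conclusion.} The entrywise law gives only $|m-m_c|\prec(N\eta)^{-1/2}$, which is short of \eqref{eq:conc_main}. To reach the optimal rate I would invoke the fluctuation-averaging lemma: through the stability operator, $m-m_c$ is governed by a weighted average $\frac1{nN}\sum_\alpha\Upsilon_\alpha$ of the individual errors, and a high-moment estimate using the independence of the rows of $X$ and the entrywise bounds already obtained shows $\frac1{nN}\sum_\alpha\Upsilon_\alpha\prec\Psi^2$, a factor $\Psi$ smaller than a single $\Upsilon_\alpha$. Substituting this back yields $|m-m_c|\prec\Psi^2\prec(N\eta)^{-1}$ on $S_{z,\delta,\tilde Q}$, which is \eqref{eq:conc_main}; combined with \eqref{eq:reduc_logdiff1} and the reduction of the previous section this also closes the proof of \eqref{eq:red_5}, and hence of \eqref{eq:int_1}.

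\emph{Main obstacle.} The hard part is the stability analysis of the second step. In \cite{BourYauYin} the self-consistent equation for the iid model is scalar and \eqref{eq:red_5} follows from a one-variable analysis, whereas the block-cyclic linearisation here forces a genuine vector (matrix Dyson) equation for the partial traces, so one must control the inverse of a matrix-valued operator uniformly on a domain $S_{z,\delta,\tilde Q}$ that is large enough — reaching the optimal scale $\eta\sim\varphi^{\tilde Q}N^{-1}|m_c|^{-1}$ and both soft edges $\lambda_\pm(z)$ — for the rigidity of the $\lambda_j(z)$ to follow. The estimates of \cite{NemiNHPO}, which hold only on a smaller set of resolvent parameters, do not suffice and must be sharpened; carrying the fluctuation averaging through for the partial traces rather than the full trace is a further technical point.
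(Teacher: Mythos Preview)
Your proposal is correct and takes essentially the same route as the paper: the paper works with the pair $G=(Y_z^*Y_z-w)^{-1}$, $\Gc=(Y_zY_z^*-w)^{-1}$ and the $2n$ partial traces $\LRI{m}{}{a}{G},\LRI{m}{}{a}{\Gc}$ ($a\in\ZZ/n\ZZ$) rather than your Hermitisation $\mathcal H_z$ at $\zeta=\sqrt w$, but these are equivalent repackagings (the diagonal blocks of $(\mathcal H_z-\zeta)^{-1}$ are $\zeta\Gc$ and $\zeta G$); your stability operator $1-\mathrm{diag}((m_i^{(c)})^2)S(z)$ is the paper's explicit $2n\times2n$ circulant-block matrix $\Gamma$ of Proposition~\ref{pr:Gamma_inv}, and your fluctuation averaging is the paper's Abstract Decoupling Lemma (Lemmas~\ref{lem:strong_error_1}--\ref{lem:strong_error_2}).

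One detail to tighten: your last step ``$|m-m_c|\prec\Psi^2\prec(N\eta)^{-1}$'' is immediate only in the bulk. Near the edges $\lambda_\pm$, where $\|\Gamma^{-1}\|\sim|w-\lambda_\pm|^{-1/2}$, a single round of fluctuation averaging yields a bound of the form $\Lambda\lesssim(N\eta)^{-1}+\sqrt{\tilde\Lambda/(N\eta)}$ that still contains the a~priori control $\tilde\Lambda$; the paper removes this by iterating $K\sim\log\log N$ times (end of Section~5.3). Separately, at $w=\lambda_\pm$ the matrix $\Gamma$ is genuinely singular with one-dimensional kernel spanned by $(1,\dots,1)^T$, and the paper handles this degenerate direction by a second-order expansion of the self-consistent system (Case~3 of Proposition~\ref{pr:sep}), which is the concrete vector-valued replacement for the scalar quadratic analysis in \cite{BourYauYin} and the precise form of the obstacle you flagged.
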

We refer reader to the section~5 in \cite{BourYauYin} for the detailed proof.
The rest of the article is devoted to the proof of Theorem~\ref{thm:conc_main}.


\section{Notations and definitions}
\label{sec:notation}

We start by fixing the notation and giving necessary definitions.
The main argument will follow the general framework proposed by Bourgade, Yau and Yin, therefore we try to keep our notation as close as possible to the notation used in \cite{BourYauYin}.

Throughout the rest of the article $a$ and $b$ will be elements of $\ZZ/n\ZZ$. 

Let $X_a,a\in \ZZ /n \ZZ, $ be independent $N\times N$ matrices, entries of which have zero mean, variance $N^{-1}$ and satisfy condition \eqref{eq:int_2}. Let $X$ be defined by \eqref{eq:linearisation_matrix}.
For  $z\in\CC$ and $w=E+\sqrt{-1}\eta\in\CC_+$ introduce the matrices
\begin{equation*}
  Y_z:=X-z
  ,\quad 
  G(w):=(Y_z^*Y_z-w)^{-1}
  ,\quad
  \Gc(w):=(Y_z Y_z^*-w)^{-1}.
\end{equation*}

We shall consider $nN\times nN$ matrices as consisting of $N\times N$ blocks indexed by $(a,b)$.
We shall use left superscript to specify the submatrix.
For example, for $i,j\in\{1,\ldots,N\}$ 
\begin{equation*}
  \LRI{x}{}{ab}{ij}
  :=
  x_{(\xo{a}-1)N+i,(\xo{b}-1)N+j}
  ,\quad
  \LRI{\Gc}{}{ab}{ij}
  :=
  \Gc_{(\xo{a}-1)N+i,(\xo{b}-1)N+j}.
\end{equation*}
where $\xo{a}\in a\cap\{1,\ldots,n\}$ and $\xo{b}\in b\cap\{1,\ldots,n\}$.

Define also $i(a):=(\xo{a}-1)N+i$ for $1\leq i\leq N$.

We shall use the index $a$ instead of $aa$ for for elements of the diagonal blocks, for example, $\LRI{G}{}{a}{kl}:=G_{k(a),l(a)}$. 

The $i(a)$th rows of matrices $X$ and $Y_z$ will be denoted by $\LRI{x}{}{}{i(a)}$ and $\LRI{y}{}{}{i(a)}$ respectively.
The corresponding columns of these matrices will be denoted by $\LRI{\xb}{}{}{i(a)}$ and $\LRI{\yb}{}{}{i(a)}$.

For $\TT,\UU\subset\{1,\ldots,nN\}$, $Y_z^{(\TT,\UU)}$ will denote a $(nN-|\UU|)\times (nN-|\TT|)$ matrix, obtained from $Y_z$ by deleting the rows with indices in $\UU$ and columns with indices in $\TT$. 
Resolvent matrices, corresponding to these minors, will be denoted by $G^{(\TT,\UU)}$ and $\Gc^{(\TT,\UU)}$, i.e.,
\begin{equation*}
  G^{(\TT,\UU)}(w)
  =
  (Y_z^{(\TT,\UU)*}Y_z^{(\TT,\UU)}-w)^{-1}
  ,\quad
  \Gc^{(\TT,\UU)}(w)
  =
  (Y_z^{(\TT,\UU)}Y_z^{(\TT,\UU)*}-w)^{-1}.
\end{equation*}
Note that we shall keep the indices of the elements of matrices for minors of these matrices. 
More precisely, the entries of $Y_z^{(\TT,\UU)}$ will be indexed by $(\{1,\ldots,nN\}\setminus \UU)\times(\{1,\ldots,nN\}\setminus \TT)$, and the entries of  $G^{(\TT,\UU)}$ and $\Gc^{(\TT,\UU)}$ by $(\{1,\ldots,nN\}\setminus \TT)^2$ and $(\{1,\ldots,nN\}\setminus \UU)^2$ respectively. For $i\in\TT, j\in\UU$ and $k\in\{1,\ldots,nN\}$ we shall define
\begin{equation*}
  G^{(\TT,\UU)}_{ik}
  =G^{(\TT,\UU)}_{ki}
  =0
  ,\quad
  \Gc^{(\TT,\UU)}_{jk}
  =
  \Gc^{(\TT,\UU)}_{kj}
  =0
.
\end{equation*}
Note that all these matrices depend on $z\in\CC$ and $w\in\CC_+$.

For $a\in\ZZ/n\ZZ$ and $\TT,\UU\subset\{1,\ldots,nN\}$, define next
\begin{equation*}
  \LRI{m}{(\TT,\UU)}{a}{G}
  :=
  \frac{1}{N}\sum_{i=1}^{N}\LRI{G}{(\TT,\UU)}{a}{ii}
  ,\quad
  \LRI{m}{(\TT,\UU)}{a}{\Gc}
  :=
  \frac{1}{N}\sum_{i=1}^{N}\LRI{\Gc}{(\TT,\UU)}{a}{ii}
  .
\end{equation*}
If $\TT=\UU=\emptyset$, we shall drop the $(\emptyset,\emptyset)$ superscript and write $\LRI{m}{}{a}{G}$ or $\LRI{m}{}{a}{\Gc}$.
Note, that
\begin{equation*}
  m_G(z,w)
  =
  \frac{1}{n}\sum_{a}\LRI{m}{}{a}{G}(z,w)
  .
\end{equation*}

Now we can introduce
\begin{equation*}
  \Lambda
  :=
  \max_a \{|\LRI{m}{}{a}{G}-m_c|,|\LRI{m}{}{a}{\Gc}-m_c|\}
\end{equation*}
and
\begin{equation*}
  \Psi
  :=
  \sqrt{\frac{\Im m_c +\Lambda}{N\eta}} +\frac{1}{N\eta}
\end{equation*}
where $m_c$ was defined in Theorem~\ref{thm:sosh_orou_product}.

We shall use $C$ and $c$ to denote different constants, that do not depend on $N$, $w$ or $z$.

For $\zeta>0$ we say that an event $\Xi_N$ holds with $\zeta$-high probability, if 
\begin{equation*}
  \Pr{\Xi^{\complement}}
  \leq
  N^{C}e^{-\varphi^{\zeta}}
  ,
\end{equation*}
where 
\begin{equation}
  \label{eq:varphi}
  \varphi
  :=
  (\log N)^{\log \log N}
\end{equation}
and $C>0$.

For $A,B>0$ we shall write  $A\sim_c B$ or simply $A\sim B$ if there is $c>0$ such that 
\begin{equation}
  \label{eq:equivalence_def}
   c^{-1}A\leq B\leq c A.
\end{equation}


\section{Tools and Methods}
\label{sec:tools}
This section collects some basic and classical tools which will be relevant towards the main result. 
Note that in Lemmas \ref{lem:bord_guion_minor_estimate}, \ref{lem:im_of_resolvent_diag_and_product}, \ref{lem:minor_differences} and \ref{lem:derivative_estimates} we deal with objects introduced in Section~\ref{sec:notation}, while in lemmas \ref{lem:mc_properties_0}, \ref{lem:mc_properties_1} and \ref{lem:mc_properties_2} we recall properties of the function $m_c$, which was introduced in Theorem~\ref{thm:sosh_orou_product}.

\subsection{Linear Algebra}
\label{sec:linear_algebra}

\begin{lem}\emph{(Schur complement formula, \cite[Section~0.7.3]{HornJohn})}
  Let $A$ be an invertible matrix and let $B$ be its inverse. Divide the matrices $A$ and $B$ into blocks
  \begin{equation*}
    A=
    \begin{pmatrix}
      A_{11}&A_{12}
      \\
      A_{21}&A_{22}
    \end{pmatrix}
    ,\quad
    B=
    \begin{pmatrix}
      B_{11}&B_{12}
      \\
      B_{21}&B_{22}
    \end{pmatrix},
  \end{equation*}
so that the blocks with the same index have the same size, and the blocks on the diagonal are square submatrices.

If $A_{22}$ is invertible, then
  \begin{equation}\label{eq:schur_complement}
    \left[B_{11}\right]^{-1}
    =
    A_{11}-A_{12}\left[A_{22}\right]^{-1}A_{21}
    .
  \end{equation}
\end{lem}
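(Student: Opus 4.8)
The plan is to derive \eqref{eq:schur_complement} directly from the block form of the identities $AB=I$ and $BA=I$, where $I$ denotes the identity matrix of the appropriate size; the only ingredient is the invertibility of $A_{22}$, which lets us eliminate the off-diagonal blocks of $B$.

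First I would expand $AB=I$ into $2\times2$ blocks. Comparing the $(1,1)$ and $(2,1)$ blocks gives
\begin{equation*}
  A_{11}B_{11}+A_{12}B_{21}=I,\qquad A_{21}B_{11}+A_{22}B_{21}=0.
\end{equation*}
Since $A_{22}$ is invertible, the second identity yields $B_{21}=-A_{22}^{-1}A_{21}B_{11}$, and substituting it into the first gives
\begin{equation*}
  \bigl(A_{11}-A_{12}A_{22}^{-1}A_{21}\bigr)B_{11}=I.
\end{equation*}
To upgrade this to the statement that $B_{11}$ is invertible with inverse equal to the Schur complement — rather than merely a left factor of $I$ — I would run the symmetric computation with $BA=I$: comparing its $(1,1)$ and $(1,2)$ blocks gives $B_{11}A_{11}+B_{12}A_{21}=I$ and $B_{11}A_{12}+B_{12}A_{22}=0$, hence $B_{12}=-B_{11}A_{12}A_{22}^{-1}$ and therefore
\begin{equation*}
  B_{11}\bigl(A_{11}-A_{12}A_{22}^{-1}A_{21}\bigr)=I.
\end{equation*}
Combining the two displayed identities, $B_{11}$ is two-sided invertible with $[B_{11}]^{-1}=A_{11}-A_{12}A_{22}^{-1}A_{21}$.

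An essentially equivalent route is to start from the block factorization
\begin{equation*}
  A=
  \begin{pmatrix} I & A_{12}A_{22}^{-1}\\ 0 & I\end{pmatrix}
  \begin{pmatrix} A_{11}-A_{12}A_{22}^{-1}A_{21} & 0\\ 0 & A_{22}\end{pmatrix}
  \begin{pmatrix} I & 0\\ A_{22}^{-1}A_{21} & I\end{pmatrix},
\end{equation*}
valid whenever $A_{22}$ is invertible, then to invert both sides (the outer two factors being unipotent triangular, hence trivially invertible) and read off the $(1,1)$ block of $A^{-1}$. There is no real obstacle here; the only point that needs care is that a one-sided block identity does not by itself force the square matrix $B_{11}$ to be invertible, which is precisely why the argument uses both $AB=I$ and $BA=I$ (equivalently, one may invoke that a square matrix possessing a one-sided inverse is invertible).
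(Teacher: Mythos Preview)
Your argument is correct and is the standard derivation of the Schur complement formula. The paper does not give its own proof of this lemma at all; it merely states the result and cites \cite[Section~0.7.3]{HornJohn}, so there is nothing to compare beyond noting that your block-elimination argument (or the equivalent LDU factorization you mention) is exactly the kind of computation one finds in that reference.
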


\begin{lem}
  Let $A$ be a square matrix and let $w$ be a complex number. If $A^*A-\omega$ is invertible, then
    \begin{equation*}
      A(A^*A-w)^{-1}A^*
      =
      I+w(AA^* -w)^{-1}
      .
    \end{equation*}
\end{lem}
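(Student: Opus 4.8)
The statement to prove is the identity $A(A^*A-w)^{-1}A^* = I + w(AA^*-w)^{-1}$.

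The plan is to verify this by a direct algebraic manipulation, using only the definition of matrix inverse and the elementary intertwining relation $A(A^*A) = (AA^*)A$, which holds for any square matrix $A$. I would first observe that both $A^*A - w$ and $AA^* - w$ are assumed invertible (the hypothesis names $A^*A - w$; invertibility of $AA^* - w$ follows since $A^*A$ and $AA^*$ have the same spectrum for square $A$, so $w$ avoids the spectrum of both, but in fact the computation below only uses what is needed).

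The key step is the following chain of equalities. Starting from the left-hand side, write
\begin{equation*}
  A(A^*A - w)^{-1}A^*
  =
  \bigl[(AA^* - w) + w\bigr](AA^* - w)^{-1}\bigl[(AA^* - w) + w\bigr](AA^* - w)^{-1}\cdot(\ldots)
\end{equation*}
— but that is clumsy; instead the cleanest route is to multiply the desired identity on the left by $(AA^* - w)$ and show
\begin{equation*}
  (AA^* - w)\,A\,(A^*A - w)^{-1}A^*
  =
  (AA^* - w) + wI
  =
  AA^*.
\end{equation*}
For the left side, use $(AA^* - w)A = A(A^*A) - wA = A(A^*A - w)$, hence $(AA^* - w)A(A^*A - w)^{-1} = A(A^*A - w)(A^*A - w)^{-1} = A$, and multiplying by $A^*$ on the right gives exactly $AA^*$. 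This establishes $(AA^* - w)\,[\,A(A^*A-w)^{-1}A^* - I\,] = wI$, and then left-multiplying by $(AA^* - w)^{-1}$ yields the claim.

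There is essentially no obstacle here: the only thing to be careful about is invoking invertibility of $AA^* - w$, which is legitimate because $A$ is square so $AA^*$ and $A^*A$ are similar up to the kernel issue — more precisely they have the same nonzero eigenvalues and, being of equal size, the same characteristic polynomial, so $\det(AA^* - w) = \det(A^*A - w) \neq 0$. (Alternatively one can avoid even this by noting the displayed computation shows $w\,[\,I + w(AA^*-w)^{-1}\,]$... but the similarity argument is cleanest.) I would present the proof as the two-line multiplication above followed by the remark on invertibility.
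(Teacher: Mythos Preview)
Your argument is correct. The paper's own proof is a one-line citation: ``Follows from Woodbury matrix identity.'' Your route is genuinely different and more elementary: rather than specializing the general Woodbury formula $(B+UCV)^{-1}=B^{-1}-B^{-1}U(C^{-1}+VB^{-1}U)^{-1}VB^{-1}$ with $B=-wI$, you exploit the intertwining relation $(AA^*-w)A=A(A^*A-w)$ directly and reduce the claim to the trivial identity $(AA^*-w)\cdot[\text{LHS}]=AA^*$. This is cleaner in that it avoids invoking an external identity and sidesteps the need to treat $w=0$ separately (the Woodbury specialization requires $-wI$ invertible). The only extra ingredient you need, and which you handle correctly, is the invertibility of $AA^*-w$; your observation that $\det(AA^*-w)=\det(A^*A-w)$ for square $A$ (same characteristic polynomial for $AA^*$ and $A^*A$) is the right justification. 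The false-start sentence in your write-up should of course be removed, but the two-line core argument is complete.
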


\begin{proof}
  Follows from Woodbury matrix identity (see \cite[Section~0.7.4]{HornJohn}).
\end{proof}

\begin{lem}\emph{\cite[proof of Lemma~C.3]{BordGuio}}\label{lem:bord_guion_minor_estimate}
  Let $\TT,\UU,\Kc\subset\{1,\ldots,nN\}$. Then for any $i\notin \TT$
  \begin{equation}\label{eq:bord_guion_minor_estimate}
    |\sum_{k\in
      \Kc} G^{(\TT i,\UU)}_{kk}-G^{(\TT ,\UU)}_{kk}|
    \leq
    \frac{4}{\eta}
    ,\quad
    |\sum_{k\in \Kc} G^{(\UU,\TT i)}_{kk}-G^{(\UU,\TT)}_{kk}|
    \leq
    \frac{4}{\eta}
    .
  \end{equation}
The same is true for $\Gc$.
\end{lem}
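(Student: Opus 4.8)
\emph{Proof sketch.} The plan is to reduce all four inequalities to two elementary resolvent perturbation identities, the choice between them being dictated by whether enlarging the deleted‑index set removes a row or a column of $Y_z$ from the relevant Hermitization.

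First I would record the facts about resolvents of Hermitian positive semidefinite matrices I intend to use. For such an $H$ and $w=E+\sqrt{-1}\eta$ with $\eta>0$, put $G=(H-w)^{-1}$; then $H-w$ is invertible with $\|G\|\le\eta^{-1}$, and, writing $H=\sum_j\lambda_j P_j$ spectrally, for every vector $v$ one has $\Im\langle v,Gv\rangle=\eta\sum_j\|P_j v\|^2|\lambda_j-w|^{-2}=\eta\|Gv\|^2=\eta\|G^*v\|^2\ge 0$, the last two equalities because $G$ and $G^*=(H-\bar w)^{-1}$ are both functions of $H$ and hence commute. In particular $\sum_k|G_{ki}|^2=(G^*G)_{ii}=(GG^*)_{ii}=\sum_k|G_{ik}|^2=\eta^{-1}\Im G_{ii}$, with $\Im G_{ii}>0$ and $|G_{ii}|\ge\Im G_{ii}$, while $1-\langle v,Gv\rangle\ne 0$ whenever $v\ne 0$ since then $\Im\langle v,Gv\rangle>0$.

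Next the dichotomy. Recall $G^{(\TT,\UU)}$ is the resolvent of $H^{(\TT,\UU)}:=Y_z^{(\TT,\UU)*}Y_z^{(\TT,\UU)}$, indexed by $\{1,\dots,nN\}\setminus\TT$. Since $i\notin\TT$, passing from $\TT$ to $\TT i$ deletes the $i$‑th column of $Y_z^{(\TT,\UU)}$, hence removes row and column $i$ from $H^{(\TT,\UU)}$, so $H^{(\TT i,\UU)}$ is the corresponding principal minor of $H^{(\TT,\UU)}$; the Schur‑complement (minor) identity then gives $G^{(\TT i,\UU)}_{kk}-G^{(\TT,\UU)}_{kk}=-G^{(\TT,\UU)}_{ki}G^{(\TT,\UU)}_{ik}/G^{(\TT,\UU)}_{ii}$ for every $k\ne i$ in the index set (legitimate because $G^{(\TT,\UU)}_{ii}\ne 0$). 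Summing over $k\in\Kc$: terms with $k\in\TT$ vanish, both entries being $0$ by the zero‑padding convention; the term $k=i$ (if $i\in\Kc$) equals $-G^{(\TT,\UU)}_{ii}$ because $G^{(\TT i,\UU)}_{ii}=0$, of modulus at most $\|G^{(\TT,\UU)}\|\le\eta^{-1}$; and the remaining terms sum, in modulus, to $|G^{(\TT,\UU)}_{ii}|^{-1}\big|\sum_{k\ne i}G^{(\TT,\UU)}_{ki}G^{(\TT,\UU)}_{ik}\big|$, which after Cauchy–Schwarz (extending the sum to the whole index set) is at most $|G^{(\TT,\UU)}_{ii}|^{-1}\eta^{-1}\Im G^{(\TT,\UU)}_{ii}\le\eta^{-1}$, giving $\le 2/\eta\le 4/\eta$. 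For the second inequality, enlarging $\TT$ inside $G^{(\UU,\TT)}$ deletes the $i$‑th row of $Y_z^{(\UU,\TT)}$ (which exists exactly because $i\notin\TT$), so $H^{(\UU,\TT i)}=H^{(\UU,\TT)}-\mathbf r\mathbf r^*$ for $\mathbf r$ that row written as a column — a rank‑one perturbation; both resolvents share the index set $\{1,\dots,nN\}\setminus\UU$, and the Sherman–Morrison formula gives, for $\mathbf r\ne 0$ (the case $\mathbf r=0$ being trivial), $G^{(\UU,\TT i)}=G^{(\UU,\TT)}+\big(1-\mathbf r^*G^{(\UU,\TT)}\mathbf r\big)^{-1}G^{(\UU,\TT)}\mathbf r\mathbf r^*G^{(\UU,\TT)}$. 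Summing the diagonal differences over $\Kc$ and applying Cauchy–Schwarz bounds the result, in modulus, by $\|G^{(\UU,\TT)}\mathbf r\|\,\|G^{(\UU,\TT)*}\mathbf r\|\,|1-\mathbf r^*G^{(\UU,\TT)}\mathbf r|^{-1}=\eta^{-1}\Im\langle\mathbf r,G^{(\UU,\TT)}\mathbf r\rangle\,|1-\mathbf r^*G^{(\UU,\TT)}\mathbf r|^{-1}\le\eta^{-1}$, using $|1-\mathbf r^*G^{(\UU,\TT)}\mathbf r|\ge\Im\langle\mathbf r,G^{(\UU,\TT)}\mathbf r\rangle$. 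The two statements for $\Gc$ follow from the same computations with the roles of rows and columns of $Y_z$ interchanged, since $\Gc^{(\TT,\UU)}$ is the resolvent of $Y_z^{(\TT,\UU)}Y_z^{(\TT,\UU)*}$: for $\Gc$ the first inequality is the rank‑one case and the second the minor case, opposite to $G$.

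I do not expect a real obstacle here — the lemma is elementary, and once the two identities are in place the constant $4$ comes out with room to spare. The only delicate point is bookkeeping: keeping track of which index set each resolvent is indexed by, handling the zero‑padded entries (in particular isolating the single index $k=i$ in the minor case), and checking that $G^{(\TT,\UU)}_{ii}$ and $1-\mathbf r^*G^{(\UU,\TT)}\mathbf r$ never vanish, all consequences of $\eta>0$. The analytic input — Cauchy–Schwarz together with the identity $\|Gv\|^2=\eta^{-1}\Im\langle v,Gv\rangle$ — is entirely routine.
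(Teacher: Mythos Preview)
Your proof is correct and follows the standard approach: the paper does not give its own argument but simply cites \cite[proof of Lemma~C.3]{BordGuio}, and the ingredients you use --- the minor identity and the rank-one update formula --- are precisely those recorded later in the paper as Lemma~\ref{lem:minor_differences}, combined with the Ward identity of Lemma~\ref{lem:im_of_resolvent_diag_and_product}. Your bookkeeping (zero-padding for $k\in\TT$, isolating $k=i$, nonvanishing of $G_{ii}^{(\TT,\UU)}$ and of $1-\mathbf r^*G^{(\UU,\TT)}\mathbf r$) is handled correctly, and the constant $4/\eta$ comes out with the expected slack.
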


\begin{lem}\label{lem:im_of_resolvent_diag_and_product}
For $i\in\{1,\ldots,nN\}$
  \begin{equation}\label{eq:im_of_resolvent_diag_and_product}
    \sum_{k=1}^{nN}|G_{ki}|^2
    =
    \frac{\Im G_{ii}}{\eta}
    .
  \end{equation}
\end{lem}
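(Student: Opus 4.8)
The statement is the standard Ward (resolvent) identity, so the plan is short. Write $H:=Y_z^*Y_z$, which is Hermitian, and recall $G(w)=(H-w)^{-1}$. Since $\bar w=E-\sqrt{-1}\eta$ and $H$ is Hermitian, we have $G(w)^*=(H-\bar w)^{-1}=G(\bar w)$; in particular $G(w)$ and $G(w)^*$ are both functions of the single Hermitian matrix $H$ and therefore commute.

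First I would apply the resolvent identity
\begin{equation*}
  G(w)-G(\bar w)
  =
  G(\bar w)\big[(H-\bar w)-(H-w)\big]G(w)
  =
  (w-\bar w)\,G(\bar w)G(w)
  =
  2\sqrt{-1}\,\eta\,G(w)^*G(w).
\end{equation*}
On the other hand, by definition of the imaginary part of a matrix, $G(w)-G(w)^*=2\sqrt{-1}\,\Im G(w)$. Comparing the two expressions gives the operator identity $\Im G(w)=\eta\, G(w)^*G(w)$.

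Finally I would read off the $(i,i)$ entry of both sides: since $(G^*G)_{ii}=\sum_{k=1}^{nN}(G^*)_{ik}G_{ki}=\sum_{k=1}^{nN}\overline{G_{ki}}\,G_{ki}=\sum_{k=1}^{nN}|G_{ki}|^2$, we obtain $\Im G_{ii}=\eta\sum_{k=1}^{nN}|G_{ki}|^2$, which is \eqref{eq:im_of_resolvent_diag_and_product} after dividing by $\eta>0$. There is no real obstacle here; the only point requiring a word of care is that $G(w)$ and $G(w)^*$ commute (because both are functions of the Hermitian matrix $Y_z^*Y_z$), which is what lets the resolvent identity produce the symmetric product $G^*G$ rather than $GG^*$, matching the sum $\sum_k|G_{ki}|^2$ over the first index.
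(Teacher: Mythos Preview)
Your argument is correct. The paper proves the same identity by diagonalising $Y_z^*Y_z=U^*DU$ with $D=\mathrm{diag}((\lambda_j-w)^{-1})$, writing $\sum_k|G_{ki}|^2=[G^*G]_{ii}=[U^*D^*DU]_{ii}$, and then using the scalar relation $\eta^{-1}\Im(\lambda_j-w)^{-1}=|\lambda_j-w|^{-2}$ eigenvalue by eigenvalue. Your route via the resolvent identity $G(w)-G(\bar w)=(w-\bar w)G(\bar w)G(w)$ together with $G(w)^*=G(\bar w)$ is equivalent in spirit---both exploit that $G$ is a function of a Hermitian matrix---but avoids the explicit spectral decomposition and gives the operator identity $\Im G=\eta\,G^*G$ in one line. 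Either way is standard; your version is slightly more streamlined. (Your closing remark about commutativity is harmless but not needed: the resolvent identity, in the order you wrote it, already produces $G(\bar w)G(w)=G^*G$ directly.)
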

\begin{proof}
Let $\lambda_1,\ldots,\lambda_{nN}$ be eigenvalues of the matrix $Y_z^* Y_z$. Then $G=U^* D U$, where $U$ is unitary and $D=\mathrm{diag}((\lambda_1-w)^{-1},\ldots,(\lambda_{nN}-w)^{-1})$. Note that
  \begin{equation*}
    \sum_{k=1}^{nN}|G_{ki}|^2
    =
    \left[G^* G\right]_{ii}
    =
    \left[U^*D^* D U\right]_{ii}
    .
  \end{equation*}
The lemma now follows from the relation
\begin{equation*}
  \frac{1}{\eta}\Im \frac{1}{\lambda_i-w}
  =
  \frac{1}{|\lambda_i-w|^2}
  ,\quad
  i\in\{1,\ldots,nN\}
  .
\end{equation*}
\end{proof}

\begin{lem}\emph{(\cite[Lemma~6.3]{BourYauYin})}\label{lem:minor_differences}
  Let $\TT,\UU\subset\llbracket 1,nN\rrbracket$. If $i,j\notin \TT\cup \{k\}$ then
  \begin{equation}
    \label{eq:minor_differences_1}
    G_{ij}^{(\TT,\UU)}-G_{ij}^{(\TT k,\UU)}
    =
    \frac{G_{ik}^{(\TT,\UU)} G_{kj}^{(\TT,\UU)}}{G_{kk}^{(\TT,\UU)}}
    ,\quad
    \Gc_{ij}^{(\UU,\TT)}-\Gc_{ij}^{(\UU ,\TT k)}
    =
    \frac{\Gc_{ik}^{(\UU,\TT)} \Gc_{kj}^{(\UU,\TT)}}{\Gc_{kk}^{(\UU,\TT)}}
    ,
  \end{equation}
  \begin{equation}
    \label{eq:minor_differences_2}
    G^{(\TT,\UU)}-G^{(\TT,\UU k)}
    =
    -\frac{\left(G^{(\TT,\UU k)}y_{k}^* \right) \left(y_k G^{(\TT,\UU k)}\right)}{1+y_k G^{(\TT,\UU k)} y_k^*}
    ,\quad
    \Gc^{(\TT,\UU)}-\Gc^{(\TT k,\UU)}
    =
    -\frac{\left(\Gc^{(\TT k,\UU )} \yb_{k} \right) \left(\yb_k^* \Gc^{(\TT k,\UU)} \right)}{1+\yb_k^* \Gc^{(\TT k,\UU)} \yb_k}
    .
  \end{equation}
\end{lem}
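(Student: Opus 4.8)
\emph{Proof proposal.} The plan is to reduce both displays to two standard pieces of linear algebra, once each of the four minor operations in the statement has been matched to the modification it induces on the relevant Gram matrix ($Y_z^{(\TT,\UU)*}Y_z^{(\TT,\UU)}$ or $Y_z^{(\TT,\UU)}Y_z^{(\TT,\UU)*}$). The two observations to record first are: deleting a column of $Y_z^{(\TT,\UU)}$ removes the corresponding row and column of $Y_z^{(\TT,\UU)*}Y_z^{(\TT,\UU)}$, whereas deleting a row $y$ of $Y_z^{(\TT,\UU)}$ subtracts the rank-one matrix $y^*y$ from $Y_z^{(\TT,\UU)*}Y_z^{(\TT,\UU)}$; symmetrically, for $Y_z^{(\TT,\UU)}Y_z^{(\TT,\UU)*}$ the roles of rows and columns are exchanged.

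For \eqref{eq:minor_differences_1} I would establish the principal-minor identity: if $M$ is invertible and $k$ an index for which the minor $M^{(k)}$ (delete row and column $k$) is invertible, equivalently $(M^{-1})_{kk}\neq0$, then for $i,j\neq k$
\begin{equation*}
  (M^{(k)})^{-1}_{ij}
  =
  (M^{-1})_{ij}-\frac{(M^{-1})_{ik}(M^{-1})_{kj}}{(M^{-1})_{kk}}.
\end{equation*}
This is checked by setting $N_{ij}:=(M^{-1})_{ij}-(M^{-1})_{ik}(M^{-1})_{kj}/(M^{-1})_{kk}$ for $i,j\neq k$ and verifying $\sum_{j\neq k}N_{ij}M_{jl}=\delta_{il}$ for $l\neq k$ via $\sum_j (M^{-1})_{pj}M_{jl}=\delta_{pl}$; the residual terms cancel, so $NM^{(k)}=I$ and $N=(M^{(k)})^{-1}$. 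Applying this with $M=Y_z^{(\TT,\UU)*}Y_z^{(\TT,\UU)}-w$, for which $M^{-1}=G^{(\TT,\UU)}$ and $(M^{(k)})^{-1}=G^{(\TT k,\UU)}$ (since $G^{(\TT k,\UU)}$ corresponds to deleting column $k$ of $Y_z^{(\TT,\UU)}$), gives the first identity; applying it with $M=Y_z^{(\UU,\TT)}Y_z^{(\UU,\TT)*}-w$ and using that $\Gc^{(\UU,\TT k)}$ corresponds to deleting \emph{row} $k$ of $Y_z^{(\UU,\TT)}$ — which again removes row and column $k$ of $M$ — gives the second.

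For \eqref{eq:minor_differences_2} I would instead use the rank-one Sherman--Morrison form of the Woodbury identity, $(B+vu^*)^{-1}=B^{-1}-B^{-1}vu^*B^{-1}/(1+u^*B^{-1}v)$ (see \cite[Section~0.7.4]{HornJohn}). Since $G^{(\TT,\UU k)}$ corresponds to deleting row $k$ of $Y_z^{(\TT,\UU)}$, one has $(G^{(\TT,\UU)})^{-1}=(G^{(\TT,\UU k)})^{-1}+y_k^*y_k$ with $y_k$ the $k$-th row of $Y_z^{(\TT,\UU)}$, and taking $B=(G^{(\TT,\UU k)})^{-1}$ yields $G^{(\TT,\UU)}-G^{(\TT,\UU k)}=-(G^{(\TT,\UU k)}y_k^*)(y_kG^{(\TT,\UU k)})/(1+y_kG^{(\TT,\UU k)}y_k^*)$. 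The companion identity follows identically, now with $B=(\Gc^{(\TT k,\UU)})^{-1}$ and the rank-one term $\yb_k\yb_k^*$ produced by deleting column $k$ of $Y_z^{(\TT,\UU)}$. Throughout, the zero-padding conventions of Section~\ref{sec:notation} ensure the matrix--vector products on the right are well defined and unaffected by the entries of $y_k,\yb_k$ indexed by removed rows or columns.

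I do not expect a genuine obstacle here: the argument is two elementary lemmas followed by substitution. The only point demanding care is the bookkeeping — correctly pairing each left-superscript operation ($\TT\mapsto\TT k$ versus $\UU\mapsto\UU k$, applied to $G$ versus to $\Gc$) with the appropriate row- or column-deletion on $Y_z$, hence with either a principal-minor or a rank-one modification of the associated Gram matrix, and keeping the indexing conventions consistent across the four cases.
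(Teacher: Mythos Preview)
Your proposal is correct and is precisely the standard argument underlying this lemma. The paper itself does not give a proof here: the statement is quoted verbatim from \cite[Lemma~6.3]{BourYauYin} and used as a black box, so there is no ``paper's own proof'' to compare against. Your derivation --- the principal-minor identity (equivalently the Schur complement formula already recorded as \eqref{eq:schur_complement}) for \eqref{eq:minor_differences_1}, and Sherman--Morrison for \eqref{eq:minor_differences_2} --- is exactly how the result is established in the reference, and your bookkeeping of which superscript slot corresponds to a row versus a column deletion of $Y_z$ matches the conventions of Section~\ref{sec:notation}.
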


\begin{lem}\label{lem:derivative_estimates}
Let $w=E+\sqrt{-1}\eta\in\CC_+$. 
Then for any $i\in\llbracket 1,nN\rrbracket$
\begin{EA}{c}
  \left|\frac{\partial G_{ii}}{\partial E}\right|
  +
  \left|\frac{\partial G_{ii}}{\partial \eta}\right|
  +
  \left|\frac{\partial \Gc_{ii}}{\partial E}\right|
  +
  \left|\frac{\partial \Gc_{ii}}{\partial \eta}\right|
  =
  \OO{\frac{1}{\eta^2}}.
\end{EA}
\end{lem}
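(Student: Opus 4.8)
The plan is to rewrite all four derivatives in terms of the resolvents themselves. Since $G(w)=(Y_z^*Y_z-w)^{-1}$ is analytic in $w=E+\sqrt{-1}\eta$ on $\CC_+$, we have $\partial_w G=G^2$, and because $\partial w/\partial E=1$, $\partial w/\partial\eta=\sqrt{-1}$, it follows that
\begin{equation*}
  \frac{\partial G_{ii}}{\partial E}=(G^2)_{ii},
  \qquad
  \frac{\partial G_{ii}}{\partial \eta}=\sqrt{-1}\,(G^2)_{ii},
\end{equation*}
and analogously $\partial_E\Gc_{ii}=(\Gc^2)_{ii}$, $\partial_\eta\Gc_{ii}=\sqrt{-1}\,(\Gc^2)_{ii}$. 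Hence it suffices to show $|(G^2)_{ii}|+|(\Gc^2)_{ii}|=\OO{\eta^{-2}}$.

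For the first term I would expand $(G^2)_{ii}=\sum_{k=1}^{nN}G_{ik}G_{ki}$ and apply Cauchy--Schwarz:
\begin{equation*}
  |(G^2)_{ii}|
  \leq
  \Big(\sum_{k=1}^{nN}|G_{ik}|^2\Big)^{1/2}\Big(\sum_{k=1}^{nN}|G_{ki}|^2\Big)^{1/2}.
\end{equation*}
The column sum is exactly $\Im G_{ii}/\eta$ by Lemma~\ref{lem:im_of_resolvent_diag_and_product}, and the row sum equals the same quantity: running the proof of that lemma with $GG^*=U^*DD^*U$ in place of $G^*G=U^*D^*DU$ gives $\sum_k|G_{ik}|^2=[GG^*]_{ii}=[G^*G]_{ii}=\Im G_{ii}/\eta$, since the diagonal matrix $D$ of resolvent eigenvalues satisfies $DD^*=D^*D$. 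Therefore $|(G^2)_{ii}|\leq \Im G_{ii}/\eta\leq |G_{ii}|/\eta$. Finally, $Y_z^*Y_z$ is Hermitian, so $|G_{ii}|\leq \|G\|=\mathrm{dist}(w,\mathrm{spec}(Y_z^*Y_z))^{-1}\leq \eta^{-1}$, which yields $|(G^2)_{ii}|\leq \eta^{-2}$.

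The bound for $\Gc$ is obtained by the identical argument with $Y_zY_z^*$ replacing $Y_z^*Y_z$ (this matrix is again Hermitian, so $\|\Gc\|\leq\eta^{-1}$, and the analogue of Lemma~\ref{lem:im_of_resolvent_diag_and_product} holds for $\Gc$). Summing the four contributions gives the claimed $\OO{\eta^{-2}}$ estimate. I do not anticipate a genuine obstacle here; the only point requiring a line of justification is the row-sum identity $\sum_k|G_{ik}|^2=\Im G_{ii}/\eta$, which is immediate from the proof of Lemma~\ref{lem:im_of_resolvent_diag_and_product}.
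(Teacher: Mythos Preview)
Your argument is correct and amounts to the same computation as the paper's: writing $G_{ii}=\sum_j|u_{ij}|^2(\lambda_j-w)^{-1}$ and differentiating term by term gives precisely $(G^2)_{ii}=\sum_j|u_{ij}|^2(\lambda_j-w)^{-2}$, whose modulus is bounded by $\eta^{-2}$ since $\sum_j|u_{ij}|^2=1$. Your detour through Cauchy--Schwarz and the Ward identity is valid but unnecessary---once you have $\partial_E G_{ii}=(G^2)_{ii}$ you can bound it directly by $\|G^2\|\leq\|G\|^2\leq\eta^{-2}$ (or by the spectral sum above), which is exactly the paper's route.
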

\begin{proof}
  With the notation used in the proof of Lemma~\ref{lem:im_of_resolvent_diag_and_product}, for any $i\in\llbracket 1,nN\rrbracket$
  \begin{equation*}
    G_{ii}
    =
    \sum_{j=1}^{nN} |u_{ij}|^2 \frac{1}{\lambda_{j}-E-i\eta}
    ,
  \end{equation*}
where $u_{ij}$ are the entries or the unitary matrix $U$.
Therefore, the bound for $G_{ii}$ follows from the fact that
\begin{equation*}
  \left|\frac{\partial}{\partial \eta}\left(\frac{1}{\lambda_{j}-E-\sqrt{-1}\eta}\right)\right|+\left|\frac{\partial}{\partial E}\left(\frac{1}{\lambda_{j}-E-\sqrt{-1}\eta}\right)\right|
  =
  \OO{\frac{1}{\eta^2}}
  .
\end{equation*}
The proof for $\Gc_{ii}$ is similar.
\end{proof}

\subsection{Properties of $m_c$}

\begin{lem}\emph{(\cite[Lemma~4.1]{BourYauYin})}\label{lem:mc_properties_0}
There exists $\tau_0>0$ such that for any $\tau\leq \tau_0$ if $|z|\leq 1-\tau$ and $|w|\leq \tau^{-1}$ then the following properties concerning $m_c$ hold:
\begin{itemize}
\item[(1)] 
 If $E\geq \lambda_+$ and $|w-\lambda_+|\geq \tau$, then
  \begin{equation}
    |\Re m_c|\sim 1
    ,\quad
    -\frac{1}{2}\leq \Re m_c <0
    ,\quad
    \Im m_c \sim \eta
    ,
  \end{equation}
\item[(2)]
  If $|w-\lambda_+|\leq \tau$, then
  \begin{equation}\label{eq:prop_mc_lambda_1}
    m_c(z,w)
    =
    -\frac{2}{3+\afr}+\sqrt{\frac{8(1+\afr)^{3}}{\afr(3+\afr)^{5}}}(w-\lambda_+)^{1/2}+\OO{\lambda_+-w}
    ,
  \end{equation}
  and
  \begin{equation}
    \Im m_c
    \sim
    \left\{
      \begin{array}{ll}
        \frac{\eta}{\sqrt{|E-\lambda_+|}}&\mbox{ if } |E-\lambda_+|\geq \eta \mbox{ and } E\geq \lambda_+
                                           ,\\
        \sqrt{\eta} & \mbox{ if } |E-\lambda_+|\leq \eta \mbox{ or } E\leq \lambda_+
                      ,
      \end{array}
    \right.
  \end{equation}
where $\afr$ and $\lambda_{\pm}$ were defined in \eqref{eq:a_and_lambda}.
\item[(3)]
  If $|w|\leq \tau$, then
  \begin{equation}\label{eq:prop_mc_zero}
    m_c(z,w)
    =
    \sqrt{-1}\frac{1-|z|^2}{\sqrt{w}}+\frac{1-2|z|^2}{2|z|^2-2} +\OO{\sqrt{w}}
  \end{equation}
\item[(4)]
  If $|w|\geq \tau$, $|w-\lambda_+|\geq \tau$ and $E\leq \lambda_+$, then
  \begin{equation}
    |m_c|\sim 1
    ,\quad
    \Im m_c \sim 1
  \end{equation}
\end{itemize}
\end{lem}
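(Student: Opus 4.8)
The plan is to derive all four statements from the cubic self-consistent equation \eqref{eq:mc_equation}, namely $m_c^{-1}=-w(1+m_c)+|z|^2(1+m_c)^{-1}$, together with the explicit description of the density $\rho_z$ and its support endpoints $\lambda_\pm$ from Theorem~\ref{thm:sosh_orou_product}. Clearing denominators, \eqref{eq:mc_equation} becomes a cubic polynomial in $m_c$ whose coefficients are polynomials in $w$ and $|z|^2$; the branch we track is the Stieltjes transform, singled out by $\Im m_c>0$ for $\Im w>0$. First I would record the boundary values on the real axis: for $E$ outside $[\max\{0,\lambda_-\},\lambda_+]$ the measure $\nu_z$ has no mass, so $m_c(z,E)$ is real, smooth, and has a definite sign there, while inside the support $\Im m_c(z,E+i0)=\pi\rho_z(E)>0$. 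Since $\afr=\sqrt{1+8|z|^2}$ and $|z|\le 1-\tau$, we have $\afr$ bounded away from $1$ and from above, so $\lambda_\pm$ are comparable to positive constants and separated by a constant gap; this uniformity underlies every $\sim$ in the statement.

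For part (1), when $E\ge\lambda_+$ stays a distance $\ge\tau$ from $\lambda_+$ and $|w|\le\tau^{-1}$, the cubic \eqref{eq:mc_equation} has a real root that is smooth in $w$ and bounded; evaluating \eqref{eq:mc_equation} (or the known formula $m_c(z,\lambda_+)=-2/(3+\afr)$) shows this root lies in $(-1/2,0)$ and has modulus $\sim 1$. Then $\Im m_c\sim\eta$ for small $\eta$ follows by differentiating the cubic implicitly: $\partial m_c/\partial w$ is finite and nonzero at the real point (the discriminant is nonvanishing away from $\lambda_\pm$), so $m_c(z,E+i\eta)=m_c(z,E)+i\eta\,\partial_w m_c+O(\eta^2)$ and $\Im\partial_w m_c\ne 0$. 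Part (4) is handled the same way: for $E\le\lambda_+$ bounded away from both $0$ and $\lambda_+$, the relevant root of the cubic stays bounded above and below in modulus, and its imaginary part — which equals $\pi\rho_z(E)$ up to the $\eta\to 0$ limit and varies continuously — is comparable to a constant because $\rho_z$ is continuous and strictly positive on the interior of its support with the endpoints excluded by the distance hypotheses.

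Parts (2) and (3) are the square-root edge and the square-root singularity at the origin, and this is where the real work lies — part (2) I expect to be the main obstacle. Near $w=\lambda_+$, $\lambda_+$ is a simple branch point of the cubic: two roots collide there. I would set $m_c=-2/(3+\afr)+u$ and $w=\lambda_+ + s$, substitute into \eqref{eq:mc_equation}, and expand; the linear-in-$u$ term vanishes by the definition of $\lambda_+$ as the edge, leaving a relation of the form $c_2 u^2 = c_1 s + O(u^3, us)$ with $c_1,c_2$ explicit rational functions of $\afr$. Solving gives $u=\pm\sqrt{c_1/c_2}\,s^{1/2}+O(s)$, and matching the sign to $\Im m_c>0$ for $\Im w>0$ produces the coefficient $\sqrt{8(1+\afr)^3/(\afr(3+\afr)^5)}$ in \eqref{eq:prop_mc_lambda_1}. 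The case distinction for $\Im m_c$ then falls out of $\Im\bigl((E-\lambda_++i\eta)^{1/2}\bigr)$: when $|E-\lambda_+|\ge\eta$ this is $\asymp \eta/\sqrt{|E-\lambda_+|}$ for $E>\lambda_+$ and $\asymp\sqrt{|E-\lambda_+|}$ for $E<\lambda_+$, while for $|E-\lambda_+|\le\eta$ it is $\asymp\sqrt\eta$; one must check that the $O(\lambda_+-w)$ correction is genuinely subleading in each regime, which it is since $\tau$ is small. Part (3) is the analogous analysis at $w=0$: here \eqref{eq:mc_equation} degenerates because $m_c\to\infty$, so I would instead write $m_c = i(1-|z|^2)w^{-1/2}+g$ as an ansatz, plug into \eqref{eq:mc_equation}, and solve order by order in $\sqrt{w}$, reading off the constant term $(1-2|z|^2)/(2|z|^2-2)$; the factor $1-|z|^2$ being bounded away from $0$ (since $|z|\le 1-\tau$) is what keeps the leading term honest. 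The delicate points throughout are bookkeeping the uniformity of all error terms in $z$ and $w$ over the stated ranges and consistently selecting the Stieltjes branch at each branch point; once the cubic is in hand these are systematic, and much of the computation parallels \cite[Lemma~4.1]{BourYauYin}, to which I would defer for the routine algebra.
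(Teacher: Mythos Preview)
The paper does not give its own proof of this lemma; it is quoted directly from \cite[Lemma~4.1]{BourYauYin} with no argument supplied. Your sketch --- analysing the cubic \eqref{eq:mc_equation} by implicit differentiation away from the branch points and by a Puiseux expansion at $w=\lambda_+$ and $w=0$ --- is precisely the standard route and is what the cited reference does, so there is nothing to compare.

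One small slip worth noting: you assert that $\afr=\sqrt{1+8|z|^2}$ is bounded away from $1$, but the hypothesis $|z|\le 1-\tau$ allows $|z|=0$, where $\afr=1$. This does not affect your argument, since only $\lambda_+=(\afr+3)^3/(8(\afr+1))$ enters (and it stays $\sim 1$ uniformly), while $\lambda_-\le 0$ throughout the range $|z|<1$ and so plays no role in this lemma.
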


\begin{lem}\emph{(\cite[Lemma~4.2]{BourYauYin})}\label{lem:mc_properties_1}
There exists $\tau_0>0$ such that for any $\tau\leq \tau_0$ if $|z|\geq 1+\tau$ and $|w|\leq \tau^{-1}$ then the following properties concerning $m_c$ hold
\begin{itemize}
\item[(1)] 
 If $E\geq \lambda_+$ and $|w-\lambda_+|\geq \tau$, then
  \begin{equation}
    |\Re m_c|\sim 1
    ,\quad
    -\frac{1}{2}\leq \Re m_c <0
    ,\quad
    \Im m_c \sim \eta
    ,
  \end{equation}
\item[(2)] 
 If $E\leq \lambda_-$ and $|w-\lambda_-|\geq \tau$, then
  \begin{equation}
    |\Re m_c|\sim 1
    ,\quad
    0\leq \Re m_c 
    ,\quad
    \Im m_c \sim \eta
    ,
  \end{equation}
\item[(3)]
  If $|w-\lambda_{\pm}|\leq \tau$, then
  \begin{equation}\label{eq:prop_mc_lambda_2}
    m_c(z,w)
    =
    -\frac{2}{3\pm\afr}+\sqrt{\frac{8(\afr\pm 1)^{3}}{\pm\afr(\afr\pm 3)^{5}}}(w-\lambda_{\pm})^{1/2}+\OO{\lambda_{\pm}-w}
    ,
  \end{equation}
  and
  \begin{equation}
    \Im m_c
    \sim
    \left\{
      \begin{array}{ll}
        \frac{\eta}{\sqrt{|E-\lambda_{\pm}|}}&\mbox{ if } |E-\lambda_{\pm}|\geq \eta \mbox{ and } E\notin [\lambda_-,\lambda_+]
                                           ,\\
        \sqrt{\eta} & \mbox{ if } |E-\lambda_{\pm}|\leq \eta \mbox{ or } E\in [\lambda_-,\lambda_+]
                      .
      \end{array}
    \right.
  \end{equation}
\item[(4)]
  If $|w-\lambda_{\pm}|\geq \tau$ and $\lambda_-\leq E\leq \lambda_+$, then
  \begin{equation}
    |m_c|\sim 1
    ,\quad
    \Im m_c \sim 1
  \end{equation}
\end{itemize}
\end{lem}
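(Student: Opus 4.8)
The plan is to read off all four assertions from the cubic equation \eqref{eq:mc_equation} together with the description of $\rho_z$ and $m_c$ in Theorem~\ref{thm:sosh_orou_product}; since the same equation \eqref{eq:mc_equation} governs the corresponding limiting Stieltjes transform in \cite{BourYauYin}, this statement is the direct analogue of \cite[Lemma~4.2]{BourYauYin}, and all $\sim$-constants below are allowed to depend on $\tau$ and are uniform over $1+\tau\le|z|\le\tau^{-1}$, $|w|\le\tau^{-1}$. First I would put \eqref{eq:mc_equation} in polynomial form: multiplying by $m_c(1+m_c)$ yields $P(m_c;z,w):=wm_c(1+m_c)^2-(|z|^2-1)m_c+1=0$, so $m_c(z,\cdot)$ is algebraic and its only branch points are the zeros of the $w$-discriminant of $P$. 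Substituting $m=-2/(3\pm\afr)$ and checking $P=\partial_mP=0$ there identifies these branch points as $w=\lambda_\pm(z)$ of \eqref{eq:a_and_lambda}, with double root $m_c(z,\lambda_\pm)=-2/(3\pm\afr)$; here $|z|\ge1+\tau$ forces $\afr=\sqrt{1+8|z|^2}>3$, so $\lambda_-(z)>0$ and $\mathrm{supp}\,\rho_z=(\lambda_-,\lambda_+)$ lies strictly to the right of the origin.

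For the interior estimate (4) I would use that $\rho_z$, by Theorem~\ref{thm:sosh_orou_product} and \cite[Lemma~11.9]{BaiSilv}, is real-analytic, strictly positive on $(\lambda_-,\lambda_+)$ and jointly continuous in $(z,E)$, hence $\ge c_0>0$ on $\{\lambda_-+c_0\tau\le E\le\lambda_+-c_0\tau\}$. From $m_c=\int\rho_z(x)(x-w)^{-1}dx$ and the observation that $|w-\lambda_\pm|\ge\tau$ forces $E$ into this deep interior whenever $\eta$ is small (while $|x-w|\sim1$ when $\eta$ is not small), one gets $\Im m_c\sim1$, hence $|m_c|\ge\Im m_c\gtrsim1$; the matching bound $|m_c|\le C$ comes from \eqref{eq:mc_equation} itself, since $|m_c|=M\ge2$ forces $|1+m_c|\ge M/2$ and $M^{-1}=|m_c^{-1}|\ge|w|M/2-2|z|^2/M$, hence $M^2\le2(1+2|z|^2)/|w|$, which is bounded because $|w|\ge E\ge\lambda_-(z)>0$ in this regime.

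For the exterior estimates (1) and (2), again write $m_c=\int\rho_z(x)(x-w)^{-1}dx$: if $E\notin[\lambda_-,\lambda_+]$ and $|w-\lambda_\pm|\ge\tau$ then $\tau\le|x-w|\le C$ on $\mathrm{supp}\,\rho_z$, so $\Im m_c=\eta\int\rho_z|x-w|^{-2}\sim\eta$, while $\Re m_c=\int(x-E)\rho_z|x-w|^{-2}$ has the sign of $-(E-\lambda_+)$ for $E\ge\lambda_+$ (so $\Re m_c<0$) and is $\ge0$ for $E\le\lambda_-$, with $|\Re m_c|\gtrsim|\bar x_z-E|\gtrsim1$ where $\bar x_z:=\int x\rho_z(x)dx\in(\lambda_-,\lambda_+)$, and $|\Re m_c|\le C$ trivially. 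The bound $\Re m_c\ge-1/2$ for $E\ge\lambda_+$ follows from $\Re m_c\ge-\int\rho_z(x)(E-x)^{-1}dx=m_c(z,E)$ and the monotonicity of $E\mapsto m_c(z,E)$ on $(\lambda_+,\infty)$, which increases from $-2/(3+\afr)$ up to $0$, so $m_c(z,E)\in(-2/(3+\afr),0)\subset(-1/2,0)$ since $\afr>3$; the bound $\Re m_c\ge0$ for $E\le\lambda_-$ is immediate from $x-E\ge0$.

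For the edge estimate (3) I would apply Newton--Puiseux to $P$ at the double root: since $\partial_mP=0$ and $\partial_m^2P\ne0$ at $(m_c(z,\lambda_\pm),z,\lambda_\pm)$, one obtains $m_c(z,w)=m_c(z,\lambda_\pm)+\beta_\pm(z)(w-\lambda_\pm)^{1/2}+\OO{\lambda_\pm-w}$ with $\beta_\pm^2=-2\partial_wP/\partial_m^2P$ evaluated there, and an explicit computation gives $\beta_\pm^2=8(\afr\pm1)^3/(\pm\afr(\afr\pm3)^5)$, which is precisely \eqref{eq:prop_mc_lambda_2}; the branch of the square root is fixed by $\Im m_c>0$ for $\Im w>0$ (Theorem~\ref{thm:sosh_orou_product}(5)), so $\beta_+>0$ is real and $\beta_-\in i\RR_{>0}$, matching the square-root vanishing of $\rho_z$ at both edges and $\rho_z\equiv0$ just outside $[\lambda_-,\lambda_+]$. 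The stated two-sided bounds on $\Im m_c$ then reduce to the elementary asymptotics of $\Im\sqrt{w-\lambda_\pm}$ ($\sim\eta/\sqrt{|E-\lambda_\pm|}$ when $|E-\lambda_\pm|\ge\eta$ and $E\notin[\lambda_-,\lambda_+]$, and $\sim\sqrt\eta$ otherwise). The only genuine work here is the two explicit algebraic checks — that the $w$-discriminant of $P$ vanishes exactly at the $\lambda_\pm$ of \eqref{eq:a_and_lambda} with double root $-2/(3\pm\afr)$, and that $\beta_\pm^2$ equals the stated rational function of $\afr$ — together with keeping every $\sim$-constant uniform over the compact $z$-range; the remainder is the soft Stieltjes-transform analysis identical to \cite{BourYauYin}.
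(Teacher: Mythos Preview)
The paper does not prove this lemma at all: it is stated with an explicit citation to \cite[Lemma~4.2]{BourYauYin} and is used as a black box, like the neighbouring Lemmas~\ref{lem:mc_properties_0} and~\ref{lem:mc_properties_2}. So there is no ``paper's own proof'' to compare against; your proposal is strictly more than what the paper offers.

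Your sketch is the standard route and is essentially correct. The polynomial form $P(m;z,w)=wm(1+m)^2+(1-|z|^2)m+1$ is right, and identifying the branch points of $m_c$ with the zeros of the $m$-discriminant of $P$ (yielding $\lambda_\pm$ and the double root $-2/(3\pm\afr)$) is exactly how \eqref{eq:prop_mc_lambda_2} is obtained in \cite{BourYauYin}; the Newton--Puiseux computation $\beta_\pm^2=-2\,\partial_wP/\partial_m^2P$ is the clean way to get the coefficient. One small point: in your argument for $|\Re m_c|\gtrsim 1$ in cases (1)--(2), the inequality $|\Re m_c|\gtrsim|\bar x_z-E|$ is not literally true as written, since the weight $|x-w|^{-2}$ in $\Re m_c=\int(x-E)\rho_z(x)|x-w|^{-2}\,dx$ is not constant in $x$. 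The cleaner fix is simply to observe that $\Re m_c$ has a definite sign on the relevant region (strictly negative for $E\ge\lambda_+$, strictly positive for $E\le\lambda_-$), is jointly continuous in $(z,w)$, and the region $\{1+\tau\le|z|\le\tau^{-1},\ |w|\le\tau^{-1},\ |w-\lambda_\pm|\ge\tau,\ E\notin(\lambda_-,\lambda_+)\}$ is compact, so $|\Re m_c|$ attains a positive minimum depending only on $\tau$. With that adjustment your argument goes through.
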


\begin{lem}\emph{(\cite[Lemma~4.3]{BourYauYin})}\label{lem:mc_properties_2}
 There exists $\tau_0>0$ such that for any $\tau\leq \tau_0$ if either the conditions $|z|\leq 1-\tau$ and $|w|\leq \tau^{-1}$ hold or the conditions $|z|\geq 1+\tau$, $|w|\leq \tau^{-1}$, $\Re w \geq \lambda_-/5$ hold, then we have the following bounds
 \begin{EA}{ll}\EAy\label{eq:mc_equivalences}
   |m_c|\sim |1+m_c| \sim |w|^{-1/2}
   \\\EAy\label{eq:immc_bound}
   |\Im \frac{1}{w(1+m_c)}|\leq C \Im m_c
 \end{EA}
\end{lem}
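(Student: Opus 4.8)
The two estimates have rather different character: \eqref{eq:immc_bound} is a purely algebraic consequence of \eqref{eq:mc_equation} once \eqref{eq:mc_equivalences} is available, so I would establish \eqref{eq:mc_equivalences} first.

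Set $g:=1+m_c$ and clear denominators in \eqref{eq:mc_equation}, multiplying through by $m_c(1+m_c)=(g-1)g$, to get
\begin{equation*}
  wg^3-wg^2+(1-|z|^2)g+|z|^2=0,
  \qquad\text{i.e.}\qquad
  wg^2(g-1)=-(1-|z|^2)g-|z|^2 .
\end{equation*}
Since $|z|\leq\tau^{-1}$, the coefficients are bounded, and the \emph{upper} bounds $|1+m_c|,|m_c|\lesssim|w|^{-1/2}$ are immediate from the second form: if $|g|\leq 2$ then $|g|\lesssim 1\lesssim|w|^{-1/2}$ because $|w|\leq\tau^{-1}$, while if $|g|>2$ then $|g-1|\geq|g|/2$ and one reads off $|w||g|^2\lesssim 1$; and $|m_c|\leq|g|+1$. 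Rewriting the identity as $g=(wg^2(1-g)-|z|^2)/(1-|z|^2)$ when $|z|\leq 1-\tau$ also yields the dichotomy: either $|g|$ is bounded by a constant, or $|g|\gtrsim|w|^{-1/2}$; so the \emph{lower} bound $|1+m_c|\gtrsim|w|^{-1/2}$ (which with the upper bound also gives $|m_c|\sim|1+m_c|$) comes down to excluding the bounded alternative. For this I would go through the regions of Lemmas~\ref{lem:mc_properties_0} and \ref{lem:mc_properties_1}. For $|z|\leq 1-\tau$: if $|w|$ is below a small constant, part~(3) of Lemma~\ref{lem:mc_properties_0} gives $m_c=\sqrt{-1}(1-|z|^2)w^{-1/2}+\OO{1}$ with $1-|z|^2\sim 1$, and the leading term lies in the open first quadrant for $w\in\CC_+$, so it cannot be cancelled by the bounded remainder and $|m_c|\sim|1+m_c|\sim|w|^{-1/2}$; if $|w|$ is bounded away from $0$ then $|w|\sim 1$, and parts~(1), (2), (4) each give $|m_c|\sim 1$ together with enough information on $\Re m_c$ or $\Im m_c$ (e.g. $\Re m_c\in[-1/2,0)$ in~(1), $\Im m_c\sim 1$ in~(4)) to deduce $|1+m_c|\sim 1$. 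For $|z|\geq 1+\tau$ and $\Re w\geq\lambda_-/5$ one has $\lambda_-/5\leq|w|\leq\tau^{-1}$, hence $|w|\sim 1$, and the four cases of Lemma~\ref{lem:mc_properties_1} are handled identically; the hypothesis $\Re w\geq\lambda_-/5$ is used only to keep $w$ away from $0$, where for $|z|>1$ the support of $\nu_z$ is bounded away from $0$, $m_c$ stays bounded, and the asserted $|w|^{-1/2}$–growth would fail.

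For \eqref{eq:immc_bound}, take imaginary parts in \eqref{eq:mc_equation}. Using $\Im(m_c^{-1})=-\Im m_c/|m_c|^2$, $\Im((1+m_c)^{-1})=-\Im m_c/|1+m_c|^2$ and $\Im(1+m_c)=\Im m_c$, one finds
\begin{equation*}
  \Im\bigl(w(1+m_c)\bigr)=\Im m_c\left(\frac{1}{|m_c|^2}-\frac{|z|^2}{|1+m_c|^2}\right),
\end{equation*}
hence (since $\Im m_c\geq0$)
\begin{equation*}
  \left|\Im\frac{1}{w(1+m_c)}\right|
  =\frac{\bigl|\,|m_c|^{-2}-|z|^2|1+m_c|^{-2}\,\bigr|}{|w|^2|1+m_c|^2}\,\Im m_c .
\end{equation*}
By \eqref{eq:mc_equivalences} the numerator is $\lesssim|w|$ (using $|z|\leq\tau^{-1}$) and the denominator is $\sim|w|^2\cdot|w|^{-1}=|w|$, so the prefactor is $\OO{1}$ and \eqref{eq:immc_bound} follows.

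The bulk of the work is the case analysis behind the lower bound in \eqref{eq:mc_equivalences}: one must run through the overlapping subregions of Lemmas~\ref{lem:mc_properties_0}--\ref{lem:mc_properties_1}, verify both comparisons in each, and check that the implied constants glue together uniformly in $w$ across the transition zones $|w-\lambda_\pm|\sim\tau$ and $|w|\sim\tau$. The one genuinely delicate point is the near-origin regime $|z|\leq 1-\tau$, $|w|\to 0$, where the $|w|^{-1/2}$–blow-up is invisible to the cubic alone — which also has a bounded root — and must be extracted from Lemma~\ref{lem:mc_properties_0}(3); everything else is routine.
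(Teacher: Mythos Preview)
The paper does not give its own proof of this lemma: it is stated with a citation to \cite[Lemma~4.3]{BourYauYin} and no argument follows. So there is nothing in the present paper to compare your proposal against beyond the fact that the authors outsource the proof entirely.

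Your argument is sound and is essentially the natural one. The derivation of \eqref{eq:immc_bound} from \eqref{eq:mc_equivalences} by taking imaginary parts of \eqref{eq:mc_equation} is clean and correct. For \eqref{eq:mc_equivalences}, your split into an algebraic upper bound from the cubic and a region-by-region lower bound via Lemmas~\ref{lem:mc_properties_0}--\ref{lem:mc_properties_1} is exactly how this is typically done, and your identification of the only delicate point (the near-origin regime for $|z|\leq 1-\tau$, where the correct branch must be picked out by Lemma~\ref{lem:mc_properties_0}(3) rather than by the cubic alone) is right. One small remark: you invoke $|z|\leq\tau^{-1}$ in a couple of places, but the lemma as stated imposes no upper bound on $|z|$ in the case $|z|\geq 1+\tau$. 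In the context of this paper that bound is always in force (via the set $\Zfr_\tau$), so this is harmless here, but strictly speaking you should either add it as a hypothesis or note that the constants in \eqref{eq:mc_equivalences}--\eqref{eq:immc_bound} may depend on an upper bound for $|z|$.
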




\subsection{McDiarmid's Concentration Inequality}

\begin{thm}\emph{(\cite{Mcdi})}
Let $U=(u_1,\ldots,u_N)$ be a family of independent random variables taking values in the set $A$.
Suppose that the real-valued function $f:A^N\rightarrow \RR$ satisfies
\begin{equation}\label{eq:mcdiarmid_condition}
  |f(u)-f(u')|\leq c_k
\end{equation}
if the vectors $u$ and $u'$ differs only in $k$th coordinate.
Then for any $t\geq 0$
\begin{equation*}
  \Pr{|f(U)-\Me{f(U)}|\geq t}
  \leq
  2 e^{-2t^2/\sum c_k^2}
  .
\end{equation*}
\end{thm}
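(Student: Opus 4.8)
The plan is to run the standard Doob-martingale (Azuma--Hoeffding) argument. Fix $t\geq 0$, write $\sigma^2:=\sum_{k=1}^{N}c_k^2$, and introduce the Doob martingale of $f(U)$: for $0\leq k\leq N$ set $Z_k:=\E[f(U)\,|\,u_1,\ldots,u_k]$, so that $Z_0=\E[f(U)]$ and $Z_N=f(U)$. Let $D_k:=Z_k-Z_{k-1}$ be the martingale increments; by construction $\E[D_k\,|\,u_1,\ldots,u_{k-1}]=0$.

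The key step is to show that, conditionally on $u_1,\ldots,u_{k-1}$, the increment $D_k$ takes values in an interval of length at most $c_k$. For fixed $u_1,\ldots,u_{k-1}$ and any candidate value $v$ for the $k$th coordinate, let
\[
  g_k(v):=\E\bigl[f(u_1,\ldots,u_{k-1},v,u_{k+1},\ldots,u_N)\bigr],
\]
the expectation being taken over the independent variables $u_{k+1},\ldots,u_N$ only. By independence one has $Z_k=g_k(u_k)$ and $Z_{k-1}=\E[g_k(u_k)\,|\,u_1,\ldots,u_{k-1}]$, so $D_k=g_k(u_k)-\E g_k(u_k)$ lies between $\inf_v g_k(v)-\E g_k(u_k)$ and $\sup_v g_k(v)-\E g_k(u_k)$. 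Moreover, for any two values $v,v'$, coupling the later coordinates $u_{k+1},\ldots,u_N$ to be identical and applying the bounded differences hypothesis \eqref{eq:mcdiarmid_condition} pointwise gives $|g_k(v)-g_k(v')|\leq c_k$; hence $\sup_v g_k(v)-\inf_v g_k(v)\leq c_k$ and $D_k$ ranges over an interval of width at most $c_k$.

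Next I would invoke Hoeffding's lemma: a random variable $W$ with $\E W=0$ supported in an interval of length $\ell$ satisfies $\E e^{\lambda W}\leq e^{\lambda^2\ell^2/8}$ for all $\lambda\in\RR$ (proved by bounding $\log\E e^{\lambda W}$ via convexity of $x\mapsto e^{\lambda x}$ together with a second-order Taylor estimate). Applying it conditionally to $D_k$ yields, for every $\lambda>0$,
\[
  \E\bigl[e^{\lambda D_k}\,\big|\,u_1,\ldots,u_{k-1}\bigr]\leq e^{\lambda^2 c_k^2/8}.
\]
Since $f(U)-\E f(U)=Z_N-Z_0=\sum_{k=1}^{N}D_k$, conditioning successively from $k=N$ down to $k=1$ and using the tower property gives $\E\,e^{\lambda(f(U)-\E f(U))}\leq e^{\lambda^2\sigma^2/8}$.

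Finally, Markov's inequality gives $\Pr{f(U)-\E f(U)\geq t}\leq e^{-\lambda t+\lambda^2\sigma^2/8}$ for every $\lambda>0$; choosing $\lambda=4t/\sigma^2$ produces the bound $e^{-2t^2/\sigma^2}$ for the upper tail. The function $-f$ satisfies the same bounded differences condition \eqref{eq:mcdiarmid_condition}, so the identical argument bounds $\Pr{f(U)-\E f(U)\leq -t}$ by $e^{-2t^2/\sigma^2}$ as well, and a union bound over the two tails yields the factor $2$ in the statement. The only genuinely delicate point is the conditional boundedness of the increments $D_k$: one must use that changing $u_k$ leaves the (independent) conditional law of $u_{k+1},\ldots,u_N$ unchanged, so that a single coupling of the later coordinates simultaneously controls $g_k(v)-g_k(v')$ uniformly; the remaining steps are Chernoff's method combined with Hoeffding's lemma.
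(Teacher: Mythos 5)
Your proof is correct. The paper states this result without proof, simply citing McDiarmid's original article, and your argument — the Doob martingale decomposition, the conditional bounded-increment estimate via independence of the later coordinates, Hoeffding's lemma, Chernoff's method with $\lambda=4t/\sigma^2$, and the two-sided union bound — is precisely the classical bounded-differences proof from that reference, with the one genuinely delicate point (that changing $u_k$ does not alter the conditional law of $u_{k+1},\ldots,u_N$) correctly identified and handled.
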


\subsection{Abstract Decoupling Lemma}

\begin{thm} \emph{(Abstract decoupling lemma, \cite[Lemma~7.3]{PillYin})}\label{thm:abs_dec}
    Let $\mathcal{I}$ be a finite set which may depend on $N$ and let $\mathcal{I}_i\subset\mathcal{I}, 1\leq i\leq N$.  Let $\{x_{\alpha},\alpha\in\mathcal{I}\}$ be a collection of independent random variables and $S_1,\ldots,S_N$ be random variables which are functions of $\{x_{\alpha},\alpha\in\mathcal{I}\}$.
  Let $\mathbb{E}_i$ denote the expectation value operator with respect to $\{x_{\alpha},\alpha\in\mathcal{I}_i\}$.
  Define the commuting projection operators
  \begin{equation*}
    \QQ_i=1-\mathbb{E}_i,P_i=\mathbb{E}_i, P_i^2=P_i, \QQ_i^2=\QQ_i, [\QQ_i,P_j]=[P_i,P_j]=[\QQ_i,\QQ_j]=0
  \end{equation*}
  and for $\AA\subset\{1,2,\ldots,N\}$,
  \begin{equation*}
    \QQ_{\AA}:=\prod_{i\in \AA}\QQ_i, P_{\AA}:=\prod_{i\in \AA}P_i
  \end{equation*}
  We use the notation
  \begin{equation*}
    [\QQ S]=\frac{1}{N}\sum_{i=1}^{N}\QQ_iS_i
  \end{equation*}

  Let $\Xi$ be an event and $p$ an even integer, which may depend on $N$. Suppose the following assumptions hold with some constants $C_0, c_0>0$.
  \begin{itemize}
  \item[(i)]  There exist deterministic positive numbers $\mathcal{X}<1$ and $\mathcal{Y}$ such that for any set $\AA\subset\{1,2,\ldots,N\}$ with $i\in \AA$ and $|\AA|\leq p, \QQ_{\AA}S_i$ in $\Xi$ can be written as the sum of two new random variables:
  \begin{equation}\label{eq:abs_dec_cond_i}
      1(\Xi)(\QQ_{\AA}S_i)=S_{i,\AA}+1(\Xi)\QQ_{\AA} 1(\Xi^{\complement})\tilde{S}_{i,\AA}
  \end{equation}
  and
  \begin{equation*}
    |S_{i,\AA}|\leq \mathcal{Y}(C_0\mathcal{X}|\AA|)^{|\AA|}, |\tilde{S}_{i,A}|\leq \mathcal{Y}N^{C_0|\AA|}
    ;
  \end{equation*}
  \item[(ii)] 
    \begin{equation}\label{eq:abs_dec_cond_ii}
      \max_{i}|S_i|\leq\mathcal{Y}N^{C_0}
      ;
    \end{equation}
  \item[(iii)] 
    \begin{equation}\label{eq:abs_dec_cond_iii}
      \Pb[\Xi^c]\leq e^{-c_0(\log N)^{3/2}p}
      .
    \end{equation}
  \end{itemize}
  Then, under the assumptions (i), (ii), (iii) above, we have
  \begin{equation*}
    \E[\QQ S]^p\leq (Cp)^{4p}[\mathcal{X}^{2}+N^{-1}]^{p}\mathcal{Y}^p
  \end{equation*}
  for some $C>0$ and any sufficiently large $N$.
\end{thm}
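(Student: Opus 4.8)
\emph{Proof strategy.} This is the abstract ``fluctuation averaging'' estimate of Pillai--Yin, and the plan is to prove it by the moment method. First I would expand the $p$-th moment as
\begin{equation*}
  \E\left|[\QQ S]\right|^p
  =
  \frac{1}{N^p}\sum_{i_1,\ldots,i_p=1}^N \E\left[\prod_{k=1}^{p/2}\QQ_{i_k}S_{i_k}\prod_{k=p/2+1}^{p}\overline{\QQ_{i_k}S_{i_k}}\right]
\end{equation*}
and dispose of the contribution of $\Xi^{\complement}$ immediately: there $\left|[\QQ S]\right|^p\leq(2\mathcal{Y}N^{C_0})^p$ by (ii), and multiplying by $\Pb[\Xi^{\complement}]\leq e^{-c_0(\log N)^{3/2}p}$ from (iii) produces a quantity far below $\mathcal{Y}^pN^{-p}$ for large $N$, hence negligible against the claimed bound. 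It then remains to estimate $1(\Xi)$ times each term of the sum.

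For a fixed tuple $\mathbf{i}=(i_1,\ldots,i_p)$ let $\mathcal{J}(\mathbf{i})\subseteq\{1,\ldots,N\}$ be the set of distinct indices it contains, $s:=|\mathcal{J}(\mathbf{i})|$. Inserting $1=\prod_{j\in\mathcal{J}(\mathbf{i})}(P_j+\QQ_j)$ into each factor and using $\QQ_j^2=\QQ_j$, $\QQ_jP_j=0$, $[P_i,\QQ_j]=0$, one rewrites $\QQ_{i_k}S_{i_k}=\sum_{\AA_k}\QQ_{\AA_k}P_{\mathcal{J}(\mathbf{i})\setminus \AA_k}S_{i_k}$, the sum running over $\AA_k\subseteq\mathcal{J}(\mathbf{i})$ with $i_k\in\AA_k$. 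The key algebraic fact is then a \emph{vanishing lemma}: $\E[\prod_k\QQ_{\AA_k}P_{\mathcal{J}(\mathbf{i})\setminus\AA_k}S_{i_k}]=0$ unless every $j\in\mathcal{J}(\mathbf{i})$ lies in at least two of the sets $\AA_1,\ldots,\AA_p$. Indeed, if some $j_0$ lies in exactly one $\AA_{k_0}$, then all the other factors carry a $P_{j_0}$ and so do not depend on $x_{j_0}$, whence they factor out of $\E_{j_0}$, which then annihilates the remaining factor because $\E_{j_0}\QQ_{j_0}=\E_{j_0}(1-\E_{j_0})=0$. Hence only tuples with $s\leq p/2$ survive, and for them every surviving configuration satisfies $\sum_k|\AA_k|\geq 2s$.

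For the surviving configurations I would bound each factor on $\Xi$ using (i): split $1(\Xi)(\QQ_{\AA_k}S_{i_k})$ into its main part $S_{i_k,\AA_k}$, bounded deterministically by $\mathcal{Y}(C_0\mathcal{X}|\AA_k|)^{|\AA_k|}$, and the remainder $1(\Xi)\QQ_{\AA_k}1(\Xi^{\complement})\tilde S_{i_k,\AA_k}$, the extra projections $P_{\mathcal{J}(\mathbf{i})\setminus\AA_k}$ (partial expectations in other variables) doing no harm. A term in which every factor is of main type is then bounded by $\mathcal{Y}^p\prod_k(C_0\mathcal{X}|\AA_k|)^{|\AA_k|}$, while a term containing at least one remainder factor is handled with (ii) for the remaining factors together with the superpolynomially small $\Pb[\Xi^{\complement}]$ from (iii), and is again negligible. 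This reduces the whole estimate to
\begin{equation*}
  \E\left|[\QQ S]\right|^p
  \leq
  \frac{\mathcal{Y}^p}{N^p}\sum_{s=1}^{\lfloor p/2\rfloor}\;\sum_{\mathbf{i}:\,|\mathcal{J}(\mathbf{i})|=s}\;\sum_{(\AA_k)}^{\ast}\;\prod_{k=1}^{p}(C_0\mathcal{X}|\AA_k|)^{|\AA_k|}\;+\;(\text{negligible}),
\end{equation*}
where $\sum^{\ast}$ runs over $(\AA_k)$ with $i_k\in\AA_k$ and each element of $\mathcal{J}(\mathbf{i})$ covered at least twice.

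The heart of the argument, and the step I expect to be the main obstacle, is the bookkeeping of this triple sum. Grouping the tuples $\mathbf{i}$ by the set partition of $\{1,\ldots,p\}$ they induce (at most $p^p$ of them) and by the $\leq N^s$ choices of the $s$ distinct values (which cancels $N^{-s}$ of the prefactor), one is left for each block structure with $\sum_{(\AA_k)}^{\ast}\prod_k(C_0\mathcal{X}|\AA_k|)^{|\AA_k|}$. Since $\mathcal{X}<1$ and $\sum_k|\AA_k|\geq 2s$, every admissible product carries at least the factor $\mathcal{X}^{2s}$; the residual weights $(C_0|\AA_k|)^{|\AA_k|}\leq(C_0p)^{|\AA_k|}$, the geometric decay of $l\mapsto(C_0\mathcal{X}l)^{l}$ for $l$ beyond $(C_0\mathcal{X})^{-1}$, and the restriction $|\AA_k|\leq s\leq p/2$ allow the sum over all admissible configurations to be absorbed into a factor $(Cp)^{4p}$. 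What is left is $N^{s-p}\mathcal{X}^{2s}$, which is (up to the binomial coefficient, itself absorbed) the $j=s$ term of $[\mathcal{X}^2+N^{-1}]^p=\sum_{j=0}^{p}\binom{p}{j}\mathcal{X}^{2j}N^{j-p}$; summing over $s\leq p/2$ yields $\E|[\QQ S]|^p\leq(Cp)^{4p}[\mathcal{X}^2+N^{-1}]^p\mathcal{Y}^p$. The delicate points are precisely that the error contributions coming from the $1(\Xi^{\complement})$ parts must be shown to be genuinely negligible, and that the configuration sum must be kept from generating more than $(Cp)^{4p}$.
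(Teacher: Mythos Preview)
The paper does not give its own proof of this statement: it is quoted verbatim as the Abstract decoupling lemma from \cite[Lemma~7.3]{PillYin} and used as a black box in Section~5.3. So there is nothing in the paper to compare your attempt against.

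That said, your outline is the standard moment--combinatorics argument behind this lemma (as in Pillai--Yin and the closely related fluctuation-averaging lemmas of Erd\H{o}s--Yau--Yin): expand the $p$-th moment, insert $1=\prod_{j\in\mathcal{J}(\mathbf{i})}(P_j+\QQ_j)$, use the vanishing mechanism $\E_j\QQ_j=0$ to force each distinct index to be covered at least twice (hence $s\le p/2$ and $\sum_k|\AA_k|\ge 2s$), bound the surviving factors on $\Xi$ via assumption~(i), and absorb the remaining combinatorics into $(Cp)^{4p}$. The two places you flag as delicate are exactly the ones that require care in the literature: controlling the $1(\Xi)\QQ_{\AA}1(\Xi^{\complement})\tilde S_{i,\AA}$ remainders (one has to expand $\QQ_{\AA}$ as an alternating sum of partial expectations and use H\"older together with (ii), (iii) so that the gain $e^{-c_0(\log N)^{3/2}p}$ beats the polynomial losses $N^{C_0|\AA|}$), and keeping the configuration count under $(Cp)^{4p}$ (this is where the constraint $|\AA_k|\le s\le p$ and the factor $(C_0|\AA_k|)^{|\AA_k|}$ rather than $(C_0p)^{|\AA_k|}$ matter). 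Your sketch is correct at the level of strategy; filling in those two bookkeeping steps carefully is precisely what the full proof in \cite{PillYin} does.
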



\section{Concentration of the Stieltjes transform}


\subsection{System of ``self-consistent equations''}
The aim of this section is to prove Theorem~\ref{thm:sce}. 
We begin with three independent lemmas.
\begin{lem}\label{lem:sce_schur}
  For any $\TT,\UU\subset \{1,\ldots,nN\}$ and $i,j\in\{1,\ldots,nN\}\setminus\TT,i\neq j$, we have
  \begin{EA}{c}\EAy\label{eq:sce_schur_1}
    \frac{1}{G_{ii}^{(\TT,\UU)}}
    =
    -w(1+\yb_i^*\Gc^{(\TT i,\UU)}\yb_i)
    ,\quad
    G^{(\TT,\UU)}_{ij}
    =    
    -wG^{(\TT,\UU)}_{ii}G^{(\TT i,\UU)}_{jj}\left(\yb_i^*\Gc^{(\TT ij,\UU)}\yb_j\right)
    ,\\ \EAy \label{eq:sce_schur_2}
    \frac{1}{\Gc_{ii}^{(\UU,\TT)}}
    =
    -w(1+y_iG^{(\UU,\TT i)}y_i^*)   
    ,\quad
    \Gc^{(\UU,\TT)}_{ij}
    =
    -w\Gc^{(\UU,\TT)}_{ii}\Gc^{(\UU ,\TT i)}_{jj}\left(y_iG^{(\UU,\TT ij)}y_j^*\right)
    .
  \end{EA}
\end{lem}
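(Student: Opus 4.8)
The plan is to prove Lemma~\ref{lem:sce_schur} by applying the Schur complement formula block by block, treating the matrix $Y_z^*Y_z-w$ (restricted to the appropriate minor) as the matrix $A$ in \eqref{eq:schur_complement} and isolating the row and column indexed by $i$ (resp. the pair $i,j$). Concretely, to obtain the first identity in \eqref{eq:sce_schur_1}, I would write $A = Y_z^{(\TT,\UU)*}Y_z^{(\TT,\UU)}-w$, single out the index $i$, and note that $\left[G^{(\TT,\UU)}_{ii}\right]^{-1} = A_{ii} - A_{i\cdot}\left[A^{(i)}\right]^{-1}A_{\cdot i}$ where $A^{(i)}$ is the corresponding principal minor. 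Here $A_{ii} = \|\yb_i\|^2 - w$, the off-diagonal row $A_{i\cdot}$ equals $\yb_i^* Y_z^{(\TT i,\UU)}$, and $A^{(i)} = Y_z^{(\TT i,\UU)*}Y_z^{(\TT i,\UU)}-w$. Thus
\begin{equation*}
  \frac{1}{G^{(\TT,\UU)}_{ii}}
  =
  \|\yb_i\|^2 - w - \yb_i^* Y_z^{(\TT i,\UU)}\left(Y_z^{(\TT i,\UU)*}Y_z^{(\TT i,\UU)}-w\right)^{-1}Y_z^{(\TT i,\UU)*}\yb_i
  .
\end{equation*}
Now I would invoke the second lemma of Section~\ref{sec:linear_algebra} (the Woodbury-type identity $A(A^*A-w)^{-1}A^* = I + w(AA^*-w)^{-1}$) with $A = Y_z^{(\TT i,\UU)*}$, which converts the middle term into $\yb_i^*\left(I + w\,\Gc^{(\TT i,\UU)}\right)\yb_i = \|\yb_i\|^2 + w\,\yb_i^*\Gc^{(\TT i,\UU)}\yb_i$. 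Substituting, the $\|\yb_i\|^2$ terms cancel and we are left with $\left[G^{(\TT,\UU)}_{ii}\right]^{-1} = -w\left(1 + \yb_i^*\Gc^{(\TT i,\UU)}\yb_i\right)$, as claimed. The identity in \eqref{eq:sce_schur_2} is obtained the same way, exchanging the roles of rows and columns (i.e. working with $Y_zY_z^*$ instead of $Y_z^*Y_z$, so that one deletes a row and the vector $y_i$ replaces $\yb_i$).

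For the off-diagonal formula $G^{(\TT,\UU)}_{ij} = -w\,G^{(\TT,\UU)}_{ii}G^{(\TT i,\UU)}_{jj}\left(\yb_i^*\Gc^{(\TT ij,\UU)}\yb_j\right)$, I would use a $2\times 2$ block version of Schur: single out the two indices $i,j$ together, so that the top-left $2\times 2$ block of $G^{(\TT,\UU)}$ is the inverse of
\begin{equation*}
  \begin{pmatrix} \|\yb_i\|^2-w & \yb_i^*\yb_j \\ \yb_j^*\yb_i & \|\yb_j\|^2-w \end{pmatrix}
  - \begin{pmatrix} \yb_i^* \\ \yb_j^* \end{pmatrix} Y_z^{(\TT ij,\UU)}\left(Y_z^{(\TT ij,\UU)*}Y_z^{(\TT ij,\UU)}-w\right)^{-1}Y_z^{(\TT ij,\UU)*}\begin{pmatrix}\yb_i & \yb_j\end{pmatrix}
  .
\end{equation*}
Applying the Woodbury identity to the middle factor turns this $2\times 2$ matrix into $-w\begin{pmatrix} 1+\yb_i^*\Gc^{(\TT ij,\UU)}\yb_i & \yb_i^*\Gc^{(\TT ij,\UU)}\yb_j \\ \yb_j^*\Gc^{(\TT ij,\UU)}\yb_i & 1+\yb_j^*\Gc^{(\TT ij,\UU)}\yb_j \end{pmatrix}$, and inverting a $2\times 2$ matrix explicitly, the $(i,j)$ entry of its inverse is $-(1/w)$ times the $(1,2)$ cofactor over the determinant. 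One then recognizes, using the first identity applied to the minors $(\TT,\UU)$ and $(\TT i,\UU)$, that $G^{(\TT,\UU)}_{ii}$ and $G^{(\TT i,\UU)}_{jj}$ are precisely the reciprocals of the relevant diagonal combinations, and a short bookkeeping computation (the standard trick relating a $2\times2$ inverse entry to the product of two successive diagonal resolvent entries, exactly as in \cite[Lemma~6.3 and its proof]{BourYauYin}) yields the stated product form. The formulas \eqref{eq:sce_schur_2} for $\Gc$ follow verbatim by transposing.

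The only genuinely delicate point — and the main thing to be careful about — is the bookkeeping of which minors appear: one must track that deleting column $i$ from $Y_z$ corresponds to the superscript $(\TT i,\UU)$ on $G$ but acts on $\Gc$ by changing the column set, and correspondingly for deleting a row; getting the superscripts $(\TT i,\UU)$ versus $(\TT ij,\UU)$ right in the off-diagonal identity requires applying Schur first with respect to $j$ and then with respect to $i$ (or the symmetric order) and checking consistency. There is no analytic difficulty here: everything is a finite-dimensional identity valid whenever $Y_z^{(\cdot,\cdot)*}Y_z^{(\cdot,\cdot)}-w$ is invertible, which holds for $w\in\CC_+$. I would present the diagonal case in full and remark that the off-diagonal case is the two-index analogue, following \cite{BourYauYin}.
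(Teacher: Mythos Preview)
Your proposal is correct and follows exactly the standard Schur complement/Woodbury route that the paper defers to via its citation of \cite[Lemma~6.5]{BourYauYin}; the paper gives no argument of its own beyond that reference, so your write-up is in fact more detailed than what appears there. One small slip: in the Woodbury step you should take $A=Y_z^{(\TT i,\UU)}$ (not its adjoint) so that $AA^*=Y_z^{(\TT i,\UU)}Y_z^{(\TT i,\UU)*}$ and $(AA^*-w)^{-1}=\Gc^{(\TT i,\UU)}$ as required.
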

\begin{proof}
  See \cite[Lemma~6.5]{BourYauYin}.
\end{proof}

Define  subsets of $\CC$
\begin{EA}{l}
  \Zfr_{\tau}
  :=
  \{|z|\leq 1-\tau\}\cup\{1+\tau \leq |z|\leq \tau^{-1}\}
  ,\\
  S_{z,\delta,0}
  :=
  \{w\in\CC_+\,:\,  w=E+\sqrt{-1}\eta,\, \max \{0,(1+\delta)^{-1}\lambda_-(z)\}\leq E\leq(1+\delta) \lambda_+(z),\, 0\leq\eta\leq 1\}
  .
\end{EA}

Next two lemmas are technical results which provide means to make manipulations with functions that approximate $m_c(z,w)$ easier.
Similar results but for different values of $z$ and $w$ were proven in \cite[Lemmas 13 and 14]{NemiNHPO}.
By tracking the changes in the behaviour of $m_c$ in different regions of $z$ and $w$ (see lemmas \ref{lem:mc_properties_0} and \ref{lem:mc_properties_1}), the arguments can be adapted without difficulties to the new setting, therefore we state these lemmas without proof.
\begin{lem}\label{lem:prop_of_mc_close_functions}
There exist $\alpha>0$ small enough and $C>0$, such that for any $h_i:\CC\times\CC_+\rightarrow \CC,i\in\{1,2\}$, for all $z\in\Zfr_{\tau}$ and $w\in S_{z,1,0}$ if
  \begin{equation*}
    \max_{i\in\{1,2\}}|h_i(z,w)-m_c(z,w)|\leq 2\alpha |m_c(z,w)|
  \end{equation*}
holds, then
  \begin{EA}{rll}\EAy\label{eq:prop_of_mc_close_functions_1}
   (i) & \quad &\left| 1+h_1(z,w)\right| 
   \sim_C
   |h_1(z,w)|
   \sim_C
   |w|^{-1/2}
   ,\\ \EAy\label{eq:prop_of_mc_close_functions_2}
   (ii) & \quad &\left|\Im \frac{1}{w(1+h_1(z,w))}\right| 
   \leq
   C\left(\Im m_c(z,w)+|h_1(z,w)-m_c(z,w)|\right)
   , \\ \EAy\label{eq:prop_of_mc_close_functions_3}
   (iii) & \quad &\left|(1+h_1(z,w))-\frac{|z|^2}{w(1+h_2(z,w))}+\frac{1}{wm_c(z,w)}\right|
   \leq
   C\max_i|h_i(z,w)-m_c(z,w)|.
  \end{EA}
\end{lem}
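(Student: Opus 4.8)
\textbf{Proof plan for Lemma~\ref{lem:prop_of_mc_close_functions}.}

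The plan is to deduce everything from the already-recorded properties of $m_c$ in Lemmas~\ref{lem:mc_properties_0}, \ref{lem:mc_properties_1} and \ref{lem:mc_properties_2} together with the defining equation \eqref{eq:mc_equation}, by treating $h_1$ and $h_2$ as perturbations of $m_c$ of relative size at most $2\alpha$. The key point that makes all three estimates work is that, on $\Zfr_\tau\times S_{z,1,0}$, Lemma~\ref{lem:mc_properties_2} (combined with Lemmas~\ref{lem:mc_properties_0}(2) and \ref{lem:mc_properties_1}(3) near the edges, where one checks directly from the square-root expansions \eqref{eq:prop_mc_lambda_1}, \eqref{eq:prop_mc_lambda_2} that $|m_c|\sim|1+m_c|\sim|w|^{-1/2}$ still holds uniformly) gives $|m_c|\sim|1+m_c|\sim|w|^{-1/2}$ and $|\Im\frac{1}{w(1+m_c)}|\le C\Im m_c$ throughout the relevant region. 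In particular $|1+m_c|$ is bounded away from $0$ by a constant multiple of $|w|^{-1/2}\ge c$ (since $|w|\le\tau^{-1}$ on $S_{z,1,0}$, because $E\le 2\lambda_+(z)\le C\tau^{-1}$ and $\eta\le 1$), so a perturbation of size $2\alpha|m_c|\le C\alpha|w|^{-1/2}$ with $\alpha$ small cannot move $1+h_i$ across zero.

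First I would prove (i): write $1+h_1=(1+m_c)+(h_1-m_c)$ and use $|h_1-m_c|\le 2\alpha|m_c|\sim\alpha|1+m_c|$; choosing $\alpha$ small enough that $2\alpha$ times the implicit constant from $|m_c|\sim|1+m_c|$ is at most $1/2$ gives $|1+h_1|\sim|1+m_c|\sim|w|^{-1/2}$ and $|h_1|\sim|m_c|\sim|w|^{-1/2}$ by the triangle inequality in both directions. For (ii), use
\begin{equation*}
  \frac{1}{w(1+h_1)}-\frac{1}{w(1+m_c)}
  =
  \frac{m_c-h_1}{w(1+h_1)(1+m_c)},
\end{equation*}
bound the denominator below using (i) (it is $\sim|w|\cdot|w|^{-1/2}\cdot|w|^{-1/2}=|w|\cdot|w|^{-1}$, i.e.\ $\sim 1$; more carefully $|w(1+h_1)(1+m_c)|\ge c|w|\cdot|w|^{-1}=c$), so the difference is $O(|h_1-m_c|)$, and then add and subtract $\frac{1}{w(1+m_c)}$ and invoke \eqref{eq:immc_bound} (equivalently Lemma~\ref{lem:mc_properties_2}) to bound $|\Im\frac{1}{w(1+m_c)}|\le C\Im m_c$, yielding (ii). For (iii), start from the exact identity obtained by rewriting \eqref{eq:mc_equation} as
\begin{equation*}
  (1+m_c)-\frac{|z|^2}{w(1+m_c)}+\frac{1}{w m_c}=0,
\end{equation*}
subtract this from the left-hand side of \eqref{eq:prop_of_mc_close_functions_3}, and estimate the three resulting differences termwise: $|(1+h_1)-(1+m_c)|=|h_1-m_c|$; $|\frac{|z|^2}{w(1+h_2)}-\frac{|z|^2}{w(1+m_c)}|\le C|z|^2|h_2-m_c|$ using the lower bounds from (i) for both $|1+h_2|$ and $|1+m_c|$ and $|z|\le\tau^{-1}$; and there is no third term since $\frac{1}{wm_c}$ appears unchanged. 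This gives the bound $C\max_i|h_i-m_c|$.

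The main obstacle is not any single estimate but the verification that the ``$\sim$'' relations $|m_c|\sim|1+m_c|\sim|w|^{-1/2}$ and $|\Im\frac{1}{w(1+m_c)}|\le C\Im m_c$ hold \emph{uniformly} over the whole parameter set $\Zfr_\tau\times S_{z,1,0}$ with constants independent of $z$ and $w$ — in particular near the spectral edges $w=\lambda_\pm(z)$ and (in the regime $|z|\le 1-\tau$) near $w=0$, where Lemma~\ref{lem:mc_properties_2} as stated requires $\Re w\ge\lambda_-/5$ in the $|z|\ge 1+\tau$ case and where $m_c$ has square-root or $w^{-1/2}$ behaviour. One has to patch together the edge expansions \eqref{eq:prop_mc_lambda_1}--\eqref{eq:prop_mc_lambda_2} and the small-$w$ expansion \eqref{eq:prop_mc_zero} with the bulk bounds from Lemmas~\ref{lem:mc_properties_0}(1),(4) and \ref{lem:mc_properties_1}(1),(2),(4), checking in each subregion that both claimed equivalences survive; this is precisely the bookkeeping that the analogous \cite[Lemmas~13 and 14]{NemiNHPO} carry out for other ranges of the parameters, and by the remark preceding the statement it transfers to the present setting, which is why we omit the routine details.
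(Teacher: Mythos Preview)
Your proposal is correct and follows exactly the route the paper indicates: the paper does not actually prove this lemma but refers to \cite[Lemmas~13 and~14]{NemiNHPO} and says the argument carries over once one tracks the behaviour of $m_c$ via Lemmas~\ref{lem:mc_properties_0}--\ref{lem:mc_properties_2}, which is precisely what you do. One small remark: on $S_{z,1,0}$ with $|z|\ge 1+\tau$ one has $E\ge \tfrac12\lambda_->\lambda_-/5$, so the hypothesis of Lemma~\ref{lem:mc_properties_2} is automatically met and no separate patching is needed there; the only genuine case splitting is near $w=0$ (for $|z|\le 1-\tau$) and near $w=\lambda_\pm$, where the explicit expansions \eqref{eq:prop_mc_zero}, \eqref{eq:prop_mc_lambda_1}, \eqref{eq:prop_mc_lambda_2} directly give $|m_c|\sim|1+m_c|\sim|w|^{-1/2}$.
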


\begin{lem}\label{lem:lde}
Let $\zeta>0$. 
Then there exists $Q_{\zeta}>0$ such that for all sufficiently large $N$, for any $\TT,\UU\subset \{1,\ldots,nN\}$, for any $a\in\ZZ/n\ZZ$ and $\{i,j\}\subset\{1,\ldots,N\}$ such that $\{i(a),j(a)\}\subset\TT$ ($i=j$ is allowed), for $z\in\Zfr_{\tau}$ and $w\in S_{z,1,0}$  with $\zeta$-high probability
  \begin{equation}\label{eq:lde_1}
    (1-\EE_{\yb_{i(a)}\yb_{j(a)}})\left[\sum_{k,l=1}^{nN} \xo{x}_{ki(a)} \Gc^{(\TT,\UU)}_{kl} x_{lj(a)}  \right]
    =
    \OO{\varphi^{Q_{\zeta}}\left(\Psi+\frac{|\TT|+|\UU|}{N\eta}\right)}
    ,
  \end{equation}
and if $i(a)\notin \UU$, then
\begin{equation}\label{eq:lde_2}
    (1-\EE_{\yb_{i(a)}})\left[\sum_{k}^{nN} \xo{x}_{ki(a)} \Gc^{(\TT,\UU)}_{ki(a)}  \right]
    =
    \OO{\varphi^{Q_{\zeta}}\sqrt{\frac{\Im \LRI{\Gc}{(\TT,\UU)}{a}{ii}}{N\eta}}}
    ,\quad
    (1-\EE_{\yb_{i(a)}})\left[\sum_{k}^{nN} \Gc^{(\TT,\UU)}_{i(a)k} {x}_{i(a)k} \right]
    =
    \OO{\varphi^{Q_{\zeta}}\sqrt{\frac{\Im \LRI{\Gc}{(\TT,\UU)}{a}{ii}}{N\eta}}}
    .
\end{equation}
\end{lem}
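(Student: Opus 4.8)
The plan is to establish the large-deviation bounds \eqref{eq:lde_1} and \eqref{eq:lde_2} by writing each centered quadratic (resp. linear) form in the entries of a fixed row/column of $X$ as a sum of a ``diagonal'' part and an ``off-diagonal'' part, then applying a concentration inequality for quadratic forms of independent entries (the standard Hanson--Wright-type bounds, upgraded to $\zeta$-high probability under the subexponential decay \eqref{eq:int_2}) conditionally on the matrix $\Gc^{(\TT,\UU)}$, which is independent of the row/column $\yb_{i(a)}$ and $\yb_{j(a)}$ (this independence is exactly why we delete the rows $i(a),j(a)$ and require $\{i(a),j(a)\}\subset\TT$). First I would fix $a\in\ZZ/n\ZZ$ and note that, because of the cyclic block structure of $X$, the row $\LRI{x}{}{}{i(a)}$ and column $\LRI{\xb}{}{}{i(a)}$ each have only $N$ nonzero entries, all independent with mean zero, variance $N^{-1}$, and subexponentially decaying tails; thus $\yb_{i(a)} = \LRI{\xb}{}{}{i(a)} - z e_{i(a)}$ differs from a genuine random vector only by a deterministic shift, and the centering $(1-\EE_{\yb_{i(a)}\yb_{j(a)}})$ removes precisely the part that survives this shift.

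The key steps, in order, are as follows. \emph{Step 1.} Reduce to the centered quadratic form $\sum_{k,l}\xo{x}_{k i(a)}\Gc^{(\TT,\UU)}_{kl}x_{l j(a)}$ in the finitely many nonzero coordinates of the two rows, and separate the $i=j$ case (true quadratic form, nonzero mean to subtract) from the $i\ne j$ case (bilinear, mean zero). \emph{Step 2.} Apply the large-deviation estimate for quadratic forms: conditionally on $\Gc^{(\TT,\UU)}$ one gets, with $\zeta$-high probability,
\begin{equation*}
  \Bigl|(1-\EE_{\yb_{i(a)}\yb_{j(a)}})\sum_{k,l}\xo{x}_{ki(a)}\Gc^{(\TT,\UU)}_{kl}x_{lj(a)}\Bigr|
  \le
  \frac{\varphi^{Q_\zeta}}{N}\Bigl(\sum_{k,l\in B_a}|\Gc^{(\TT,\UU)}_{kl}|^2\Bigr)^{1/2},
\end{equation*}
where $B_a$ is the block of indices carried by the relevant row/column. \emph{Step 3.} Bound the Hilbert--Schmidt norm appearing on the right using Lemma~\ref{lem:im_of_resolvent_diag_and_product} (the identity $\sum_k|\Gc_{ki}|^2 = \Im\Gc_{ii}/\eta$, applied to $\Gc Y_z^*$-type objects), which converts $\sum|\Gc_{kl}|^2$ into $\eta^{-1}\sum_k \Im \Gc^{(\TT,\UU)}_{kk}$, hence into a multiple of $\Im \LRI{m}{(\TT,\UU)}{a}{\Gc}$. \emph{Step 4.} Replace $\Im \LRI{m}{(\TT,\UU)}{a}{\Gc}$ by $\Im m_c + \Lambda$ plus an error controlled by the minor-removal bound Lemma~\ref{lem:bord_guion_minor_estimate}, which accounts for the $\frac{|\TT|+|\UU|}{N\eta}$ term; recalling $\Psi = \sqrt{(\Im m_c + \Lambda)/(N\eta)} + (N\eta)^{-1}$ gives \eqref{eq:lde_1}. \emph{Step 5.} For \eqref{eq:lde_2}, the same scheme with a linear form: $(1-\EE_{\yb_{i(a)}})\sum_k \xo{x}_{ki(a)}\Gc^{(\TT,\UU)}_{ki(a)}$ is, conditionally on $\Gc^{(\TT,\UU)}$, a sum of independent mean-zero terms, so a Bernstein/McDiarmid argument gives $\zeta$-high-probability bound $\varphi^{Q_\zeta} N^{-1/2}(\sum_k |\Gc^{(\TT,\UU)}_{ki(a)}|^2)^{1/2}$, and Lemma~\ref{lem:im_of_resolvent_diag_and_product} again turns this into $\varphi^{Q_\zeta}\sqrt{\Im \LRI{\Gc}{(\TT,\UU)}{a}{ii}/(N\eta)}$ as stated (note here $i(a)\notin\UU$ is needed so that the index $i(a)$ survives in the column space of $\Gc^{(\TT,\UU)}$).

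The main obstacle I expect is Step 2 combined with the uniformity demands: one must obtain the quadratic-form concentration not just in probability but with $\zeta$-high probability (error $\le N^C e^{-\varphi^\zeta}$) and \emph{simultaneously} for all $\TT,\UU$, all $a$, and all $z\in\Zfr_\tau$, $w\in S_{z,1,0}$ — the last requiring a net argument in $(z,w)$ together with the Lipschitz bounds of Lemma~\ref{lem:derivative_estimates} to upgrade pointwise control to uniform control. A secondary subtlety is bookkeeping the cyclic block structure: the matrix $\Gc$ resolves $Y_z Y_z^*$, and the vectors $\yb_{i(a)}$, being shifts of sparse columns of $X$, interact only with specific blocks, so one must be careful that the relevant partial trace that emerges is exactly $\LRI{m}{(\TT,\UU)}{a}{\Gc}$ (with the correct label $a$) rather than the full trace $m_{\Gc}$; this is precisely the ``partial traces instead of the Stieltjes transform'' difference the introduction flags relative to \cite{BourYauYin}, and it is handled by the same $\zeta$-high-probability methods already used in \cite{NemiNHPO}, adjusted to the present range of $z,w$.
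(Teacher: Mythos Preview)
The paper does not actually give a proof of this lemma: immediately before stating Lemmas~\ref{lem:prop_of_mc_close_functions} and~\ref{lem:lde} it says that analogous results were proven in \cite[Lemmas~13 and~14]{NemiNHPO} for different ranges of $z,w$, and that ``the arguments can be adapted without difficulties to the new setting, therefore we state these lemmas without proof.'' So there is nothing to compare against except the referenced argument, and your outline is precisely that standard argument: large-deviation (Hanson--Wright-type) bounds for quadratic and linear forms in independent subexponential entries, applied conditionally on $\Gc^{(\TT,\UU)}$ (which is independent of $\yb_{i(a)},\yb_{j(a)}$ since $\{i(a),j(a)\}\subset\TT$), followed by the Ward identity of Lemma~\ref{lem:im_of_resolvent_diag_and_product} to convert $\sum_{k,l}|\Gc^{(\TT,\UU)}_{kl}|^2$ into $\eta^{-1}\sum_l\Im\Gc^{(\TT,\UU)}_{ll}$, and finally Lemma~\ref{lem:bord_guion_minor_estimate} to pass from $\Im\LRI{m}{(\TT,\UU)}{a-1}{\Gc}$ back to $\Im m_c+\Lambda$ at the cost of the $(|\TT|+|\UU|)/(N\eta)$ term. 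Your Steps~1--5 are correct and match what one finds in \cite{NemiNHPO} and \cite[Lemma~6.6]{BourYauYin}.

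Two small remarks. First, in Step~3 you do not need ``$\Gc Y_z^*$-type objects'': Lemma~\ref{lem:im_of_resolvent_diag_and_product} applies directly to $\Gc^{(\TT,\UU)}$, bounding $\sum_k|\Gc^{(\TT,\UU)}_{kl}|^2\le \Im\Gc^{(\TT,\UU)}_{ll}/\eta$ for each $l$, and then you sum over the relevant block of $l$'s to get $N\,\Im\LRI{m}{(\TT,\UU)}{a-1}{\Gc}/\eta$. Second, the ``main obstacle'' you anticipate is largely illusory here: the lemma is stated \emph{pointwise} in $z$, $w$, $\TT$, $\UU$ (``for any $\TT,\UU\ldots$ for $z\in\Zfr_\tau$ and $w\in S_{z,1,0}$ with $\zeta$-high probability''), so no net in $(z,w)$ and no union bound over all subsets is needed at this stage; the net argument only enters later, in the proof of Theorem~\ref{thm:continuity_argument}, where the number of $\TT,\UU$ actually used is at most polynomial in $N$.
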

The above result is valid if we take the matrix $G^{(\UU,\TT)}$ and rows $y_{i(a)}$ instead of $\Gc^{(\TT,\UU)}$ and $\yb_{i(a)}$.
From now on we fix $\alpha$ as in Lemma~\ref{lem:prop_of_mc_close_functions} and $Q_{\zeta}$ as in Lemma~\ref{lem:lde}.

To state and prove our next result we shall need some additional notation.

For $a\in\ZZ/n\ZZ$, $i\in \llbracket 1,N\rrbracket$ and $\TT\subset\llbracket 1,nN\rrbracket$ we define
\begin{equation*}
  \LRI{Z}{(\TT)}{a}{i}
  :=
  \Mes{y_{i(a)}}{y_{i(a)} G^{(\TT,\emptyset)} y_{i(a)}^*}
  ,\quad
  \LRI{\Zc}{(\TT)}{a}{i}
  :=
  \Mes{\yb_{i(a)}}{\yb_{i(a)}^* \Gc^{(\emptyset,\TT)} \yb_{i(a)}}
  ,
\end{equation*}
and we shall suppress the right superscript if $\TT=\emptyset$.

For any $t>0$ define an $N$-dependent set
\begin{equation*}
  S_{z,\delta,t}
  :=
  \{w=E+\sqrt{-1}\eta\, | \, (1+\delta)^{-1} \max\{0,\lambda_-(z)\}\leq E\leq (1+\delta)\lambda_+(z), \frac{\varphi^{t}}{N|m_c|}\leq \eta\leq1\}
  =
  S_{z,\delta,0}\cap \left\{\eta\geq \frac{\varphi^t}{N|m_c|}\right\}
  .
\end{equation*}

We are now in position to prove the main result of this section.
Although the proof of this theorem mimics the argument used in \cite[Theorem~6]{NemiNHPO}, for reader's convenience we provide here a complete proof.

\begin{thm}\label{thm:sce}
  For any $\zeta>0$ there exists $\tilde{Q}_{\zeta}>0$ such that the following implication is true for all $z\in\Zfr_{\tau}$ and $w\in S_{z,1,\tilde{Q}_{\zeta}}$:
\newline
if
  \begin{equation}\label{eq:alpha_condition}
    \Lambda(z,w)
    \leq
    \alpha |m_c(z,w)|
  \end{equation}
holds with $\zeta$-high probability, then
  \begin{EA}{c}\EAy\label{eq:sce_1}
    \LRI{G}{(\emptyset,i(a))}{a}{ii}
    =
    \left[-w(1+\LRI{m}{}{a-1}{\Gc})\right]^{-1} + \OO{\varphi^{Q_{\zeta}}\Psi}
    ,\quad
    a\in\ZZ/n\ZZ,\,1\leq i\leq N
    ,\\\EAy\label{eq:sce_2}
    \LRI{\Gc}{(i(a),\emptyset)}{a}{ii}
    =
    \left[-w(1+\LRI{m}{}{a+1}{G})\right]^{-1} + \OO{\varphi^{Q_{\zeta}}\Psi}
    ,\quad
    a\in\ZZ/n\ZZ,\,1\leq i\leq N
    ,\\\EAy\label{eq:sce_3}
    \LRI{G}{}{a}{ii}
    =
    \left[-w(1+\LRI{m}{}{a-1}{\Gc})+\frac{|z|^2}{1+\LRI{m}{}{a+1}{G}}\right]^{-1} + \OO{\varphi^{2Q_{\zeta}}\Psi}
    ,\quad
    a\in\ZZ/n\ZZ,\,1\leq i\leq N
    ,\\\EAy\label{eq:sce_4}
    \LRI{\Gc}{}{a}{ii}
    =
    \left[-w(1+\LRI{m}{}{a+1}{G})+\frac{|z|^2}{1+\LRI{m}{}{a-1}{\Gc}}\right]^{-1} + \OO{\varphi^{2Q_{\zeta}}\Psi}
    ,\quad
    a\in\ZZ/n\ZZ,\,1\leq i\leq N
    ,\\\EAy\label{eq:sce_mg}
    \frac{1}{w\LRI{m}{}{a}{G}}+(1+\LRI{m}{}{a-1}{\Gc})-\frac{|z|^2}{w(1+\LRI{m}{}{a+1}{G})} 
    =
    \OO{\varphi^{2Q_{\zeta}}\Psi}
    ,\quad 
    a\in\ZZ/n\ZZ
    ,\\\EAy\label{eq:sce_mgc}
    \frac{1}{w\LRI{m}{}{a}{\Gc}}+(1+\LRI{m}{}{a+1}{G})-\frac{|z|^2}{w(1+\LRI{m}{}{a-1}{\Gc})} 
    =
    \OO{\varphi^{2Q_{\zeta}}\Psi}
    ,\quad
    a\in\ZZ/n\ZZ
    ,
  \end{EA}
hold with $\zeta$-high probability.
\end{thm}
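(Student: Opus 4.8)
The plan is to establish \eqref{eq:sce_1}--\eqref{eq:sce_mgc} by repeated application of the Schur complement formula from Lemma~\ref{lem:sce_schur} combined with the large deviation estimates of Lemma~\ref{lem:lde}, working always on the $\zeta$-high-probability event where \eqref{eq:alpha_condition} holds. First I would prove \eqref{eq:sce_1}: by \eqref{eq:sce_schur_1} we have $(\LRI{G}{(\emptyset,i(a))}{a}{ii})^{-1} = -w(1+\yb_{i(a)}^*\Gc^{(i(a),\emptyset)}\yb_{i(a)})$. The quadratic form $\yb_{i(a)}^*\Gc^{(i(a),\emptyset)}\yb_{i(a)} = \sum_{k,l}\xo{x}_{k i(a)}\Gc^{(i(a),\emptyset)}_{kl}x_{l i(a)}$ is centered via \eqref{eq:lde_1} up to $\OO{\varphi^{Q_\zeta}(\Psi + (N\eta)^{-1})}$, and its conditional expectation $\EE_{\yb_{i(a)}}[\cdots]$ equals $\frac1N\sum_{k:\,k\in \text{block }a-1}\Gc^{(i(a),\emptyset)}_{kk}$ because only the entries of $X$ lying in the $(a-1,a)$ block multiply $\yb_{i(a)}$ (using the cyclic block structure of $X$ and $\E|\sqrt N x|^2=1$); this partial trace equals $\LRI{m}{(i(a),\emptyset)}{a-1}{\Gc}$, which by Lemma~\ref{lem:bord_guion_minor_estimate} differs from $\LRI{m}{}{a-1}{\Gc}$ by $\OO{(N\eta)^{-1}}$. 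Substituting and inverting (the denominator is bounded below by $(i)$ of Lemma~\ref{lem:prop_of_mc_close_functions}, which applies since $\Lambda\le\alpha|m_c|$) yields \eqref{eq:sce_1}; \eqref{eq:sce_2} is identical with the roles of $G,\Gc,y,\yb$ and $a\pm1$ swapped.

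Next I would derive \eqref{eq:sce_3} from \eqref{eq:sce_1}. The idea is to relate $\LRI{G}{}{a}{ii}$ to $\LRI{G}{(\emptyset,i(a))}{a}{ii}$ via the minor expansion \eqref{eq:minor_differences_2}: the difference involves $y_{i(a)}G^{(\emptyset,i(a))}y_{i(a)}^*$, which again splits into a centered part (controlled by \eqref{eq:lde_1}) plus $\frac1N\sum_{k\in\text{block }a+1}G^{(\emptyset,i(a))}_{kk} = \LRI{m}{(\emptyset,i(a))}{a+1}{G}$, itself $\LRI{m}{}{a+1}{G}+\OO{(N\eta)^{-1}}$ by Lemma~\ref{lem:bord_guion_minor_estimate}. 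Combining with \eqref{eq:sce_1} one writes $(\LRI{G}{}{a}{ii})^{-1} = -w(1+\LRI{m}{}{a-1}{\Gc}) + \frac{|z|^2}{1+\LRI{m}{}{a+1}{G}} + \OO{\varphi^{2Q_\zeta}\Psi}$, where the $|z|^2$ term comes from the off-diagonal contribution of $Y_z = X-z$ (the $-z$ on the diagonal of $Y_z$ couples the $a$-th block to itself through a second Schur step); then invert, the invertibility again guaranteed by Lemma~\ref{lem:prop_of_mc_close_functions}. Equation \eqref{eq:sce_4} is the mirror image. Finally \eqref{eq:sce_mg} and \eqref{eq:sce_mgc} follow by averaging \eqref{eq:sce_3} (resp.\ \eqref{eq:sce_4}) over $i$, i.e.\ taking $\frac1N\sum_i$, using that $\LRI{m}{}{a}{G}=\frac1N\sum_i\LRI{G}{}{a}{ii}$ and that the right-hand side of \eqref{eq:sce_3} is $i$-independent up to the error; one inverts the scalar identity $\LRI{m}{}{a}{G}=[-w(1+\LRI{m}{}{a-1}{\Gc})+|z|^2(1+\LRI{m}{}{a+1}{G})^{-1}]^{-1}+\OO{\varphi^{2Q_\zeta}\Psi}$ and rearranges to the stated form, using $|\LRI{m}{}{a}{G}|\sim|m_c|\sim|w|^{-1/2}$ to absorb the error prefactors.

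The main obstacle is the bookkeeping of error terms through the nested Schur/minor expansions while keeping the probabilistic control uniform. Concretely: (a) Lemma~\ref{lem:lde} produces errors of size $\varphi^{Q_\zeta}(\Psi + \frac{|\TT|+|\UU|}{N\eta})$, and one must check that the index sets $\TT,\UU$ appearing at every stage have bounded (in fact $O(1)$) cardinality so these do not accumulate; (b) the quantities $\LRI{m}{(\TT,\UU)}{b}{\Gc}$ that appear after removing rows/columns must be shown to still satisfy the a~priori bound $|\LRI{m}{(\TT,\UU)}{b}{\Gc}-m_c|\le 2\alpha|m_c|$ needed to invoke Lemma~\ref{lem:prop_of_mc_close_functions}, which follows from \eqref{eq:alpha_condition} together with Lemma~\ref{lem:bord_guion_minor_estimate} provided $(N\eta)^{-1}\ll \alpha|m_c|$ — and this is exactly why $w$ is restricted to $S_{z,1,\tilde Q_\zeta}$ with $\tilde Q_\zeta$ chosen large; (c) each inversion step $u\mapsto u^{-1}$ converts additive errors on $u$ into additive errors on $u^{-1}$ with a factor $|u|^{-2}\sim|w|$, so one must track powers of $|w|$ and of $\varphi^{Q_\zeta}$ carefully to land exactly on $\varphi^{2Q_\zeta}\Psi$ and not worse. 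None of these steps is deep, but getting the constants $\tilde Q_\zeta$, the threshold on $\eta$, and the interplay with $\Psi$ (which itself contains $\Lambda$, closing a mild self-referential loop resolved by \eqref{eq:alpha_condition}) to fit together cleanly is where the care is required; this is precisely the place where the argument departs from \cite{BourYauYin}, since there one works with the full Stieltjes transform $m_G$ directly whereas here the relations genuinely couple the $n$ different partial traces $\LRI{m}{}{a}{G},\LRI{m}{}{a}{\Gc}$ around the cycle $\ZZ/n\ZZ$.
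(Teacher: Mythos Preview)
Your overall strategy---Schur complement formula plus the large-deviation estimates of Lemma~\ref{lem:lde}, working under the a~priori bound \eqref{eq:alpha_condition} and Lemma~\ref{lem:prop_of_mc_close_functions}---matches the paper's, and your treatment of \eqref{eq:sce_1}, \eqref{eq:sce_2} and of the averaging step to \eqref{eq:sce_mg}--\eqref{eq:sce_mgc} is essentially the same as in the paper.

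Where you diverge is in the derivation of \eqref{eq:sce_3}. The paper does \emph{not} go through the minor expansion \eqref{eq:minor_differences_2} to pass from $\LRI{G}{(\emptyset,i(a))}{a}{ii}$ to $\LRI{G}{}{a}{ii}$. Instead it applies \eqref{eq:sce_schur_1} directly to $\LRI{G}{}{a}{ii}$: one gets $(w\LRI{G}{}{a}{ii})^{-1}=-(1+\yb_{i(a)}^*\Gc^{(i(a),\emptyset)}\yb_{i(a)})$, and since in $\Gc^{(i(a),\emptyset)}$ the $i(a)$th row and column are \emph{not} removed, the deterministic entry $-z$ at position $i(a)$ of $\yb_{i(a)}$ contributes the term $|z|^2\,\LRI{\Gc}{(i(a),\emptyset)}{a}{ii}$ to the quadratic form, plus two cross terms linear in the random entries. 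One then substitutes the already-proved \eqref{eq:sce_2} for $\LRI{\Gc}{(i(a),\emptyset)}{a}{ii}$, and the $|z|^2/(1+\LRI{m}{}{a+1}{G})$ term drops out immediately. Your minor-expansion route would reach the same destination but with considerably more algebra (you would need both the numerator and the denominator of \eqref{eq:minor_differences_2} expanded, each containing a $-z$ contribution, and then recombine to recover $(G_{ii}^a)^{-1}$).

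Two further points. First, a notational slip: in your Schur identity for $\LRI{G}{(\emptyset,i(a))}{a}{ii}$ the resolvent should be $\Gc^{(i(a),i(a))}$, not $\Gc^{(i(a),\emptyset)}$; it is precisely because both the row and the column $i(a)$ are absent that the $-z$ entry is killed and no $|z|^2$ term appears in \eqref{eq:sce_1}. Second, there is a step you do not mention which explains the jump from $\varphi^{Q_\zeta}$ to $\varphi^{2Q_\zeta}$ in \eqref{eq:sce_3}--\eqref{eq:sce_mgc}: the linear cross terms (from the $-z$ entry) are bounded via \eqref{eq:lde_2} by $\varphi^{Q_\zeta}\sqrt{\Im\LRI{\Gc}{(i(a),\emptyset)}{a}{ii}/(N\eta)}$; to close this as $\OO{\varphi^{2Q_\zeta}\Psi}$ one feeds \eqref{eq:sce_2} into $\Im\LRI{\Gc}{(i(a),\emptyset)}{a}{ii}$ and then uses \eqref{eq:prop_of_mc_close_functions_2} to bound $|\Im[w(1+\LRI{m}{}{a+1}{G})]^{-1}|\le C(\Im m_c+\Lambda)$. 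This loop---LDE, then \eqref{eq:sce_2}, then \eqref{eq:prop_of_mc_close_functions_2}---is the mechanism that your sketch alludes to only vaguely.
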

\begin{proof}
We begin with equation \eqref{eq:sce_2}.
Using \eqref{eq:sce_schur_2} and taking the expectation with respect to $\yb_{i(a)}$
\begin{EA}{ll}
    \LRI{\Gc}{(i(a),\emptyset)}{a}{ii}
    &=
    \frac{1}{-w\left(1+\LRI{m}{(i(a),i(a))}{a+1}{G}+\LRI{Z}{(i(a))}{a}{i}\right)}
    \\
    &=
    \frac{1}{-w\left(1+\LRI{m}{}{a+1}{G}+(\LRI{m}{(i(a),i(a))}{a+1}{G}-\LRI{m}{}{a+1}{G})+\LRI{Z}{(i(a))}{a}{i}\right)}
    \\
    &=
    \frac{1}{-w\left(1+\LRI{m}{}{a+1}{G}\right)}+\frac{(\LRI{m}{(i(a),i(a))}{a+1}{G}-\LRI{m}{}{a+1}{G})+\LRI{Z}{(i(a))}{a}{i}}{w(1+\LRI{m}{}{a+1}{G})(1+\LRI{m}{(i(a),i(a))}{a+1}{G}+\LRI{Z}{(i(a))}{a}{i})}
    .
\end{EA}
The $i(a)$th row and column of $\LRI{G}{(i(a),i(a))}{}{}$ are equal to zero by definition. Therefore
\begin{equation*}
  y_{i(a)} \LRI{G}{(i(a),i(a))}{}{} y_{i(a)}^*
  =
  \sum_{k,l=1}^{N} x_{i(a)k(a+1)} \LRI{G}{(i(a),i(a))}{a+1}{kl}\xo{x}_{i(a)l(a+1}
\end{equation*}
and from \eqref{eq:lde_1} we have that $\left|\LRI{Z}{(i(a))}{a}{i}\right|=\OO{\varphi^{Q_{\zeta}}\Psi}$ for some $Q_{\zeta}>0$.

Suppose that $\tilde{Q}_{\zeta}>6 Q_{\zeta}$.
Then
\begin{equation*}
  \varphi^{2Q_{\zeta}}\Psi
  \leq
  \varphi^{2Q_{\zeta}}(\sqrt{\frac{(1+\alpha)|m_c|}{N\eta}}+\frac{1}{N\eta})
  \leq
  \varphi^{2Q_{\zeta}}|m_c|(\sqrt{\frac{(1+\alpha)}{|m_c|N\eta}}+\frac{1}{|m_c|N\eta})
  \leq
  \varphi^{-Q_{\zeta}}|m_c|.
\end{equation*}
Recall that by \eqref{eq:bord_guion_minor_estimate}
\begin{equation*}
  |\LRI{m}{(i(a),i(a))}{a+1}{G}-\LRI{m}{}{a+1}{G}|
  \leq
  \frac{8}{N\eta}.
\end{equation*}
If $N$ is big enough, then
\begin{equation*}
  |\LRI{m}{(i(a),i(a))}{a+1}{G}-m_c|
  \leq
  2 \alpha |m_c|
\end{equation*}
and from \eqref{eq:prop_of_mc_close_functions_1} we get \eqref{eq:sce_2}.

We now apply \eqref{eq:sce_schur_2} to $\left[\LRI{G}{}{a}{ii}\right]^{-1}$, take expectation with respect to the column $\yb_{i(a)}$ and use \eqref{eq:sce_2}
\begin{EA}{ll}
    \frac{1}{w\LRI{G}{}{a}{ii}}
    &=
    -\left(1+\LRI{m}{(i(a),\emptyset)}{a-1}{\Gc}+\LRI{\Zc}{}{a}{i} +|z|^2\LRI{\Gc}{(i(a),\emptyset)}{a}{ii}\right)
    \\ \EAy \label{eq:sce_5}
    &=
    -(1+\LRI{m}{}{a-1}{\Gc})-\LRI{\Zc}{}{a}{i} +\frac{|z|^2}{w(1+\LRI{m}{}{a+1}{G})} +\OO{\varphi^{Q_{\zeta}}\Psi}
    .
\end{EA}
We estimate $\LRI{\Zc}{}{a}{i}$ using Lemma~\ref{lem:lde} and \eqref{eq:sce_2} as
\begin{EA}{ll}
  \LRI{\Zc}{}{a}{i}
  &=
  (1-\EE_{\yb_{i(a)}})\left[\sum_{k,l=1}^{N} \xo{x}_{k(a-1)i(a)}\LRI{\Gc}{(i(a),\emptyset)}{a-1}{kl} x_{l(a-1)i(a)} 
    -\xo{z}\sum_{l=1}^{N} \LRI{\Gc}{(i(a),\emptyset)}{}{i(a)l(a-1)} x_{l(a-1)i(a)}
    -z\sum_{k=1}^{N} \xo{x}_{k(a-1)i(a)}\LRI{\Gc}{(i(a),\emptyset)}{}{k(a-1)i(a)} 
  \right]
  \\
  &=
  \OO{\varphi^{Q_{\zeta}}\Psi}+\OO{\varphi^{Q_{\zeta}}\sqrt{\frac{\Im \LRI{\Gc}{(i(a),\emptyset)}{a}{ii}}{N\eta}}}
  \\
  &=
  \OO{\varphi^{Q_{\zeta}}\Psi}+\OO{\varphi^{Q_{\zeta}}\sqrt{\frac{1}{N\eta}\left(\Im \frac{1}{w(1+\LRI{m}{}{a+1}{G})}+\OO{\varphi^{Q_{\zeta}}\Psi}\right)}}
  .
\end{EA}
Then by \eqref{eq:prop_of_mc_close_functions_2}
\begin{equation*}
  \sqrt{\frac{1}{N\eta}\left(\Im \frac{1}{w(1+\LRI{m}{}{a+1}{G})}\right)}
  =
  \OO{\sqrt{\frac{\Im m_c + \Lambda}{N\eta}}}.
\end{equation*}
We conclude that
  \begin{equation*}
    \LRI{\Zc}{}{a}{i}
    =
    \OO{\varphi^{2Q_{\zeta}}\Psi}
  \end{equation*}
and thus 
  \begin{equation*}
    \frac{1}{w\LRI{G}{}{a}{ii}}
    =
    -(1+\LRI{m}{}{a-1}{\Gc})+\frac{|z|^2}{w(1+\LRI{m}{}{a+1}{G})}+\OO{\varphi^{2Q_{\zeta}}\Psi}.
  \end{equation*}
Now by \eqref{eq:prop_of_mc_close_functions_3}
\begin{equation*}
\left[-(1+\LRI{m}{}{a-1}{\Gc})+\frac{|z|^2}{w(1+\LRI{m}{}{a+1}{G})}+\OO{\varphi^{2Q_{\zeta}}\Psi}\right]^{-1}
=
\left[-(1+\LRI{m}{}{a-1}{\Gc})+\frac{|z|^2}{w(1+\LRI{m}{}{a+1}{G})}\right]^{-1}+w\OO{\varphi^{2Q_{\zeta}}\Psi}
\end{equation*}
and the equation \eqref{eq:sce_3} is proved.

If we sum the left- and right-hand sides of \eqref{eq:sce_3} over $i\in\{1,\ldots,N\}$ and divide by $N$, we get
\begin{equation*}
  \LRI{m}{}{a}{G}
  =
  \left[-w(1+\LRI{m}{}{a-1}{\Gc}+\frac{|z|^2}{1+\LRI{m}{}{a+1}{G}})\right]^{-1}+\OO{\varphi^{2Q_{\zeta}}\Psi}
  .
\end{equation*}
Using again \eqref{eq:prop_of_mc_close_functions_3} we have
  \begin{equation*}
    \frac{1}{w\LRI{m}{}{a}{G}}
    =
    -(1+\LRI{m}{}{a-1}{\Gc})+\frac{|z|^2}{w(1+\LRI{m}{}{a+1}{G})}+\OO{\varphi^{2Q_{\zeta}}\Psi}
    .
  \end{equation*}

Equations \eqref{eq:sce_1}, \eqref{eq:sce_4} and \eqref{eq:sce_mgc} can be proved in the same way.
Theorem~\ref{thm:sce} is established.
\end{proof}


\subsection{Weak concentration}

In this section we study the stability properties of the solutions of the system \eqref{eq:sce_mg}-\eqref{eq:sce_mgc} and obtain the initial estimate for $\Lambda$.
Although the  derivation of the self-consistent equations \eqref{eq:sce_mg}-\eqref{eq:sce_mgc} is similar to the case of one matrix considered in \cite{BourYauYin} or to the case of products of matrices but on the different sets of $z$ and $w$ (as in \cite{NemiNHPO}), the analysis of this system in our setting requires much more technical efforts.
This is due to the fact, that the matrix $\Gamma$ defined below, that corresponds to the linearization of the system \eqref{eq:sce_mg}-\eqref{eq:sce_mgc}, is singular at $\lambda_{\pm}$.
Therefore we need to study carefully the behaviour of $\|\Gamma\|_{\infty}$ around these critical point.

As in \cite{NemiNHPO} we start by linearizing the system \eqref{eq:sce_mg}-\eqref{eq:sce_mgc}.
Suppose that condition \eqref{eq:alpha_condition} holds i.e., for all $a\in\ZZ/n\ZZ$
\begin{equation*}
  |\LRI{m}{}{a}{G}-m_c|
  \leq
  \alpha |m_c|
  ,\quad
  |\LRI{m}{}{a}{\Gc}-m_c|
  \leq
  \alpha|m_c|.
\end{equation*}
After expanding the terms of the type $(\LRI{m}{}{a}{G})^{-1}$ or $(1+\LRI{m}{}{a}{G})^{-1}$  around $(m_c)^{-1}$ or $(1+m_c)^{-1}$ respectively, we obtain the following system of linear equations with respect to $\Delta_a:=(\LRI{m}{}{a}{G}-m_c)$ and $\Delta'_a:=(\LRI{m}{}{a}{\Gc}-m_c)$

\begin{EA}{c}
  \frac{1}{wm_c} -\frac{\Delta_a}{wm_c^2}+(1+m_c) +\Delta'_{a-1}-\frac{|z|^2}{w(1+m_c)}+\frac{|z|^2}{w(1+m_c)^2}\Delta_{a+1}+\OO{\frac{\Lambda^2}{|w m_c^3|}} 
    =
    \OO{\varphi^{2Q_{\zeta}}\Psi}
    ,\\
  \frac{1}{wm_c} -\frac{\Delta'_a}{wm_c^2}+(1+m_c) +\Delta_{a+1}-\frac{|z|^2}{w(1+m_c)}+\frac{|z|^2}{w(1+m_c)^2}\Delta'_{a-1}+\OO{\frac{\Lambda^2}{|w m_c^3|}} 
    =
    \OO{\varphi^{2Q_{\zeta}}\Psi}
    .
\end{EA}
Recall, that $m_c$ satisfies the self-consistent equation \eqref{eq:mc_equation}. We end up with the following linear system 
\begin{EA}{c}\EAy\label{eq:lin_sys_1}
    -\frac{\Delta_a}{wm_c^2}+\Delta'_{a-1}+\frac{|z|^2}{w(1+m_c)^2}\Delta_{a+1}
    =
    \OO{\varphi^{2Q_{\zeta}}\Psi}+\OO{\frac{\Lambda^2}{|m_c|}} 
    ,\\ \EAy \label{eq:lin_sys_2}
    -\frac{\Delta'_a}{wm_c^2}+\Delta_{a+1}+\frac{|z|^2}{w(1+m_c)^2}\Delta'_{a-1}
    =
    \OO{\varphi^{2Q_{\zeta}}\Psi}+\OO{\frac{\Lambda^2}{|m_c|}} 
    .
\end{EA}

We introduce the following notation:
\begin{equation*}
  \Delta
  :=
  (\Delta_1,\ldots,\Delta_n,\Delta'_1,\ldots,\Delta'_n)^T
  ,
\end{equation*}
and
\begin{equation*}
  \Gamma_1
  :=
  \begin{pmatrix}
      0 & 0 & \cdots& 0 & \gfr_2 & -\gfr_1
      \\
      -\gfr_1 & 0 & 0 & \cdots & 0 & \gfr_2
      \\
      \gfr_2 & -\gfr_1 & 0 & \cdots & 0 & 0
      \\
      0 & \gfr_2 & -\gfr_1 & 0 & \ddots & 0
      \\
        & \ddots &  & \ddots &  & 
      \\
      0 & \ldots & 0 & \gfr_2 & -\gfr_1 & 0  
    \end{pmatrix}
    ,\quad
  \Gamma
  :=
  \begin{pmatrix}
  I_n&\Gamma_1
  \\
  \Gamma_1^T & I_n
  \end{pmatrix}
  ,
\end{equation*}
where
\begin{equation*}
  \gfr_1
  :=
  \frac{1}{wm_c^2}
  ,\quad
  \gfr_2
  :=
  \frac{|z|^2}{w(1+m_c)^2}
  .
\end{equation*}

Thus we can rewrite the system \eqref{eq:lin_sys_1}-\eqref{eq:lin_sys_2} as
\begin{equation}\label{eq:lin_sys_3}
  \Gamma \Delta
  =
  \OO{\varphi^{2Q_{\zeta}}\Psi}+\OO{\frac{\Lambda^2}{|m_c|}} 
  .
\end{equation}

We have the following proposition about the behaviour of the inverse of $\Gamma$, which is proven in the Appendix.
\begin{pr}\label{pr:Gamma_inv}
There exist $C,\tilde{\tau},\varepsilon>0$ such that the following holds
    
\emph{Case 1:} if $|z|\geq1+\tau$, $|w-\lambda_{\pm}|\leq \tau$ and $\eta \geq \frac{\varphi^C}{N|m_c|}$, then
\begin{equation}\label{eq:Gamma_inv_1}
  \|\Gamma^{-1}\|\sim |w-\lambda_{\pm}|^{-1/2}
\end{equation}

\emph{Case 2:} if $|z|\leq 1-\tau$, $|w-\lambda_{+}|\leq \tau$ and $\eta \geq \frac{\varphi^C}{N|m_c|}$, then
\begin{equation*}
  \|\Gamma^{-1}\|\sim |w-\lambda_{+}|^{-1/2}
\end{equation*}

\emph{Case 3:} if $ \tau \leq |z|\leq 1-\tau$, $|w|\leq \tilde{\tau}$ and $\eta \geq \frac{\varphi^C}{N|m_c|}$, then
\begin{equation*}
  \|\Gamma^{-1}\|\leq C
  ;
\end{equation*}


\emph{Case 4:} if $z\in\Zfr_\tau$, $\max\{0,\lambda_-\}+\tilde{\tau} \leq E\leq \lambda_+-\tau$ and $0\leq \eta \leq \varepsilon$, then
\begin{equation*}
  \|\Gamma^{-1}\|\leq C
  ;
\end{equation*}
\end{pr}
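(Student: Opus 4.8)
The plan is to analyze $\Gamma^{-1}$ by exploiting the block-circulant structure of $\Gamma$. Since $\Gamma_1$ is (up to signs) a circulant matrix built from $\gfr_1$ and $\gfr_2$, it is diagonalized by the Fourier basis on $\ZZ/n\ZZ$: writing $\omega = e^{2\pi\sqrt{-1}k/n}$ for $k=0,\ldots,n-1$, the eigenvalues of $\Gamma_1$ are $\mu_k := -\gfr_1\omega + \gfr_2\omega^{-1}$ (with a possible overall power of $\omega$ from the cyclic shift, which does not affect moduli). Correspondingly $\Gamma = \begin{pmatrix} I_n & \Gamma_1 \\ \Gamma_1^T & I_n\end{pmatrix}$ decomposes, after conjugating by the Fourier transform in each $n\times n$ block, into $n$ blocks of size $2\times 2$ of the form $\begin{pmatrix} 1 & \mu_k \\ \bar\mu_k' & 1 \end{pmatrix}$ where $\mu_k' = -\gfr_1\omega^{-1} + \gfr_2\omega$ comes from $\Gamma_1^T$. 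Hence $\det \Gamma = \prod_{k=0}^{n-1}(1 - \mu_k\mu_k')$ and $\|\Gamma^{-1}\|$ is controlled by $\max_k |1-\mu_k\mu_k'|^{-1}$ together with a uniform bound on the numerators $|\mu_k|, |\mu_k'| \lesssim |\gfr_1| + |\gfr_2|$. So the whole problem reduces to: (a) a uniform upper bound on $|\gfr_1|+|\gfr_2|$, and (b) lower bounds on $\min_k |1 - \mu_k\mu_k'|$ in each of the four regimes, with the matching upper bound in Cases 1--2 giving the $\sim$.

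First I would record the uniform estimates on $\gfr_1 = (wm_c^2)^{-1}$ and $\gfr_2 = |z|^2 (w(1+m_c)^2)^{-1}$. On the bulk-type sets these follow from Lemma~\ref{lem:mc_properties_2}, which gives $|m_c|\sim|1+m_c|\sim|w|^{-1/2}$, hence $|\gfr_1|\sim 1$ and $|\gfr_2|\sim|z|^2$; near $\lambda_\pm$ one uses the expansions \eqref{eq:prop_mc_lambda_1}, \eqref{eq:prop_mc_lambda_2}, where $m_c$ and $1+m_c$ stay bounded away from $0$ and $\infty$ (since $3+\afr, 3-\afr \neq 0$ in the relevant ranges), so again $|\gfr_1|,|\gfr_2| = \OO{1}$; near $w=0$ one uses \eqref{eq:prop_mc_zero}, where $m_c \approx \sqrt{-1}(1-|z|^2)w^{-1/2}$ blows up, so $|\gfr_1| = |wm_c^2|^{-1} \sim 1$ while $|\gfr_2| = |z|^2|w(1+m_c)^2|^{-1}\sim |z|^2$ as well. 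The constraint $\eta \geq \varphi^C (N|m_c|)^{-1}$ is what guarantees we stay inside $S_{z,\delta,\tilde Q}$-type sets where these asymptotics are valid.

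Next I would turn to the key quantity $1 - \mu_k\mu_k'$. Here the crucial observation is that $m_c$ itself satisfies the scalar self-consistent equation \eqref{eq:mc_equation}, which is exactly the $n=1$, constant-solution specialization of the system \eqref{eq:sce_mg}--\eqref{eq:sce_mgc}; differentiating \eqref{eq:mc_equation} in $w$ shows that the "$k=0$" block of $\Gamma$ (i.e. $1 - \gfr_1\gfr_2 \cdot(\text{stuff})$, corresponding to $\omega=1$) degenerates precisely when $\partial_w m_c$ blows up, which by the square-root behavior in Lemmas~\ref{lem:mc_properties_0}--\ref{lem:mc_properties_1} happens at $w=\lambda_\pm$; there one gets $1 - \mu_0\mu_0' \sim |w-\lambda_\pm|^{1/2}$, yielding \eqref{eq:Gamma_inv_1} and Case 2. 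For $k\neq 0$ the factor $\omega$ rotates the product $\mu_k\mu_k'$ away from the value it takes at $\omega=1$, and one must check $|1-\mu_k\mu_k'|$ stays bounded below by a constant — this is where one combines the explicit values $|\gfr_1|\sim 1$, $|\gfr_2|\sim|z|^2$ with $|z|$ bounded away from $1$ (so that $\gfr_1,\gfr_2$ have genuinely different sizes, preventing accidental cancellation). In Cases 3--4, where $w$ is away from $\lambda_\pm$, even the $k=0$ block is nondegenerate and one gets the flat bound $\|\Gamma^{-1}\|\leq C$; Case 4 additionally needs the limit $\eta\downarrow 0$ to be handled, using that $\rho_z>0$ strictly inside the bulk so $\Im m_c \sim 1$ there and the relevant denominators stay away from zero uniformly.

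The main obstacle I expect is the case analysis for the $k\neq 0$ blocks near the critical points $w=\lambda_\pm$: one must show that although the $k=0$ block degenerates like $|w-\lambda_\pm|^{1/2}$, the other $n-1$ blocks do \emph{not}, so that $\|\Gamma^{-1}\|$ is driven entirely by the single scalar degeneracy and the square-root rate is sharp (both upper and lower bound). This requires tracking $\mu_k\mu_k' = (\gfr_1\omega - \gfr_2\omega^{-1})(\gfr_1\omega^{-1} - \gfr_2\omega) = \gfr_1^2 + \gfr_2^2 - \gfr_1\gfr_2(\omega^2 + \omega^{-2})$ and showing $1 - \mu_k\mu_k'$ is bounded below using $|1-|z||\geq\tau$; the algebra is delicate because $\gfr_1$ and $\gfr_2$ are complex and $w$-dependent, so one has to use the precise expansions of $m_c$ rather than just its modulus. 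The degeneracy at $w=0$ for $|z|\leq 1-\tau$ (Case 3, where one takes $|w|\leq\tilde\tau$) is comparatively easy since $|\gfr_1|\sim 1$ stays away from the values that would make any block singular, but it still must be checked uniformly as $w\to 0$.
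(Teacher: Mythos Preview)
Your overall framework --- diagonalizing $\Gamma$ via the circulant structure and reducing to the scalar eigenvalues $l_j(I-\Gamma_1\Gamma_1^T)=1-\gfr_1^2-\gfr_2^2+2\gfr_1\gfr_2\cos(2\pi j/n)$ --- is exactly what the paper does. (Your formula $\mu_k\mu_k'=\gfr_1^2+\gfr_2^2-\gfr_1\gfr_2(\omega^2+\omega^{-2})$ has the wrong power of $\omega$; the shift in $\Gamma_1$ is by one step, not two, so one gets $\omega+\omega^{-1}$. This is a slip, not a structural issue.)

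However, you have inverted the difficulty of the cases. Near $\lambda_\pm$ the $k\neq 0$ modes are \emph{easy}, not the ``main obstacle'': since $l_j-l_n=2\gfr_1\gfr_2(\cos(2\pi j/n)-1)$ and $|\gfr_1\gfr_2|\sim 1$, the other eigenvalues are automatically bounded away from $l_n\approx 0$. Your ``different sizes'' heuristic is also wrong on its face: at $w=\lambda_\pm$ one has $1-\gfr_1+\gfr_2=0$ identically from the self-consistent equation, regardless of whether $|\gfr_1|$ and $|\gfr_2|$ are comparable.

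The genuine gap is Case~4. You assert that away from $\lambda_\pm$ ``even the $k=0$ block is nondegenerate and one gets the flat bound'', invoking $\Im m_c\sim 1$. But knowing $\Im m_c\sim 1$ does not by itself prevent $l_j=1-\gfr_1^2-\gfr_2^2+2\omega_j\gfr_1\gfr_2$ from vanishing for some $\omega_j\in[-1,1]$: this is one complex equation in a real parameter, and $\gfr_1,\gfr_2$ are complex with no obvious sign or modulus obstruction. The paper's proof of Case~4 is in fact the longest and most delicate part of the appendix. It proceeds by setting $\eta=0$, writing out the real and imaginary parts of $l_j=0$ together with the real and imaginary parts of \eqref{eq:mc_equation}, and reducing to an overdetermined algebraic system in $E$, $|z|$ and $\omega$; one then has to show explicitly (via an analysis of two auxiliary rational functions $f_1^\pm(E)$ and $f_2(E)$) that this system has no solution with $\omega\in[-1,1]$ and $E$ in the bulk interval. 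The argument is genuinely algebraic and does not follow from soft bounds on $|m_c|$ or $\Im m_c$; your sketch does not supply a substitute for it.
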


\begin{rem}
  From the proof of Proposition~\ref{pr:Gamma_inv} we see that if $z$ and $w$ are close to the origin, then $\|\Gamma^{-1}\|$ behaves like $(\sqrt{|w|}+|z|^2)^{-1}$.
This singularity differs from the singularities that we obtain in the cases 1 and 2 of the above proposition, and the methods of the present article are not sufficient to study the stability of the system \eqref{eq:sce_mg}-\eqref{eq:sce_mgc} in this case.
\end{rem}

Define $\tilde{\Zfr}_{\tau}=\Zfr_\tau \cap \{\tau\leq |z|\}$.
Let $\delta>0$ be such that $\forall z\in\tilde{\Zfr}_{\tau}$ 
\begin{equation}
[(1+\delta)^{-1}\max\{\lambda_-,0\},(1+\delta)\lambda_+]\subset (\max\{\lambda_-,0\}-\tau,\lambda_+ +\tau)
.
\end{equation}

We now study the stability of the system \eqref{eq:sce_mg}-\eqref{eq:sce_mgc}.
We show that for $z\in\tilde{\Zfr}_{\tau}$ and $w\in S_{z,\delta,\tilde{Q}}$ there exists a gap in the range of $\Lambda$ that depends on the error term in \eqref{eq:sce_mg}-\eqref{eq:sce_mgc}.
Similarly to the case of one matrix, we identify three regimes of the range separation.
Note that near the points $\lambda_{\pm}$ we need an estimate for the error term that is decreasing with respect to $\eta$, therefore, later we shall replace the random control parameter $\Psi$ by a deterministic one.

\begin{pr}\label{pr:sep}
  Let $z\in\tilde{\Zfr}_{\tau}$ and $w\in S_{z,\delta,\tilde{Q}}$. Suppose that condition \eqref{eq:alpha_condition} holds
  \begin{equation}
    \Lambda
    \leq
    \alpha |m_c|
    .
  \end{equation}
Suppose that we have a system \eqref{eq:sce_mg}-\eqref{eq:sce_mgc} with the error term bounded by $\tilde{\Psi}$ that satisfies $\tilde{\Psi}|m_c|^{-1}\leq (\log N)^{-1}$.

Then there exists $M>0$ big enough such that the following holds:

\emph{Case 1:} if $|w-\lambda_{\pm}|\geq M^{-1}$ then
\begin{equation}
  \frac{\Lambda}{|m_c|}\leq \sqrt{\frac{\tilde{\Psi}}{|m_c|}}
  \quad\Rightarrow \quad
  \frac{\Lambda}{|m_c|}\leq M\frac{\tilde{\Psi}}{|m_c|}
\end{equation}

\emph{Case 2:} if $|w-\lambda_{\pm}|\leq M^{-1}$ and $|w-\lambda_{\pm}|\geq M^{3/2}\tilde{\Psi}$ then
\begin{equation}
  \Lambda\leq 2M\frac{\tilde{\Psi}}{|w-\lambda_{\pm}|^{1/2}}
  \quad \Rightarrow \quad
  \Lambda\leq M\frac{\tilde{\Psi}}{|w-\lambda_{\pm}|^{1/2}}
\end{equation}

\emph{Case 3:} if $|w-\lambda_{\pm}|\leq M^{-1}$ and $|w-\lambda_{\pm}|\leq M^{3/2}\tilde{\Psi}$ then
\begin{equation}
  \Lambda\leq 2M\sqrt{\tilde{\Psi}}
  \quad \Rightarrow \quad
  \Lambda\leq M\sqrt{\tilde{\Psi}}
\end{equation}
\end{pr}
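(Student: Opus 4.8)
The plan is to establish each of the three implications by inverting the linearized system and distinguishing, exactly as in the one‑matrix case, whether $w$ is far from the edge, moderately close to it, or so close that the operator‑norm control of $\Gamma^{-1}$ is no longer enough. Under hypothesis~\eqref{eq:alpha_condition} I would repeat the expansion leading to \eqref{eq:lin_sys_1}--\eqref{eq:lin_sys_3}, with $\varphi^{2Q_{\zeta}}\Psi$ replaced by the new error bound $\tilde{\Psi}$, but keep the quadratic terms of the expansions of $(\LRI{m}{}{a}{G})^{-1},(1+\LRI{m}{}{a}{G})^{-1},\ldots$; this gives
\begin{equation*}
  \Gamma\Delta+\mathcal{Q}(\Delta)=\OO{\tilde{\Psi}}+\OO{\Lambda^3/|m_c|},
\end{equation*}
where $\mathcal{Q}$ is a fixed $\CC$-quadratic form with $\|\mathcal{Q}(\Delta)\|=\OO{\Lambda^2/|m_c|}$, and $\Lambda=\|\Delta\|_{\ell^{\infty}}\sim\|\Delta\|$ since $\Gamma$ has fixed size $2n\times 2n$. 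Because $z\in\tilde{\Zfr}_{\tau}$ forces $|z|\geq\tau$, the near‑origin singularity of $\Gamma^{-1}$ from the remark after Proposition~\ref{pr:Gamma_inv} never occurs, so on $S_{z,\delta,\tilde{Q}}$ the only place where $\|\Gamma^{-1}\|$ is not $\OO{1}$ is near $\lambda_{\pm}$. Taking $M^{-1}$ below the thresholds $\tau,\tilde{\tau},\varepsilon$ of Proposition~\ref{pr:Gamma_inv}, a routine inspection of its four cases together with the trivial region $\eta\geq\varepsilon$ shows: if $|w-\lambda_{\pm}|\geq M^{-1}$ then $\|\Gamma^{-1}\|\leq C(1+M^{1/2})$, while if $|w-\lambda_{\pm}|\leq M^{-1}$ then $\|\Gamma^{-1}\|\sim|w-\lambda_{\pm}|^{-1/2}$ and $|m_c|\sim 1$.

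In Case~1 I would simply write $\Lambda\leq\|\Gamma^{-1}\|\,\OO{\tilde{\Psi}+\Lambda^2/|m_c|}$: the hypothesis $\Lambda/|m_c|\leq(\tilde{\Psi}/|m_c|)^{1/2}$ makes $\Lambda^2/|m_c|\leq\tilde{\Psi}$, hence $\Lambda\leq C'M^{1/2}\tilde{\Psi}\leq M\tilde{\Psi}$ once $M$ is large. In Case~2, with $u:=|w-\lambda_{\pm}|\leq M^{-1}$, one has $\Lambda\leq C''u^{-1/2}\OO{\tilde{\Psi}+\Lambda^2}$, and the hypothesis $\Lambda\leq 2M\tilde{\Psi}u^{-1/2}$ together with $u\geq M^{3/2}\tilde{\Psi}$ gives $\Lambda^2\leq 4M^{2}\tilde{\Psi}^{2}u^{-1}\leq 4M^{1/2}\tilde{\Psi}$, so $\Lambda\leq C''(1+4M^{1/2})\tilde{\Psi}u^{-1/2}\leq M\tilde{\Psi}u^{-1/2}$ for $M$ large. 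In both cases the gain is that the price of inverting $\Gamma$ is only a power $M^{\theta}$ with $\theta<1$, absorbed by the factor $M$ in the conclusion; and $\tilde{\Psi}|m_c|^{-1}\leq(\log N)^{-1}$ ensures the output still satisfies \eqref{eq:alpha_condition} for $N$ large.

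Case~3 is where the finer structure of $\Gamma$ is needed: there $u=|w-\lambda_{\pm}|$ can be as small as $\sim\varphi^{\tilde{Q}}N^{-1}$, so $u^{-1/2}$ is far too large to use directly. Instead I would use that — as the computations in the proof of Proposition~\ref{pr:Gamma_inv} show — near $\lambda_{\pm}$ the matrix $\Gamma$ has exactly one small singular value $\sigma\sim\sqrt{u}$, all others being $\gtrsim 1$, with right and left singular vectors $v$ and $\ell$, where $v$ is to leading order the ``diagonal'' direction $\Delta_1=\dots=\Delta_n,\ \Delta'_1=\dots=\Delta'_n$ carrying the square‑root edge of $\rho_z$. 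Writing $\Delta=\beta v+\rho$ with $\rho\perp v$: projecting $\Gamma\Delta+\mathcal{Q}(\Delta)=\OO{\tilde{\Psi}}+\OO{\Lambda^3}$ onto the span of the other $2n-1$ right singular vectors (on which $\Gamma$ is boundedly invertible) gives $\|\rho\|\leq C(\tilde{\Psi}+\Lambda^2)$, and projecting it onto $\ell$ and substituting $\Delta=\beta v+\rho$ collapses it to the scalar equation
\begin{equation*}
  c_2\,\beta^2+\sigma\,\beta=\OO{\tilde{\Psi}}+\OO{\Lambda\|\rho\|+\|\rho\|^2+\Lambda^3},
\end{equation*}
in which the crucial point is the non‑degeneracy $|c_2|\sim 1$, i.e.\ that the second‑order term of the self‑consistent system along the critical direction does not vanish — this follows from the square‑root expansions of $m_c$ near $\lambda_{\pm}$ in Lemmas~\ref{lem:mc_properties_0}(2) and~\ref{lem:mc_properties_1}(3). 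Using the bootstrap hypothesis $\Lambda\leq 2M\sqrt{\tilde{\Psi}}$, the restriction $u\leq M^{3/2}\tilde{\Psi}$ and $\tilde{\Psi}\leq(\log N)^{-1}$, every term on the right is $\leq C_1\tilde{\Psi}$ for $N$ large while $\sigma\leq CM^{3/4}\sqrt{\tilde{\Psi}}$, so the quadratic formula yields $|\beta|\leq|c_2|^{-1}\sigma+(C_1\tilde{\Psi}/|c_2|)^{1/2}\leq C_2(M^{3/4}+1)\sqrt{\tilde{\Psi}}$; together with $\|\rho\|\lesssim M^{2}\tilde{\Psi}\leq\sqrt{\tilde{\Psi}}$ for $N$ large this gives $\Lambda\leq|\beta|+\|\rho\|\leq M\sqrt{\tilde{\Psi}}$ as soon as $M$ is chosen large enough that $C_2(M^{3/4}+1)+1\leq M$.

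The hard part is Case~3, and within it the non‑degeneracy $|c_2|\sim 1$: identifying the one‑dimensional unstable subspace of $\Gamma$ and checking that the self‑consistent system restricted to it is a genuine non‑degenerate scalar quadratic equation is exactly what encodes the square‑root vanishing of $\rho_z$ at $\lambda_{\pm}$ and what makes the $\sqrt{\tilde{\Psi}}$‑bound possible. If the edge behaviour were different — for instance near the origin, where $\|\Gamma^{-1}\|$ scales like $(\sqrt{|w|}+|z|^2)^{-1}$ — the coefficient $c_2$ would degenerate and this argument would break down, consistent with the remark after Proposition~\ref{pr:Gamma_inv}. The only remaining task is the purely clerical verification that the four cases of Proposition~\ref{pr:Gamma_inv}, together with the trivial region $\eta\geq\varepsilon$, cover the whole parameter range $z\in\tilde{\Zfr}_{\tau}$, $w\in S_{z,\delta,\tilde{Q}}$.
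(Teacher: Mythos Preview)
Your proof is correct and follows the same three–case strategy as the paper; Cases~1 and~2 are essentially identical to the paper's treatment. In Case~3 you package the argument in SVD language (one small singular value $\sigma\sim\sqrt{u}$, split $\Delta=\beta v+\rho$, project onto the stable and unstable subspaces), whereas the paper works more concretely: it freezes $\Gamma$ at $\lambda_{\pm}$, uses that $\ker\Gamma(\lambda_{\pm})$ is exactly the line spanned by $(1,\ldots,1)^{T}$, bounds the differences $\Delta_a-\Delta_1$, $\Delta'_a-\Delta_1$ by inverting the remaining $(2n-1)\times(2n-1)$ block, and then \emph{sums} all $2n$ second–order equations to obtain the scalar quadratic for $\Delta_1^2$; your non–degeneracy $|c_2|\sim 1$ appears there explicitly as the non–vanishing of $\lambda_{\pm}^{-1}m_c(\lambda_{\pm})^{-3}-|z|^2\lambda_{\pm}^{-1}(1+m_c(\lambda_{\pm}))^{-3}$, verified via \eqref{eq:app_3}--\eqref{eq:app_4}. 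The two versions are the same argument in different coordinates: the paper's has the advantage that the non–degeneracy check becomes a concrete algebraic identity, while yours makes the analogy with the one–matrix square–root edge mechanism more transparent.
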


\begin{proof}
  \begin{bf}
    Case 1.
  \end{bf}
Suppose that $|w-\lambda_{\pm}|\geq M^{-1}$.
Then we are in one of the two last cases of Proposition~\ref{pr:Gamma_inv}.
The condition $\tau\leq |z|$ assures that in these cases the norm of the matrix $\Gamma^{-1}$ is bounded.
If we linearise the system \eqref{eq:sce_mg}-\eqref{eq:sce_mgc} with an $\OO{\tilde{\Psi}}$ error term up to the first order and divide the equations by $|m_c|$, we obtain the following system
\begin{equation}
  \Gamma \left(\frac{\Delta}{|m_c|}\right)
  =
  \OO{\frac{\tilde{\Psi}}{|m_c|}}+\OO{\frac{\Lambda^2}{|m_c|^2}}
  .
\end{equation}
Since $\|\Gamma^{-1}\|_{\infty}$ is bounded, we deduce that for any $1\leq a \leq n$
\begin{equation}
  \frac{\Delta_a}{|m_c|}=\OO{\frac{\tilde{\Psi}}{|m_c|}}+\OO{\frac{\Lambda^2}{|m_c|^2}}
  ,\quad
  \frac{\Delta'_a}{|m_c|}=\OO{\frac{\tilde{\Psi}}{|m_c|}}+\OO{\frac{\Lambda^2}{|m_c|^2}}
  .
\end{equation}
If $\Lambda |m_c|^{-1}\leq (\tilde{\Psi}|m_c|^{-1})^{1/2}$ then for all $1\leq a\leq n$
\begin{equation}
  \frac{\Delta_a}{|m_c|}=\OO{\frac{\tilde{\Psi}}{|m_c|}}
  ,\quad
  \frac{\Delta'_a}{|m_c|}=\OO{\frac{\tilde{\Psi}}{|m_c|}}
  ,  
\end{equation}
which implies that $\Lambda|m_c|^{-1}=\OO{\tilde{\Psi}|m_c|^{-1}}$.

\begin{bf}
  Case 2.
\end{bf}
Suppose now that $|w-\lambda_{\pm}|\leq M^{-1}$. It follows from Proposition~\ref{pr:Gamma_inv} that $\|\Gamma^{-1}\|_{\infty}=\OO{|w-\lambda_{\pm}|^{-1/2}}$. If we linearise the system \eqref{eq:sce_mg}-\eqref{eq:sce_mgc} up to the first order  we get that
\begin{equation}
  \Delta_a
  =
  \OO{\frac{\tilde{\Psi}}{\sqrt{|w-\lambda_{\pm}|}}}+\OO{\frac{\Lambda^2}{\sqrt{|w-\lambda_{\pm}|}|m_c|}}
  ,\quad
  \Delta'_a=\OO{\frac{\tilde{\Psi}}{\sqrt{|w-\lambda_{\pm}|}}}+\OO{\frac{\Lambda^2}{\sqrt{|w-\lambda_{\pm}|}|m_c|}}
  .
\end{equation}
If $\Lambda\leq 2M\tilde{\Psi}|w-\lambda_{\pm}|^{-1/2}$, then we can rewrite the above equations as
\begin{equation}
  \Delta_a=\OO{\frac{\tilde{\Psi}}{\sqrt{|w-\lambda_{\pm}|}} +M^2\frac{\tilde{\Psi}^2}{|w-\lambda_{\pm}|^{3/2}|m_c|}}
  ,\quad
  \Delta'_a=\OO{\frac{\tilde{\Psi}}{\sqrt{|w-\lambda_{\pm}|}}+M^2\frac{\tilde{\Psi}^2}{|w-\lambda_{\pm}|^{3/2}|m_c|}}
  .
\end{equation}
We now need to show that for $M$ large enough $M^2 \tilde{\Psi}|w-\lambda_{\pm}|^{-1}\leq M$.
But this follows from the condition
\begin{equation}
  |w-\lambda_{\pm}|\geq M^{3/2}\tilde{\Psi}
  .
\end{equation}
Thus Case~2 is established.

\begin{bf}
  Case 3.
\end{bf}
From \eqref{eq:app_2} in the Appendix we know that if $w=\lambda_{\pm}$, then $1-\gfr_1+\gfr_2=0$.
If we rewrite the system \eqref{eq:lin_sys_1}-\eqref{eq:lin_sys_2} using \eqref{eq:prop_mc_lambda_1} and \eqref{eq:prop_mc_lambda_2} we get for $a\in \ZZ/n\ZZ$
\begin{EA}{l}
 \Delta'_{a-1}-\frac{1}{\lambda_{\pm}m_c^2(\lambda_{\pm})}\Delta_a+\frac{|z|^2}{\lambda_{\pm}(1+m_c(\lambda_{\pm}))^2}\Delta_{a+1}
   =
   \OO{\tilde{\Psi}+\Lambda^2+\Lambda|w-\lambda_{\pm}|^{1/2}}
   ,\\
 \Delta_{a+1}-\frac{1}{\lambda_{\pm}m_c^2(\lambda_{\pm})}\Delta'_a+\frac{|z|^2}{\lambda_{\pm}(1+m_c(\lambda_{\pm}))^2}\Delta'_{a-1}
   =
   \OO{\tilde{\Psi}+\Lambda^2+\Lambda|w-\lambda_{\pm}|^{1/2}}   
   .
\end{EA}
Consider now the matrix $\Gamma(\lambda_{\pm})$.
We will show that $\mathrm{rank} \Gamma(\lambda_{\pm})=2n-1$.
From \eqref{eq:app_2} it follows that $l_n(I-\Gamma_1\Gamma_1^{T})$ (the $n$th eigenvalue of $(I-\Gamma_1\Gamma_1^{T})(\lambda_{\pm})$ defined in \eqref{eq:app_1}) is equal to zero.
From the formula \eqref{eq:app_1} we have that for $1\leq j\leq n-1$
\begin{equation}
  l_n(I-\Gamma_1\Gamma_1^T)-l_j(I-\Gamma_1\Gamma_1^T)
  =
  \gfr_1\gfr_2 2(1-\Re e^{j \sqrt{-1}2\pi/n})
  .
\end{equation}
Since $\gfr_1\sim \gfr_2 \sim 1$ we deduce that $(I-\Gamma_1\Gamma_1^T)(\lambda_{\pm})$ has only one vanishing eigenvalue.
Using the formula for the determinant of block matrices we have that
\begin{equation}
  \det (\Gamma-xI)
  =
  \det (I-\Gamma_1\Gamma_1^T -(2x-x^2)I)
  .
\end{equation}
Therefore, if $l_j$ is an eigenvalue of $I-\Gamma_1\Gamma_1^T$, then $1-\sqrt{1-l_j}$ and $1+\sqrt{1-l_j}$ are eigenvalues of $\Gamma$. 
We conclude that $\Gamma(\lambda_{\pm})$ has $2n-1$ non-zero eigenvalues and that $\mathrm{rank}\Gamma(\lambda_{\pm})= 2n-1$.
The sum of each row or column of $\Gamma(\lambda_{\pm})$ is equal to zero, which implies that $\mathrm{Ker} \Gamma =\{t (1,1,\ldots,1)^T,\, t\in\RR\}$. 
Suppose for simplicity that the lower right $n-1$-minor of $\Gamma(\lambda_{\pm})$ is invertible and denote this minor by $\tilde{\Gamma}$.
Then
\begin{equation}
  \Gamma \Delta -\Gamma  (\Delta_1,\Delta_1,\ldots,\Delta_1)^T
  =
  \OO{\tilde{\Psi}+\Lambda^2+\Lambda|w-\lambda_{\pm}|^{1/2}}   
\end{equation}
gives a system of $n$ linear equations with respect to $\tilde{\Delta}:=(\Delta_2-\Delta_1,\ldots,\Delta_n-\Delta_1,\Delta'_1-\Delta_1,\ldots,\Delta'_n-\Delta_1)$.
The system
\begin{equation}
  \tilde{\Gamma}\tilde{\Delta}
  =
  \OO{\tilde{\Psi}+\Lambda^2+\Lambda|w-\lambda_{\pm}|^{1/2}}   
\end{equation}
can be solved, which implies that
\begin{equation}
  \max_{1\leq a\leq n} |\Delta_a-\Delta_1|=\OO{\tilde{\Psi}+\Lambda^2+\Lambda|w-\lambda_{\pm}|^{1/2}}   
  ,\quad
  \max_{1\leq a\leq n} |\Delta'_a-\Delta_1|=\OO{\tilde{\Psi}+\Lambda^2+\Lambda|w-\lambda_{\pm}|^{1/2}}   
\end{equation}
and also
\begin{equation}\label{eq:sep_1}
  \max_{1\leq a\leq n} |\Delta_a^2-\Delta_1^2|=\OO{\tilde{\Psi}\Lambda+\Lambda^3+\Lambda^2|w-\lambda_{\pm}|^{1/2}}   
  ,\quad
  \max_{1\leq a\leq n} |(\Delta'_a)^2-\Delta_1^2|=\OO{\tilde{\Psi}\Lambda+\Lambda^3+\Lambda^2|w-\lambda_{\pm}|^{1/2}}   
\end{equation}

We now linearise the system \eqref{eq:sce_mg}-\eqref{eq:sce_mgc} with an error term bounded by $\tilde{\Psi}$ up to the second order and expand the function $m_c$ around $\lambda_{\pm}$ according to \eqref{eq:prop_mc_lambda_1} and \eqref{eq:prop_mc_lambda_2}.
We end up with the following system for $a\in\ZZ/n\ZZ$
\begin{EA}{l}
     \Delta'_{a-1}-\frac{1}{\lambda_{\pm}m_c^2}\Delta_a+\frac{|z|^2}{\lambda_{\pm}(1+m_c)^2}\Delta_{a+1}+\frac{1}{\lambda_{\pm}m_c^3}\Delta_a^2 -\frac{|z|^2}{\lambda_{\pm}(1+m_c)^3}\Delta_{a+1}^2
   =
   \OO{\tilde{\Psi}+\Lambda^3+\Lambda\sqrt{|w-\lambda_{\pm}|}}
   \\
   \Delta_{a+1}-\frac{1}{\lambda_{\pm}m_c^2}\Delta'_a+\frac{|z|^2}{\lambda_{\pm}(1+m_c)^2}\Delta'_{a-1}+\frac{1}{\lambda_{\pm}m_c^3}(\Delta'_a)^2 -\frac{|z|^2}{\lambda_{\pm}(1+m_c)^3}(\Delta'_{a-1})^2
   =
   \OO{\tilde{\Psi}+\Lambda^3+\Lambda\sqrt{|w-\lambda_{\pm}|}}
   .
\end{EA}
Adding all these equations we get an equation for the sum of squares of $\Delta_a$ and $\Delta'_a$
\begin{EA}{rl}
   (1-\frac{1}{\lambda_{\pm}m_c^2}+\frac{|z|^2}{\lambda_{\pm}(1+m_c)^2})(\sum_a\Delta'_{a}+\Delta_a)+(\frac{1}{\lambda_{\pm}m_c^3}-\frac{|z|^2}{\lambda_{\pm}(1+m_c)^3})&(\sum_a \Delta_a^2+(\Delta'_{a})^2)
   \\
   =
   (\frac{1}{\lambda_{\pm}m_c^3}-\frac{|z|^2}{\lambda_{\pm}(1+m_c)^3})(\sum_a \Delta_a^2+(\Delta'_{a})^2)
   &=
   \OO{\tilde{\Psi}+\Lambda^3+\Lambda\sqrt{|w-\lambda_{\pm}|}}
\end{EA}
where we used \eqref{eq:app_2}.
Suppose that
\begin{equation}\label{eq:sep_2}
\frac{1}{\lambda_{\pm}m_c^3}-\frac{|z|^2}{\lambda_{\pm}(1+m_c)^3}
\end{equation}
 does not vanish.
Then, using \eqref{eq:sep_1} we have that
\begin{equation}
  \Delta_1^2
  =
  \OO{\tilde{\Psi}+\Lambda^3+\Lambda\sqrt{|w-\lambda_{\pm}|} +\tilde{\Psi}\Lambda+\Lambda^3+\Lambda^2|w-\lambda_{\pm}|^{1/2}} 
  .
\end{equation}
If $\Lambda\leq 2M\sqrt{\tilde{\Psi}}$ and $|w-\lambda_{\pm}|\leq M^{3/2}\tilde{\Psi}$, then
\begin{equation}
  \Delta_1^2
  =
  \OO{\tilde{\Psi}+ 2 M^{7/4}\tilde{\Psi} + \tilde{\Psi}^{3/2}}
\end{equation}
and from \eqref{eq:sep_1} we have
\begin{equation}
  \max_a\{|\Delta_a|^2,|\Delta'_a|^2\}
  =
  \OO{\tilde{\Psi}+ 2 M^{7/4}\tilde{\Psi} + \tilde{\Psi}^{3/2}}
  .
\end{equation}
We conclude that $\Lambda \leq C M^{7/8} \sqrt{\tilde{\Psi}}\leq M\sqrt{\tilde{\Psi}}$ for $M$ big enough.
The last thing to show is that \eqref{eq:sep_2} is non-zero for any $z\in \tilde{\Zfr}_{\tau}$.
This follows from \eqref{eq:app_3} and \eqref{eq:app_4} in the Appendix.
The proposition is thus proven.
\end{proof}

In the following proposition we estimate $\Lambda$ in the case when $\eta$ is of order $\OO{1}$.
The beginning of the proof is similar to the proof of \cite[Lemma~17]{NemiNHPO}, but for the reader's convenience we provide this proof here with all the details.
\begin{pr}\label{pr:large}
Let $\eta_0>0$. 
Then for any $z\in\tilde{\Zfr}_{\tau}$ and for any $w\in S_{z,\delta,\tilde{Q}_{\zeta}}\cap\{\eta=\eta_0\}$
\begin{equation}
  \sup_{w\in S_{z,\delta,\tilde{Q}_{\zeta}}\cap\{\eta=\eta_0\}}\Lambda
  \leq
  \frac{\varphi^{Q_{\zeta}}}{\sqrt{N}}
\end{equation}
with $\zeta$-high probability.
\end{pr}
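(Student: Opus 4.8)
The plan is a continuity argument in the imaginary part $\eta$, run from a large constant $\eta_{\max}$ — where $\Lambda$ is controlled by hand — down to the prescribed value $\eta_0$. The engine is a \emph{self-improvement step}: at any $w=E+\sqrt{-1}\eta$ with $\eta_0\le\eta\le\eta_{\max}$ at which $\Lambda\le\alpha|m_c|$ is known, Theorem~\ref{thm:sce} provides the system \eqref{eq:sce_mg}--\eqref{eq:sce_mgc} with error $\OO{\varphi^{2Q_\zeta}\Psi}$, and linearising it around $m_c$ as in \eqref{eq:lin_sys_3} gives $\Gamma\Delta=\OO{\varphi^{2Q_\zeta}\Psi}+\OO{\Lambda^2/|m_c|}$. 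Here $|w-\lambda_\pm(z)|\ge\eta\ge\eta_0$ keeps $(z,w)$ at a fixed distance from the singular locus of $\Gamma$, so $\|\Gamma^{-1}\|$ is bounded uniformly in $N$ (Cases~3--4 of Proposition~\ref{pr:Gamma_inv}, together with an easy compactness argument and the non-degeneracy of $\Gamma$ established in its proof, covering the intermediate range of $w$; the derivation of Theorem~\ref{thm:sce} and the linearisation extend without change to $\eta\le\eta_{\max}$, the sole use of $\eta\le1$ in Section~5 being to keep $|w|\le\tau^{-1}$ in Lemmas~\ref{lem:mc_properties_0}--\ref{lem:mc_properties_2}, whereas for large $|w|$ one has $m_c=-w^{-1}+\OO{|w|^{-2}}$ from \eqref{eq:mc_equation}). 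Inverting $\Gamma$ and using that for $\eta$ of order one $|m_c|\sim_c1$, $\Im m_c=\OO{1}$, hence $\Psi\le C(\sqrt{\Lambda/N}+N^{-1/2})$, one obtains $\Lambda\le C\|\Gamma^{-1}\|(\varphi^{2Q_\zeta}\Psi+\Lambda^2/|m_c|)$; the resulting quadratic dichotomy for $\Lambda$ — the analogue, in the bounded-$\|\Gamma^{-1}\|$ regime, of the separation in Proposition~\ref{pr:sep} — shows that \emph{any} a priori bound placing $\Lambda$ strictly below the larger root, e.g. $\Lambda\le\alpha'|m_c|$ for $\alpha'$ a sufficiently small constant (in the induction this will be supplied by $\Lambda\le2\tilde\Psi\to0$), forces $\Lambda$ below the smaller root, $\Lambda\le\tilde\Psi:=C_1\varphi^{2Q_\zeta}N^{-1/2}$ with $C_1=C_1(\eta_0,\tau,n)$.

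\textbf{Base point.} By \eqref{eq:int_2} there is a deterministic $C_X=C_X(\tau,\theta)$ with $\|X\|\le C_X$ with $\zeta$-high probability, hence $\|Y_z^*Y_z\|\le(C_X+\tau^{-1})^2$ on that event. Choose $\eta_{\max}$ a large enough constant that, for $w=E+\sqrt{-1}\eta_{\max}$, the Neumann expansion $G=-\sum_{k\ge0}w^{-k-1}(Y_z^*Y_z)^k$ converges and yields, deterministically on $\{\|X\|\le C_X\}$, $\LRI{m}{}{a}{G}=-w^{-1}+\OO{|w|^{-2}}$ and likewise $\LRI{m}{}{a}{\Gc}=-w^{-1}+\OO{|w|^{-2}}$ (the implied constants depending only on $\tau,\theta$); comparing with $m_c=-w^{-1}+\OO{|w|^{-2}}$ gives $\Lambda(z,E+\sqrt{-1}\eta_{\max})\le\alpha'|m_c|$ for $\eta_{\max}$ large, with $\zeta$-high probability. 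The self-improvement then gives $\Lambda(z,E+\sqrt{-1}\eta_{\max})\le\tilde\Psi$ with $\zeta$-high probability.

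\textbf{Propagation and conclusion.} Fix $z\in\tilde{\Zfr}_{\tau}$, take an $N^{-10}$-net $\mathcal N$ of the admissible pairs $(E,\eta)$ with $\eta\in[\eta_0,\eta_{\max}]$ ($E$ in the range of $S_{z,\delta,\tilde{Q}_{\zeta}}$), and let $\Omega$ be the intersection over $\mathcal N$ of the $\zeta$-high-probability events carrying the bounds of Lemma~\ref{lem:lde} and the conclusions of Theorem~\ref{thm:sce}, intersected with $\{\|X\|\le C_X\}$. Since $|\mathcal N|=\mathrm{poly}(N)$ we have $\Pr{\Omega^{\complement}}\le N^{C}e^{-\varphi^\zeta}$, and on $\Omega$ everything below is deterministic. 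With $\eta^{(j)}:=\eta_{\max}-jN^{-10}$ we induct on $j$: the base case $j=0$ is the previous paragraph, and for the step Lemma~\ref{lem:derivative_estimates} gives that $\LRI{m}{}{a}{G}$, $\LRI{m}{}{a}{\Gc}$ and $m_c$ are $\OO{\eta_0^{-2}}$-Lipschitz in $\eta$, so $\Lambda(z,E+\sqrt{-1}\eta^{(j+1)})\le\tilde\Psi+\OO{\eta_0^{-2}N^{-10}}\le2\tilde\Psi$; since $2\tilde\Psi\to0$, this lies strictly below the larger root of the quadratic above, and the self-improvement re-establishes $\Lambda(z,E+\sqrt{-1}\eta^{(j+1)})\le\tilde\Psi$. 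After $\lceil(\eta_{\max}-\eta_0)N^{10}\rceil$ steps we reach $\eta=\eta_0$, and one more application of Lemma~\ref{lem:derivative_estimates} passes from the $N^{-10}$-net in $E$ to all of $S_{z,\delta,\tilde{Q}_{\zeta}}\cap\{\eta=\eta_0\}$. Hence $\sup_{w\in S_{z,\delta,\tilde{Q}_{\zeta}}\cap\{\eta=\eta_0\}}\Lambda\le\tilde\Psi=\OO{\varphi^{2Q_\zeta}N^{-1/2}}$ with $\zeta$-high probability, which is the claim after renaming the exponent. (Alternatively, McDiarmid's inequality — the diagonal resolvent blocks have $\OO{1/(N\eta)}$ bounded differences under resampling a row of $X$ — combined with a direct estimate $\E\LRI{m}{}{a}{G}=m_c+\OO{1/N}$ valid for $\eta\ge\eta_0$ away from the spectral edges, yields the proposition without the continuity.)

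\textbf{Main difficulty.} The delicate point is the circularity of the scheme: Theorem~\ref{thm:sce} can be invoked only where $\Lambda\le\alpha|m_c|$ is already known, which \emph{fails} at $\eta=\eta_0$ a priori — there only the trivial $\|G\|\le\eta_0^{-1}$, hence $\Lambda=\OO{\eta_0^{-1}}$, is available, and this need not be smaller than $\alpha|m_c|\sim\alpha$. Entering the region $\{\Lambda\le\alpha'|m_c|\}$ at the large value $\eta_{\max}$, where it holds unconditionally, is what starts the bootstrap, and the quantitative gap $\varphi^{2Q_\zeta}N^{-1/2}\ll\alpha'|m_c|$ between the conclusion and that region is what keeps each $N^{-10}$-Lipschitz step strictly inside it, so that the self-improvement iterates down to $\eta_0$. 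The singularity of $\Gamma$ at $\lambda_\pm$ — the genuine obstacle at small $\eta$, and the reason Propositions~\ref{pr:Gamma_inv} and~\ref{pr:sep} must be stated with such care — is invisible here, since $\eta\ge\eta_0$ keeps $w$ at distance $\ge\eta_0$ from $\lambda_\pm$.
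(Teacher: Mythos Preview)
Your route is genuinely different from the paper's. The paper does \emph{not} run a continuity-in-$\eta$ bootstrap to reach $\eta_0$; it works directly at the fixed level $\eta=\eta_0$. First, the bounded-differences estimate \eqref{eq:bord_guion_minor_estimate} feeds McDiarmid's inequality to concentrate each partial trace $\LRI{m}{}{a}{G}$, $\LRI{m}{}{a}{\Gc}$ around its mean with error $\OO{\varphi^{\zeta/2}N^{-1/2}}$. Then --- and this is the step your proposal lacks --- the paper compares $X$ with a block matrix $\hat X$ having \emph{iid} entries (citing \cite[Lemma~3]{NemiNHPO} and the symmetry lemma \cite[Lemma~14]{OrouSosh}) to show that all the expected partial traces coincide, hence $|\LRI{m}{}{a}{G}-m_G|=\OO{\varphi^{Q_\zeta}N^{-1/2}}$ for every $a$. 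With the partial traces equalised, the Schur expansion collapses to the \emph{scalar} approximate fixed-point equation $G_{ii}^{-1}=-w(1+m_G)+|z|^2(1+m_G)^{-1}+\OO{\varphi^{2Q_\zeta}N^{-1/2}}$, and the paper defers to \cite[Lemma~6.12]{BourYauYin} for its stability. The point is that the matrix $\Gamma$ never enters; the iid-comparison kills the vector structure first.

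Your argument hinges on $\|\Gamma^{-1}\|$ being uniformly bounded on the whole slab $\eta\in[\eta_0,\eta_{\max}]$, and this is where the gap lies. Proposition~\ref{pr:Gamma_inv} does \emph{not} establish that the singular locus of $\Gamma$ is exactly $\{\lambda_\pm\}$: Case~4 is proved only for $0\le\eta\le\varepsilon$, and the Appendix argument there takes the imaginary part of \eqref{eq:app_6} \emph{after} setting $\eta=0$ and dividing by $\Im m_c$; it does not extend to $\eta$ of order one. Your ``easy compactness argument'' presupposes $\det(I-\Gamma_1\Gamma_1^T)\neq 0$ throughout the slab, i.e.\ that no $l_j=1-\gfr_1^2-\gfr_2^2+2\omega_j\gfr_1\gfr_2$ vanishes for any $\eta\in(\varepsilon,\eta_{\max}]$. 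This is a codimension-two condition and is plausible, but it is a separate analytic fact not contained in the paper, and it is doing essentially all the work in your scheme. (Note also that as $|w|\to\infty$ one has $\gfr_1\sim w$, so the entries of $\Gamma$ blow up; the Lemma~\ref{lem:prop_of_mc_close_functions} estimates you implicitly use in ``the derivation of Theorem~\ref{thm:sce} \ldots\ extends without change'' are stated only for $w\in S_{z,1,0}$, i.e.\ $\eta\le 1$.)

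Your parenthetical alternative --- McDiarmid plus ``a direct estimate $\E\LRI{m}{}{a}{G}=m_c+\OO{1/N}$'' --- is in spirit the paper's approach, but the direct estimate you invoke is exactly the nontrivial content: the paper does not compute $\E\LRI{m}{}{a}{G}$ directly, it shows the $n$ expectations agree via the iid comparison and then solves one scalar equation.
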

\begin{proof}
First of all recall that by \eqref{eq:bord_guion_minor_estimate}
  \begin{equation*}
    |\LRI{m}{}{a}{G}-\LRI{m}{(i,\emptyset)}{a}{G}|
    \leq
    \frac{4}{N\eta}
    ,\quad
    |\LRI{m}{}{a}{\Gc}-\LRI{m}{(i,\emptyset)}{a}{\Gc}|
    \leq
    \frac{4}{N\eta}.
  \end{equation*}
Therefore, $\LRI{m}{}{a}{G}$ and $\LRI{m}{}{a}{\Gc}$ as functions of the columns $\xb_{k},1\leq k\leq nN$ satisfy the condition \eqref{eq:mcdiarmid_condition} for any $w\in S_{z,\delta,\tilde{Q}_{\zeta}}\cap\{\eta=\eta_0\}$ and we can apply the McDiarmid's concentration inequality, so that
  \begin{equation*}
    \Pr{|\LRI{m}{}{a}{G}-\Me{\LRI{m}{}{a}{G}}|\geq t}
    \leq
    C e^{-c t^2 N}
  \end{equation*}
and similarly for $\LRI{m}{}{a}{\Gc}$.
If we take $t=c^{-1/2}\varphi^{\zeta/2}N^{-1/2}$ in the above inequality we get that for any $w\in S_{z,\delta,\tilde{Q}_{\zeta}}\cap\{\eta=\eta_0\}$
  \begin{equation*}
    |\LRI{m}{}{a}{G}-\Me{\LRI{m}{}{a}{G}}|
    =
    \OO{\frac{\varphi^{\zeta/2}}{\sqrt{N}}}
    ,\quad
    |\LRI{m}{}{a}{\Gc}-\Me{\LRI{m}{}{a}{\Gc}}|
    =
    \OO{\frac{\varphi^{\zeta/2}}{\sqrt{N}}}
  \end{equation*}
with $\zeta$-high probability.

Let $\hat{X}$ be a $nN\times nN$ random matrix having the same block structure as the matrix $X$ but with \emph{iid} non-zero entries.
Suppose that $\LRI{X}{}{a,a+1}{kl}$ has the same distribution as $\LRI{X}{}{12}{11}$ for all $a\in \ZZ/n\ZZ$ and $1\leq k,l\leq N$.
Denote by $\hat{G}$ and $\hat{\Gc}$ corresponding resolvent matrices.
In was shown in \cite[Lemma~3]{NemiNHPO} that 
\begin{equation*}
  |\Me{\LRI{m}{}{a}{\hat{G}}}-\Me{\LRI{m}{}{a}{G}}|
  =
  \OO{\frac{\varphi^{\zeta}}{\sqrt{N}}}
  ,\quad
  |\Me{\LRI{m}{}{a}{\hat{\Gc}}}-\Me{\LRI{m}{}{a}{\Gc}}|
  =
  \OO{\frac{\varphi^{\zeta}}{\sqrt{N}}}
  ,
\end{equation*}
and from \cite[Lemma~14]{OrouSosh} we know that
  \begin{equation*}
  \Me{\LRI{m}{}{a}{\hat{G}}}
  =
  \Me{m_{\hat{G}}}
  ,\quad
  \Me{\LRI{m}{}{a}{\hat{\Gc}}}
  =
  \Me{m_{\hat{\Gc}}}
  .
  \end{equation*}
Therefore,
\begin{equation}\label{eq:large_eta_1}
  |\LRI{m}{}{a}{G}-m_G|
  =
  \OO{\frac{\varphi^{\zeta/2}}{\sqrt{N}}}
  =
  \OO{\frac{\varphi^{Q_{\zeta}}}{\sqrt{N}}}
  ,\quad
  |\LRI{m}{}{a}{\Gc}-m_G|
  =
  \OO{\frac{\varphi^{\zeta/2}}{\sqrt{N}}}
  =
  \OO{\frac{\varphi^{Q_{\zeta}}}{\sqrt{N}}}
  ,
\end{equation}
where we used that $m_G=m_{\Gc}$ and that by definition of $Q_{\zeta}$ (see the proof of Lemma~\ref{lem:lde}) $Q_{\zeta}>\zeta$.
With the same argument as in Theorem~\ref{thm:sce} we can thus show that with $\zeta$-high probability
  \begin{equation}\label{eq:large_1}
    \frac{1}{G_{ii}}
    =
    -w(1+m_G)+\frac{|z|^2}{1+m_G+\OO{\varphi^{Q_{\zeta}}N^{-1/2}}}+\OO{\varphi^{2Q_{\zeta}}N^{-1/2}}
    .
  \end{equation}
Indeed,
\begin{equation*}
  \frac{1}{G_{ii}}
  =
   -w(1+\LRI{m}{(i(a),\emptyset)}{a-1}{\Gc})+\OO{\varphi^{2Q_{\zeta}}N^{-1/2}}+\frac{|z|^2}{1+\LRI{m}{(i(a),i(a))}{a+1}{G}+\OO{\varphi^{Q_{\zeta}}N^{-1/2}}}
   .
\end{equation*}
But from the relations \eqref{eq:bord_guion_minor_estimate} and \eqref{eq:large_eta_1} we have
\begin{EA}{ll}
  \LRI{m}{(i(a),\emptyset)}{a-1}{\Gc}
  &= m_G +(m_{\Gc}-m_G)+(\LRI{m}{}{a-1}{\Gc}-m_{\Gc})+(\LRI{m}{(i(a),\emptyset)}{a-1}{\Gc}-\LRI{m}{}{a-1}{\Gc})
  \\
  &=m_G + 0 + \OO{\frac{\varphi^{Q_{\zeta}}}{\sqrt{N}}} + \OO{\frac{1}{N}} 
  =
  m_G+\OO{\frac{\varphi^{Q_{\zeta}}}{\sqrt{N}}}
  ,
\end{EA}
and similarly $\LRI{m}{(i(a),i(a))}{a+1}{G}=m_G+\OO{\varphi^{Q_{\zeta}}N^{-1/2}}$.

Repeating the proof of \cite[Lemma~6.12]{BourYauYin} we can show that $|1+m_G|$ is large enough with respect to the error term of order $\OO{\varphi N^{-1/2}}$.
Therefore we rewrite \eqref{eq:large_1} as 
\begin{equation}
  \frac{1}{G_{ii}}
  =
   -w(1+m_G)+\frac{|z|^2}{1+m_G}+\OO{\varphi^{2Q_{\zeta}}N^{-1/2}}
\end{equation}
The entries of the resolvent matrix are bounded by $\eta^{-1}$.
We end up with the following equation
\begin{equation}
  G_{ii}
  =
  \left[ -w(1+m_G)+\frac{|z|^2}{1+m_G}\right]^{-1}+\OO{\varphi^{2Q_{\zeta}}N^{-1/2}}
\end{equation}

Now we can conclude as in \cite[Lemma~6.12]{BourYauYin} that 
\begin{equation*}
  \sup_{w\in S_{z,\delta,\tilde{Q}_{\zeta}}\cap\{\eta=\eta_0\}}|m_G(z,w)-m_c(z,w)|
  =
  \OO{\varphi^{2Q_{\zeta}}N^{-1/2}}
\end{equation*}
with $\zeta$-high probability. The result follows using \eqref{eq:large_eta_1}.
\end{proof}

Now, following \cite{BourYauYin}, we establish a preliminary estimate for $\Lambda$ that shows that the bound \eqref{eq:alpha_condition} holds for any $z\in\tilde{\Zfr}_{\tau}$ and $w\in S_{z,\delta,S_{\tilde{Q}_{\zeta}}}$. 
We shall use Proposition~\ref{pr:sep} with the function
\begin{equation}
  \varphi^{2Q_{\zeta}}\sqrt{\frac{|w|^{-1/2}}{N\eta}}
\end{equation}
as $\tilde{\Psi}$.
\begin{thm}\label{thm:continuity_argument}
  For any $\zeta>0$ and any $z\in\tilde{\Zfr}_{\tau}$ 
  \begin{equation}
    \sup_{w\in S_{z,\delta,\tilde{Q}_{\zeta}}}\Lambda
    =
    \OO{\varphi^{2Q_{\zeta}}|w|^{-1/2}\left(\frac{|w|^{1/2}}{N\eta}\right)^{1/4}}
  \end{equation}
\end{thm}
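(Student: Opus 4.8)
The plan is a bootstrapping (continuity) argument in the imaginary part $\eta=\Im w$, descending from $\eta=1$ (where Proposition~\ref{pr:large} gives the bound) to the bottom $\eta=\varphi^{\tilde{Q}_{\zeta}}/(N|m_c|)$ of the set $S_{z,\delta,\tilde{Q}_{\zeta}}$. Fix $z\in\tilde{\Zfr}_{\tau}$, write $\tilde{\Psi}(z,w):=\varphi^{2Q_{\zeta}}\sqrt{|w|^{-1/2}/(N\eta)}$ and introduce the target control
\[
  \beta(z,w):=\varphi^{2Q_{\zeta}}\,|w|^{-1/2}\left(\frac{|w|^{1/2}}{N\eta}\right)^{1/4}.
\]
Using $|m_c|\sim|w|^{-1/2}$ (Lemma~\ref{lem:mc_properties_2}) and the lower bound $\eta\geq\varphi^{\tilde{Q}_{\zeta}}/(N|m_c|)$, one has $|w|^{1/2}/(N\eta)\leq\varphi^{-\tilde{Q}_{\zeta}}$ on $S_{z,\delta,\tilde{Q}_{\zeta}}$, hence $\tilde{\Psi}/|m_c|\leq\varphi^{2Q_{\zeta}-\tilde{Q}_{\zeta}/2}$ and $\beta/|m_c|\leq\varphi^{2Q_{\zeta}-\tilde{Q}_{\zeta}/4}$; enlarging $\tilde{Q}_{\zeta}$ relative to $Q_{\zeta}$ we may assume $\tilde{\Psi}|m_c|^{-1}\leq(\log N)^{-1}$ and $\beta\ll|m_c|$. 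Consequently, whenever $\Lambda\leq C\beta$ holds, the a priori hypothesis \eqref{eq:alpha_condition} holds with room to spare, so Theorem~\ref{thm:sce} applies; moreover under \eqref{eq:alpha_condition} one has $\Psi=\sqrt{(\Im m_c+\Lambda)/(N\eta)}+(N\eta)^{-1}\lesssim\sqrt{|m_c|/(N\eta)}$ (the last term is smaller since $N\eta|m_c|\geq\varphi^{\tilde{Q}_{\zeta}}$), so the random error $\varphi^{2Q_{\zeta}}\Psi$ in Theorem~\ref{thm:sce} is dominated by the \emph{deterministic} quantity $\tilde{\Psi}$. Thus the system \eqref{eq:sce_mg}--\eqref{eq:sce_mgc} holds with error $\OO{\tilde{\Psi}}$, which is exactly the input required by Proposition~\ref{pr:sep}.

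Next, discretize. By Lemma~\ref{lem:derivative_estimates} the map $w\mapsto\Lambda(z,w)$, as well as $m_c$, $\tilde{\Psi}$ and $\beta$, are Lipschitz on $S_{z,\delta,\tilde{Q}_{\zeta}}$ with constant $\OO{\eta^{-2}}=\OO{N^2}$ (here $\eta\geq cN^{-1}$ on this set). Hence it suffices to prove $\Lambda\leq C\beta$, with $\zeta$-high probability, at every point of an $N^{-10}$-lattice $\mathcal{L}\subset S_{z,\delta,\tilde{Q}_{\zeta}}$: a union bound over the $\OO{N^{C}}$ lattice points preserves the estimate $N^{C}e^{-\varphi^{\zeta}}$, and the Lipschitz bound then extends it to all $w$ (the increment between neighbours, $\OO{N^{-8}}$, is negligible next to $\beta\gtrsim\varphi^{2Q_{\zeta}}N^{-1/4}$). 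Because the lattice is this fine, $\Lambda$, $\tilde{\Psi}$ and $|w-\lambda_{\pm}|$ each vary by a factor arbitrarily close to $1$ between neighbours, which is what makes the bounded-ratio gaps of Proposition~\ref{pr:sep} usable.

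The bootstrapping is carried out on the intersection of the $\zeta$-high-probability events supplied by Proposition~\ref{pr:large} and by Theorem~\ref{thm:sce} at each lattice point; on this event the remaining argument is deterministic. Fix $E$ with $(1+\delta)^{-1}\max\{0,\lambda_-\}\leq E\leq(1+\delta)\lambda_+$ and run down the vertical lattice segment $\eta\searrow\varphi^{\tilde{Q}_{\zeta}}/(N|m_c|)$. At the top, $\eta=1$, Proposition~\ref{pr:large} gives $\Lambda\leq\varphi^{Q_{\zeta}}N^{-1/2}\leq C\beta$. Assume $\Lambda(E+i\eta)\leq C\beta$ at a lattice point; at the next point $w'=E+i\eta'$ the Lipschitz bound gives $\Lambda(w')\leq 2C\beta(w')\leq\alpha|m_c(w')|$, so Theorem~\ref{thm:sce} and Proposition~\ref{pr:sep} apply at $w'$. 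In each of the three regimes of $|w'-\lambda_{\pm}|$, the weak a priori input needed by Proposition~\ref{pr:sep} (namely $\Lambda/|m_c|\leq\sqrt{\tilde{\Psi}/|m_c|}$ in Case~1, $\Lambda\leq 2M\tilde{\Psi}|w'-\lambda_{\pm}|^{-1/2}$ in Case~2, $\Lambda\leq 2M\tilde{\Psi}^{1/2}$ in Case~3) follows from $\Lambda(w')\leq 2C\beta(w')$, since $\beta$ is far below each of these thresholds; the proposition then upgrades it to the strong bound, which in every regime is $\leq C\beta(w')$ — for this one checks the elementary inequalities $M\tilde{\Psi}\leq\beta$ (Case~1), $M^{1/4}\tilde{\Psi}^{1/2}\leq\beta$ (Case~2, using $|w'-\lambda_{\pm}|\geq M^{3/2}\tilde{\Psi}$) and $M\tilde{\Psi}^{1/2}\leq\beta$ (Case~3, using $|w'|\sim1$ near $\lambda_{\pm}$), all valid because $\tilde{\Psi}$ and $\beta$ both carry $\varphi^{2Q_{\zeta}}$ while $\tilde{\Psi}\ll1$ and $M\ll\varphi^{Q_{\zeta}}$. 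Hence ``$\Lambda\leq C\beta$'' propagates down the whole segment, then over all of $\mathcal{L}$, and finally over $S_{z,\delta,\tilde{Q}_{\zeta}}$ by continuity, which is the claim.

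The main obstacle is the neighbourhood of the edges $\lambda_{\pm}$: as noted before Proposition~\ref{pr:Gamma_inv}, the stability operator $\Gamma$ degenerates there, so Proposition~\ref{pr:sep} supplies only a gap of bounded multiplicative size (Cases~2 and 3), not a polynomial one. This is why the lattice must be taken polynomially fine — by Lemma~\ref{lem:derivative_estimates} this is enough — so that $\Lambda$ cannot cross the gap in a single step, and why one must carefully match the three distinct ``strong'' rates of Proposition~\ref{pr:sep} against the single target rate $\beta$ while tracking, via Lemmas~\ref{lem:mc_properties_0}--\ref{lem:mc_properties_2}, which regime of $(z,w)$ one is in. Away from $\lambda_{\pm}$ (Case~1) the argument is the familiar one and in fact yields the stronger bound $\Lambda=\OO{\tilde{\Psi}}$; the rate appearing in the statement is forced by the edge.
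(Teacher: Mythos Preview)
Your proposal is correct and follows essentially the same approach as the paper: start at $\eta=1$ via Proposition~\ref{pr:large}, feed the deterministic control parameter $\tilde{\Psi}=\varphi^{2Q_{\zeta}}\sqrt{|w|^{-1/2}/(N\eta)}$ into Proposition~\ref{pr:sep} to obtain a range gap for $\Lambda$, and bootstrap down an $N^{-K}$-net in $\eta$ using the Lipschitz bound of Lemma~\ref{lem:derivative_estimates}, then extend by continuity. One small point to tidy up: as the paper does, it is cleaner to propagate the stronger invariant $\Lambda=\OO{\tilde{\Psi}}$ in the bulk regime (Case~1) rather than the uniform $\Lambda\leq C\beta$, since with your $\tilde{\Psi}$ one has $\sqrt{\tilde{\Psi}/|m_c|}\sim\varphi^{Q_{\zeta}}(|w|^{1/2}/(N\eta))^{1/4}$ while $\beta/|m_c|\sim\varphi^{2Q_{\zeta}}(|w|^{1/2}/(N\eta))^{1/4}$, so $2C\beta$ does not literally sit below the Case~1 threshold; your final paragraph already acknowledges that Case~1 yields $\Lambda=\OO{\tilde{\Psi}}$, which makes the bootstrap consistent there.
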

\begin{proof}
  Following the approach used by Bourgade, Yau and Yin in \cite{BourYauYin}, we prove the theorem in two steps.
Firstly we show that  with $\zeta$-high probability the bound 
\begin{equation}\label{eq:weak_concentration1}
  \Lambda(z,w)
    =
    \OO{\varphi^{2Q_{\zeta}} |w|^{-1/2}\left(\frac{|w|^{1/2}}{N\eta}\right)^{1/4}}
\end{equation}
holds for $N^{-K}$-net in $\tilde{S}_{\tilde{Q}_{\zeta}}$ for $K>0$ big enough.
Next, we use the continuity properties of $\Lambda$ to extend the result to the whole set  $\tilde{S}_{\tilde{Q}_{\zeta}}$.

The first part is a bootstrapping type argument.
We fix $E$ and consider firstly $\eta=\OO{1}$, for which \eqref{eq:weak_concentration1} holds by Proposition~\ref{pr:large}.
Then we show that if we decrease $\eta$ then condition \eqref{eq:alpha_condition} still holds, and thus we can apply Proposition~\ref{pr:sep} to get a gap in the range of possible values of $\Lambda$.
From the continuity properties of $\Lambda$ we deduce that $\Lambda$ stays below the gap and that the weak estimate \eqref{eq:weak_concentration1} holds for this smaller choice of $\eta$.
We continue these iterations as long as $E+\sqrt{-1}\eta$ stays in $S_{z,\delta,\tilde{Q}_{\zeta}}$.
We now provide the detailed proof.

Let $K>0$ and let $\eta>0$ such that $E+\sqrt{-1}(\eta-N^{-K})\in S_{z,\delta,\tilde{Q}_{\zeta}}$.
As we fix $z$ and $E$, we can introduce a simplified notation $\Lambda(\eta):=\Lambda(z,E+\sqrt{-1}\eta)$.
Suppose firstly that we are not in the neighbourhood of $\lambda_{\pm}$ and that
\begin{equation}
  \Lambda(\eta)
  =
  \OO{\varphi^{2Q_{\zeta}}\left(\frac{|w|^{-1/2}}{N\eta}\right)^{1/2}}
  .
\end{equation}
Then 
\begin{EA}{rcl}
  \Lambda(\eta-N^{-K})
  &\,\leq&\,
  \max_{a\in \ZZ/n\ZZ} \left(|\LRI{m}{}{a}{G}(\eta)-m_c(\eta)|+|\LRI{m}{}{a}{\Gc}(\eta)-m_c(\eta)|\right)
  \\
  &&+\max_{a\in \ZZ/n\ZZ} \left(|\LRI{m}{}{a}{G}(\eta)-\LRI{m}{}{a}{G}(\eta-N^{-K})|+|\LRI{m}{}{a}{\Gc}(\eta)-\LRI{m}{}{a}{\Gc}(\eta-N^{-K})|\right)
  + |m_c(\eta)-m_c(\eta-N^{-K})|
  \\
  &\,\leq&\,
  \OO{\varphi^{2Q_{\zeta}}\left(\frac{|w|^{-1/2}}{N\eta}\right)^{1/2}} 
  \\\EAy \label{eq:lambda_bootstrap}
  &&+ N^{-K}\sup_{w\in S_{z,\delta,\tilde{Q}_{\zeta}}}\max_{a\in \ZZ/n\ZZ}\left(\left|\frac{\partial \LRI{m}{}{a}{G}}{\partial \eta}\right|+\left|\frac{\partial \LRI{m}{}{a}{\Gc}}{\partial \eta}\right|\right)+N^{-K}\sup_{w\in S_{z,\delta,\tilde{Q}_{\zeta}}}\left|\frac{\partial m_c}{\partial \eta}\right|
  .
\end{EA}
Note that
\begin{equation}
  \left(\frac{|w|^{-1/2}}{N\eta}\right)^{1/2}
  =
  |w|^{-1/2}\left(\frac{1}{|w|^{-1/2}N\eta}\right)^{1/2}
  \leq
   |m_c|\left(\frac{1}{|m_c|N\eta}\right)^{1/4}
\end{equation}
From the definition of $S_{z,\delta,\tilde{Q}_{\zeta}}$ (see \eqref{eq:s_set}) we have that on this set $\eta\geq N^{-2}$. According to Lemma~\ref{lem:derivative_estimates} if we take $K>0$ big enough then there exists $N_0\in\NN$ such that for all $N\geq N_0$
\begin{equation}\label{eq:weak_concentration2}
  \Lambda(\eta-N^{-K})
  \leq
  \alpha|m_c(\eta-N^{-K})|
  .
\end{equation}
Moreover, from the boundedness of $|m_c|^{-1}\sim |w|^{1/2}$ and the fact that
\begin{equation*}
  \left|\frac{\partial}{\partial \eta} (|w|^{1/2}\eta)^{-1/2}\right|
  =
  \OO{\eta^{-7/4}}
\end{equation*}
we obtain that 
\begin{equation}\label{eq:weak_concentration3}
 \frac{\Lambda(\eta-N^{-K})}{|m_c(\eta-N^{-K})|}
  \leq
  C\left(\varphi^{2Q_{\zeta}} \left(\frac{|E+\sqrt{-1}(\eta-N^{-K})|^{-1/2}}{N(\eta-N^{-K})}\right)^{1/2}\frac{1}{m_c(\eta-N^{-K})}\right)^{1/2}
  .
\end{equation}
By \eqref{eq:weak_concentration2} we see that Proposition~\ref{pr:sep} can be applied to $\Lambda(\eta-N^{-K})$, and by \eqref{eq:weak_concentration3} we see that
\begin{equation*}
  \Lambda(\eta-N^{-K})
  =
  \OO{\varphi^{2Q_{\zeta}} \left(\frac{|E+\sqrt{-1}(\eta-N^{-K})|^{-1/2}}{N(\eta-N^{-K})}\right)^{1/2}}
  .
\end{equation*}
Suppose now that we are close to $\lambda_{\pm}$ and that we have
\begin{equation}
  \Lambda(\eta)
  =
  \OO{\varphi^{2Q_{\zeta}}\left(\frac{|w|^{-1/2}}{N\eta}\right)^{1/4}}
  .
\end{equation}
Note that in this case $|w|\sim 1$.
Therefore, as in \eqref{eq:lambda_bootstrap} we have that
\begin{EA}{rcl}
  \Lambda(\eta-N^{-K})
  &\,\leq&\,
  \OO{\varphi^{2Q_{\zeta}}\left(\frac{1}{N\eta}\right)^{1/4}} + N^{-K}\sup_{w\in S_{z,\delta,\tilde{Q}_{\zeta}}}\left(\max_{a\in \ZZ/n\ZZ}\left(\left|\frac{\partial \LRI{m}{}{a}{G}}{\partial \eta}\right|+\left|\frac{\partial \LRI{m}{}{a}{\Gc}}{\partial \eta}\right|\right)+ \left|\frac{\partial m_c}{\partial \eta}\right|  \right)
  \\
  &\,\leq&\,
  \alpha|m_c(\eta-N^{-K})|
\end{EA}
for $N$ sufficiently large.
Again, by Proposition~\ref{pr:sep} and continuity of $\Lambda$ we get that
\begin{equation}
  \Lambda(\eta-N^{-K})
  =
  \OO{\varphi^{2Q_{\zeta}}\left(\frac{|E+\sqrt{-1}(\eta-N^{-K})|^{-1/2}}{N(\eta-N^{-K})}\right)^{1/4}}
  .
\end{equation}
We showed that there exists $K>0$ such that if \eqref{eq:weak_concentration1} holds for $E+\sqrt{-1}\eta\in S_{z,\delta,\tilde{Q}_{\zeta}}$ with $\zeta$-high probability, then with $\zeta$-high probability \eqref{eq:weak_concentration1} holds for $E+\sqrt{-1}(\eta-N^{-K})$ with the same constant in $\OO{\, }$, as long as $E+\sqrt{-1}(\eta-N^{-K})\in S_{z,\delta,\tilde{Q}_{\zeta}}$.
Let $K>0$ and let $\Theta_N(K):=\{k N^{-K}+\sqrt{-1} l N^{-K} \,|\,k,l\in\ZZ\}\subset\CC$. 
From Proposition~\ref{pr:large} we know that \eqref{eq:weak_concentration1} holds for any $w\in S_{z,\delta,\tilde{Q}_{\zeta}}\cap\{\eta = \varepsilon\}$.
Starting from $w\in\ S_{z,\delta,\tilde{Q}_{\zeta}}\cap \Theta(K)$ which are close to $\{\eta=\varepsilon\}$, we can step by step decrease the imaginary part of $w$ and  show that the bound \eqref{eq:weak_concentration1} holds for all $w\in S_{z,\delta,\tilde{Q}_{\zeta}}$ with $\zeta$-high probability.
We can finish the proof by using Lemma~\ref{lem:derivative_estimates} and continuity properties of $m_c$ to extend \eqref{eq:weak_concentration1} to the whole set $S_{z,\delta,\tilde{Q}_{\zeta}}$.

\end{proof}

The important consequence of Theorem~\ref{thm:continuity_argument} is that for $z\in \Zfr_\tau$ and $w\in S_{z,\delta,\tilde{Q}_{\zeta}}$ with $\zeta$-high probability all the approximate equations \eqref{eq:sce_1}-\eqref{eq:sce_mgc} hold.
We can expand this set of relations by adding the approximate equations for the individual entries of the resolvent matrices for the minors.
\begin{cor}
\label{cor:weak_concentration_entries}
Let $\TT,\UU\subset \{1,\ldots,nN\}$ such that $|\TT|+|\UU|\leq \varphi^{2Q_{\zeta}}$.
For any $\zeta,\tau>0$ for any $z\in\Zfr_{\tau}$ and $w\in\tilde{S}_{z,\delta,\tilde{Q}_{\zeta}}$ with $\zeta$-high probability the following holds

  \begin{EA}{ll} \EAy \label{eq:weak_concentration_entries_1}
    \LRI{G}{(\TT,\UU)}{a}{ii}
    =
    \frac{1}{-w(1+\LRI{m}{}{a-1}{\Gc})}+\OO{\varphi^{2Q_{\zeta}}\Psi}
    ,
    &\quad
    \mbox{ if }
    \quad
    i(a)\in \UU
    ,
    \\ \EAy \label{eq:weak_concentration_entries_2}
    \LRI{G}{(\TT,\UU)}{a}{ii}
    =
    \left[-w(1+\LRI{m}{}{a-1}{\Gc})+\frac{|z|^2}{1+\LRI{m}{}{a+1}{G}} \right]^{-1}+\OO{\varphi^{2Q_{\zeta}}\Psi}
    ,
    &\quad
    \mbox{ if }
    \quad
    i(a)\notin \UU
    ,
    \\ \EAy \label{eq:weak_concentration_entries_3}
    G_{kl}^{(\TT,\UU)}
    =
    \OO{\varphi^{2Q_{\zeta}}\Psi}
    ,
    &\quad
    \mbox{ if }
    \quad
    k\neq l
    ,
  \end{EA}
and similarly when changing the rôles of $G,\UU$ and $\Gc, \TT$.
\end{cor}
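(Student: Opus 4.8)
The plan is to deduce Corollary~\ref{cor:weak_concentration_entries} from Theorem~\ref{thm:continuity_argument} (which gives the relations \eqref{eq:sce_1}--\eqref{eq:sce_mgc} for the full matrices) by an induction on $|\TT|+|\UU|$, using the minor--removal identities of Lemma~\ref{lem:minor_differences} together with Lemma~\ref{lem:bord_guion_minor_estimate} to control the cost of passing from a matrix to one of its minors. The base case $\TT=\UU=\emptyset$ is exactly \eqref{eq:sce_1}--\eqref{eq:sce_4} (for the diagonal entries) and \eqref{eq:sce_schur_1}--\eqref{eq:sce_schur_2} combined with Lemma~\ref{lem:lde} (for the off--diagonal entries), all of which hold with $\zeta$-high probability on $z\in\Zfr_\tau$, $w\in S_{z,\delta,\tilde Q_\zeta}$ by Theorem~\ref{thm:continuity_argument} and Theorem~\ref{thm:sce}. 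Since the number of sets $(\TT,\UU)$ with $|\TT|+|\UU|\le\varphi^{2Q_\zeta}$ is bounded by $(nN)^{\varphi^{2Q_\zeta}}=e^{\OO{\varphi^{2Q_\zeta}\log N}}$, and each individual estimate holds with $\zeta'$-high probability for a slightly larger $\zeta'$, a union bound preserves $\zeta$-high probability over all admissible $(\TT,\UU)$ simultaneously; this is the standard device for upgrading "for each minor" to "for all small minors at once."

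The inductive step proceeds as follows. Assume the three relations hold for all minors indexed by sets of total size $\le s$, and take $|\TT|+|\UU|=s+1$. Write $\TT=\TT'\cup\{p\}$ (the case where the extra index lies in $\UU$ is symmetric, using the $\Gc$-version of the identities). For the diagonal entries, apply \eqref{eq:minor_differences_1}:
\begin{equation*}
  G^{(\TT',\UU)}_{ii}-G^{(\TT,\UU)}_{ii}
  =
  \frac{G^{(\TT',\UU)}_{ip}G^{(\TT',\UU)}_{pi}}{G^{(\TT',\UU)}_{pp}}
  .
\end{equation*}
By the induction hypothesis the numerator is $\OO{(\varphi^{2Q_\zeta}\Psi)^2}$ (off-diagonal entries of a minor of size $s$), while $G^{(\TT',\UU)}_{pp}$ is, again by induction, either $\OO{|w|^{-1/2}}$ from below via \eqref{eq:weak_concentration_entries_1}--\eqref{eq:weak_concentration_entries_2} and Lemma~\ref{lem:prop_of_mc_close_functions}(i), so the whole correction is $\OO{\varphi^{2Q_\zeta}\Psi\cdot\varphi^{2Q_\zeta}\Psi\,|w|^{1/2}}$, which on $S_{z,\delta,\tilde Q_\zeta}$ is absorbed into $\OO{\varphi^{2Q_\zeta}\Psi}$ provided $\tilde Q_\zeta$ is taken large enough (here one uses $\varphi^{2Q_\zeta}\Psi\le\varphi^{-Q_\zeta}|m_c|$, exactly as in the proof of Theorem~\ref{thm:sce}). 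One also has to note that removing the index $p$ does not change which of \eqref{eq:weak_concentration_entries_1} or \eqref{eq:weak_concentration_entries_2} applies: the dichotomy is governed by whether $i(a)\in\UU$, and $\UU$ is unchanged; moreover $\LRI{m}{}{a\pm1}{G}$ and $\LRI{m}{}{a\pm1}{\Gc}$ differ from their minor counterparts by $\OO{(|\TT|+|\UU|)/(N\eta)}=\OO{\varphi^{2Q_\zeta}/(N\eta)}\le\OO{\varphi^{2Q_\zeta}\Psi}$ by Lemma~\ref{lem:bord_guion_minor_estimate}, so the right-hand sides of \eqref{eq:weak_concentration_entries_1}--\eqref{eq:weak_concentration_entries_2} may be written with the full-matrix partial traces $\LRI{m}{}{a\mp1}{\cdot}$ at the stated error. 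For the off-diagonal entries with $k\ne l$ one uses \eqref{eq:minor_differences_1} once more,
\begin{equation*}
  G^{(\TT',\UU)}_{kl}-G^{(\TT,\UU)}_{kl}
  =
  \frac{G^{(\TT',\UU)}_{kp}G^{(\TT',\UU)}_{pl}}{G^{(\TT',\UU)}_{pp}}
  ,
\end{equation*}
whose right-hand side is again of order $\OO{(\varphi^{2Q_\zeta}\Psi)^2\,|w|^{1/2}}=\oo{\varphi^{2Q_\zeta}\Psi}$ by induction, so \eqref{eq:weak_concentration_entries_3} is inherited. (For the "base of the off-diagonal bound" one needs \eqref{eq:weak_concentration_entries_3} for $\TT=\UU=\emptyset$, which follows from the second identity in \eqref{eq:sce_schur_1}, the estimate $\bigl|\yb_i^*\Gc^{(\cdots)}\yb_j\bigr|=\OO{\varphi^{Q_\zeta}\Psi}$ coming from \eqref{eq:lde_1} with $i\ne j$, and the two-sided bound $|G_{ii}|\sim|w|^{-1/2}$.)

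The main obstacle is bookkeeping the error propagation so that the exponent of $\varphi$ does not grow with the number of removed indices: naively each application of \eqref{eq:minor_differences_1} could inflate the constant, but because each step \emph{squares} the current $\Psi$-error and multiplies by the bounded factor $|G_{pp}|^{-1}\sim|w|^{1/2}$, and because on $S_{z,\delta,\tilde Q_\zeta}$ one has $\varphi^{2Q_\zeta}\Psi\,|w|^{1/2}\le\varphi^{-Q_\zeta}$, the geometric series $\sum_{m\ge0}(\varphi^{-Q_\zeta})^m$ of the accumulated corrections over all $\le\varphi^{2Q_\zeta}$ removed indices stays $\OO{\varphi^{2Q_\zeta}\Psi}$ with a \emph{fixed} constant. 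Making this uniform over the combinatorially large family of sets $(\TT,\UU)$ — and in particular checking that the $\zeta$-high-probability exceptional events, summed over all such sets, still have probability $\le N^C e^{-\varphi^\zeta}$ — is the only genuinely delicate point, and it is handled exactly as the analogous step in \cite{BourYauYin} by choosing the probability exponent in Lemma~\ref{lem:lde} and Theorem~\ref{thm:sce} slightly larger than $\zeta$ at the outset.
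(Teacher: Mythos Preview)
Your inductive strategy via Lemma~\ref{lem:minor_differences} is different from the paper's route, which applies the Schur complement formula (Lemma~\ref{lem:sce_schur}) \emph{directly} to the minor $G^{(\TT,\UU)}$ and then invokes Lemma~\ref{lem:lde} (whose error term already carries the $(|\TT|+|\UU|)/(N\eta)$ correction for arbitrary minors) and Lemma~\ref{lem:bord_guion_minor_estimate} to replace the minor partial traces by $\LRI{m}{}{a\pm 1}{\cdot}$. In that approach the dichotomy $i(a)\in\UU$ versus $i(a)\notin\UU$ falls out structurally: when $i(a)\in\UU$ the entries $\Gc^{(\TT i,\UU)}_{i(a),\cdot}$ vanish by convention, so the $|z|^2\,\Gc^{(\TT i,\UU)}_{i(a)i(a)}$ contribution to $\yb_i^*\Gc^{(\TT i,\UU)}\yb_i$ simply disappears, yielding \eqref{eq:weak_concentration_entries_1}; otherwise one feeds in \eqref{eq:sce_schur_2} for that term and obtains \eqref{eq:weak_concentration_entries_2}. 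No induction is needed.

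Your induction is workable for enlarging $\TT$ (the ``outer'' set for $G$), but the sentence ``the case where the extra index lies in $\UU$ is symmetric, using the $\Gc$-version of the identities'' hides a genuine gap. Adding an index to $\UU$ for $G$ is governed by \eqref{eq:minor_differences_2}, not the rank-one formula \eqref{eq:minor_differences_1}, and its numerator involves the vectors $G^{(\TT,\UU k)}y_k^*$ rather than resolvent entries; this requires a separate LDE-type bound you do not supply. More seriously, when the index added to $\UU$ is $p=i(a)$ itself, the correction $G^{(\TT,\UU')}_{ii}-G^{(\TT,\UU' i(a))}_{ii}$ is \emph{not} $\OO{\varphi^{2Q_\zeta}\Psi}$: it is precisely the $\OO{1}$ jump between the right-hand sides of \eqref{eq:weak_concentration_entries_1} and \eqref{eq:weak_concentration_entries_2}, since $(y_{i(a)}G^{(\TT,\UU' i(a))})_{i(a)}$ contains the term $-z\,G^{(\TT,\UU' i(a))}_{i(a)i(a)}\sim |m_c|$. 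So the inductive step as written cannot produce the case split in the statement; you would have to treat this transition by a direct Schur computation anyway, at which point you are back to the paper's argument.
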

\begin{proof}
  We use a similar argument as in the proof of Corollary~1 in \cite{NemiNHPO}, that relies on the Schur complement formula, Theorem~\ref{thm:continuity_argument} and lemmas \ref{lem:sce_schur}, \ref{lem:prop_of_mc_close_functions}, \ref{lem:lde} and \ref{lem:bord_guion_minor_estimate}.
\end{proof}


\subsection{Strong concentration}

In this section we finish the proof of Theorem~\ref{thm:conc_main}.
Note that due to Theorem~\ref{thm:continuity_argument} the initial bound for $\Lambda$ \eqref{eq:alpha_condition} holds on the set $S_{z,\delta,\tilde{Q}_{\zeta}}$ with $\zeta$-high probability, and thus on this set Theorem~\ref{thm:sce} and Proposition~\ref{pr:sep} hold.
Recall, that in the proof of Theorem~\ref{thm:continuity_argument} we used the stability of the system of approximate equations
\eqref{eq:sce_mg}-\eqref{eq:sce_mgc} to obtain the following estimates for $\Lambda$ depending on the error term in the self-consistent equations:
\newline
\emph{suppose that the error terms in \eqref{eq:sce_mg}-\eqref{eq:sce_mgc} are bounded by $\tilde{\Psi}$, which is a deterministic function strictly decreasing in $\eta$ near the points $w=\lambda_{\pm}$; suppose that $\Lambda\leq \tilde{\Psi}$ for some $\eta=\OO{1}$; then with $\zeta$-high probability for any $w\in S_{z,\delta,\tilde{Q}_{\zeta}}$
\begin{itemize}
\item[(i)] if $w$ is far enough from $\lambda_{\pm}$, then $\Lambda\leq \tilde{\Psi}$,
\item[(ii)] if $w$ is in the neighbourhood of $\lambda_{\pm}$, then $\Lambda \leq (\tilde{\Psi})^{1/2}$.
\end{itemize}}
Therefore, improving the bound of the error terms in \eqref{eq:sce_mg}-\eqref{eq:sce_mgc} will lead us to a better estimate of $\Lambda$.

Following the idea from \cite{BourYauYin}, we can use Theorem~\ref{thm:continuity_argument} to obtain the system of second order self-consistent equation 
\begin{EA}{ll}
  -\frac{1}{w\LRI{m}{}{a}{G}}
  =&
  1+\LRI{m}{}{a-1}{\Gc}-\frac{|z|^2}{w(1+\LRI{m}{}{a+1}{G})} +\Ec_a
  ,\\
  -\frac{1}{w\LRI{m}{}{a}{\Gc}}
  =&
  1+\LRI{m}{}{a+1}{G}-\frac{|z|^2}{w(1+\LRI{m}{}{a-1}{\Gc})} +\tilde{\Ec}_a
  ,
\end{EA}
where $\Ec_a:=\OO{\varphi^{4Q_{\zeta}}\Psi^2|m_c|^{-1}+\Ec_a^{(1)}+\Ec_a^{(2)}}$ and 
\begin{EA}{l}
  \Ec_a^{(1)}
  :=
  \frac{1}{N}\sum_{i=1}^{N}\left[\left(\LRI{m}{}{a-1}{\Gc}-\LRI{m}{(i(a),\emptyset)}{a-1}{\Gc}\right)+\left(\LRI{m}{}{a+1}{G}-\LRI{m}{(i(a),i(a))}{a+1}{G}\right) \right]
  ,\\
  \Ec_a^{(2)}
  :=
  \frac{1}{N}\sum_{i=1}^{N}\left[\LRI{\Zc}{}{a}{i}+\LRI{Z}{i(a)}{a}{i}\right]
\end{EA}
and similarly for $\tilde{\Ec}_a$.
In the next two lemmas we show that $\Ec_a^{(1)}$ and $\Ec_a^{(2)}$ are of order $\OO{\varphi^{4Q_{\zeta}}(\Psi[\tilde{\Lambda}])^2|m_c|^{-1}}$, where
\begin{equation}
  \Psi[\tilde{\Lambda}]
  :=
  \sqrt{\frac{\Im m_c +\tilde{\Lambda}}{N\eta}}+\frac{1}{N\eta}
  ,
\end{equation}
and $\tilde{\Lambda}$ is some deterministic estimate for $\Lambda$ satisfying $\tilde{\Lambda}\leq \alpha|m_c|$.
The arguments in the lemmas \ref{lem:strong_error_1} and \ref{lem:strong_error_2} are similar to the proof of Lemma~18 in \cite{NemiNHPO} and  Lemma~7.3 in \cite{BourYauYin} respectively, therefore we provide here only a sketch proof, indicating the ideas that were used, but omitting the technical details.

\begin{lem}\label{lem:strong_error_1}
With $\zeta$-high probability for any $w\in S_{z,\delta,Q_{\zeta}}$
  \begin{equation}
    \sum_{a=1}^n \Ec^{(1)}_a+\tilde{\Ec}^{(1)}_a
    =
    \OO{\varphi^{4Q_{\zeta}}\Psi^2|m_c|^{-1}}
    .
  \end{equation}
\end{lem}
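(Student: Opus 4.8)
The plan is to estimate $\Ec_a^{(1)}$ by writing each difference $\LRI{m}{}{a-1}{\Gc}-\LRI{m}{(i(a),\emptyset)}{a-1}{\Gc}$ (and likewise for $G$) via the resolvent minor expansion of Lemma~\ref{lem:minor_differences}, and then recognizing the resulting sum over $i$ as a quantity of the form $[\QQ S]$ to which the abstract decoupling lemma (Theorem~\ref{thm:abs_dec}) applies. More precisely, by \eqref{eq:minor_differences_2} we have
\begin{equation*}
  \LRI{\Gc}{}{a-1}{kk}-\LRI{\Gc}{(i(a),\emptyset)}{a-1}{kk}
  =
  -\frac{(\Gc^{(i(a),\emptyset)}\yb_{i(a)})_{k(a-1)}\,(\yb_{i(a)}^*\Gc^{(i(a),\emptyset)})_{k(a-1)}}{1+\yb_{i(a)}^*\Gc^{(i(a),\emptyset)}\yb_{i(a)}}
  ,
\end{equation*}
and summing over $k$ and using Lemma~\ref{lem:im_of_resolvent_diag_and_product} together with \eqref{eq:sce_2} to control the denominator as $|1+\yb_{i(a)}^*\Gc^{(i(a),\emptyset)}\yb_{i(a)}|\sim|w|^{1/2}$, one sees that each term $\LRI{m}{}{a-1}{\Gc}-\LRI{m}{(i(a),\emptyset)}{a-1}{\Gc}$ is, up to the lower-order deterministic multiplier $(\,-w\LRI{\Gc}{(i(a),\emptyset)}{a-1}{ii}\,)\cdot N^{-1}$, a bounded object whose average over $i$ is what we must bound. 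The point is that $\frac{1}{N}\sum_i$ of the ``diagonal'' part of these quantities reproduces (up to $\OO{\varphi^{2Q_\zeta}\Psi^2|m_c|^{-1}}$ errors coming from Corollary~\ref{cor:weak_concentration_entries}) a deterministic expression, so that only the fluctuating part remains.

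The key steps, in order, are: (1) expand $\Ec_a^{(1)}$ using Lemma~\ref{lem:minor_differences} to express it as $\frac{1}{N}\sum_i$ of ratios of bilinear forms in $\yb_{i(a)}$ (resp.\ $y_{i(a)}$) and the minor resolvents; (2) on the event $\Xi$ where \eqref{eq:alpha_condition} and the conclusions of Corollary~\ref{cor:weak_concentration_entries} hold with $\zeta$-high probability, replace the denominators by their deterministic approximants $-w(1+\LRI{m}{}{a}{G})$, incurring admissible errors; (3) subtract the $i$-independent deterministic main term, so that $\Ec_a^{(1)}$ becomes $[\QQ S]$ for suitable $S_i$, with $\QQ_i = 1-\EE_i$ the projection removing the expectation over the $i(a)$th column/row; (4) verify hypotheses (i)–(iii) of Theorem~\ref{thm:abs_dec} with $\Xc$ of order $\varphi^{2Q_\zeta}\Psi[\tilde\Lambda]$ and $\Yc$ a polynomial in $N$ — using the deterministic bound $\tilde\Lambda\le\alpha|m_c|$ to pin down $\Psi[\tilde\Lambda]$, Lemma~\ref{lem:bord_guion_minor_estimate} and Lemma~\ref{lem:minor_differences} for the stability of $S_{i,\AA}$ under further minors, and the crude bound $\|G\|,\|\Gc\|\le\eta^{-1}$ for (ii); (5) conclude by Markov's inequality and a union bound over $a$ that $\sum_a(\Ec_a^{(1)}+\tilde\Ec_a^{(1)})\prec (\Psi[\tilde\Lambda])^2|m_c|^{-1}$ on $\Xi$, which on $S_{z,\delta,Q_\zeta}$ with $\tilde\Lambda\le\alpha|m_c|$ simplifies to the claimed $\OO{\varphi^{4Q_\zeta}\Psi^2|m_c|^{-1}}$. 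Finally, an $N^{-K}$-net argument plus the derivative estimates of Lemma~\ref{lem:derivative_estimates} upgrades the pointwise bound to the uniform bound over $w\in S_{z,\delta,Q_\zeta}$.

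The main obstacle I expect is step~(4): checking condition (i) of the abstract decoupling lemma, namely that $\QQ_\AA S_i$ restricted to $\Xi$ decomposes as in \eqref{eq:abs_dec_cond_i} with $|S_{i,\AA}|\le\Yc(C_0\Xc|\AA|)^{|\AA|}$. This requires showing that each additional projection $\QQ_j$ applied to $S_i$ gains a factor of order $\Xc\sim\varphi^{2Q_\zeta}\Psi[\tilde\Lambda]$, which in turn relies on iterating the minor expansion \eqref{eq:minor_differences_1}–\eqref{eq:minor_differences_2} and controlling products of off-diagonal resolvent entries via \eqref{eq:weak_concentration_entries_3}, all while tracking the combinatorial factors $|\AA|^{|\AA|}$; this is exactly the bookkeeping that mirrors Lemma~18 of \cite{NemiNHPO}, and the adaptation to the block/cyclic structure here — where $\Psi$ must be replaced by its partial-trace analogues and the singularity of $m_c$ at $\lambda_\pm$ forces the deterministic parameter $\tilde\Lambda$ to be used in place of $\Lambda$ — is where the genuine work lies. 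The remaining steps are either direct citations of earlier lemmas or routine estimates, so I would state them briefly and refer to \cite{NemiNHPO} and \cite{BourYauYin} for the details, as the lemma statement itself announces.
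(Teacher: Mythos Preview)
You have conflated the strategies for $\Ec^{(1)}$ and $\Ec^{(2)}$. The abstract decoupling lemma is the tool the paper uses for $\Ec^{(2)}$ (Lemma~\ref{lem:strong_error_2}), where each summand $\LRI{\Zc}{}{a}{i}$ is only $\OO{\varphi^{2Q_\zeta}\Psi}$ and one genuinely needs cancellation in the average over $i$ to gain the extra factor of $\Psi$. For $\Ec^{(1)}$ no such gain is required: each individual term is already of the target size. The paper's proof simply expands, via \eqref{eq:minor_differences_1},
\[
  \LRI{m}{}{a+1}{G}-\LRI{m}{(i(a),\emptyset)}{a+1}{G}
  =
  \frac{1}{N}\sum_{k=1}^{N}\frac{G_{k(a+1),i(a)}\,G_{i(a),k(a+1)}}{G_{i(a),i(a)}}
  ,
\]
and then reads off from Corollary~\ref{cor:weak_concentration_entries} that every off-diagonal entry is $\OO{\varphi^{2Q_\zeta}\Psi}$ while the diagonal entry is $\sim m_c$, giving $\OO{\varphi^{4Q_\zeta}\Psi^2|m_c|^{-1}}$ for each fixed $i$ --- hence trivially for the average. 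The remaining piece $\LRI{m}{(i(a),\emptyset)}{a+1}{G}-\LRI{m}{(i(a),i(a))}{a+1}{G}$ is handled the same way after applying \eqref{eq:minor_differences_2}, as in Lemma~18 of \cite{NemiNHPO}.

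Beyond being unnecessarily heavy, your step~(3) is suspect: the summand $\LRI{m}{}{a-1}{\Gc}-\LRI{m}{(i(a),\emptyset)}{a-1}{\Gc}$ is not of the form $\QQ_i S_i$ (its $\EE_i$-expectation is not zero), and there is no evident ``$i$-independent deterministic main term'' whose subtraction would make it so. You could perhaps force things into the decoupling framework, but the payoff would be the same bound obtained here by a two-line entrywise estimate.
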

\begin{proof}
  Firstly, we use Lemma~\ref{lem:minor_differences} to rewrite $\Ec^{(1)}_a$ or $\tilde{\Ec}^{(1)}_a$ in a form that is easy to bound using the estimates for the entries of the resolvent matrix obtained in Corollary~\ref{cor:weak_concentration_entries}.
For example, 
\begin{equation}\label{eq:strong_error_1}
  \LRI{m}{}{a+1}{G}-\LRI{m}{(i(a),i(a))}{a+1}{G}
  =
  (\LRI{m}{}{a+1}{G}-\LRI{m}{(i(a),\emptyset)}{a+1}{G})+(\LRI{m}{(i(a),\emptyset)}{a+1}{G}-\LRI{m}{(i(a),i(a))}{a+1}{G})
  ,
\end{equation}
and
\begin{equation}
  \LRI{m}{}{a+1}{G}-\LRI{m}{(i(a),\emptyset)}{a+1}{G}
  =
  \frac{1}{N}\sum_{k=1}^{N}\LRI{G}{(i(a),\emptyset)}{a+1}{kk}-\LRI{G}{}{a+1}{kk}
  =
  \frac{1}{N}\sum_{k=1}^{N}\frac{\LRI{G}{}{a,a+1}{ki}\LRI{G}{}{a+1,a}{ik}}{\LRI{G}{}{a+1}{kk}}
  .
\end{equation}
By Corollary~\ref{cor:weak_concentration_entries} all the off-diagonal entries of $G$ are bounded by $\varphi^{2Q_{\zeta}}\Psi$, and for any $j\in\llbracket 1,nN\rrbracket$ $G_{jj} \sim m_c$.
Therefore we deduce that
\begin{equation}
  \LRI{m}{}{a+1}{G}-\LRI{m}{(i(a),\emptyset)}{a+1}{G}
  =
  \OO{\varphi^{4Q_{\zeta}}\frac{\Psi^2}{|m_c|}}
\end{equation}
with $\zeta$-high probability.

To bound the second term in \eqref{eq:strong_error_1}, we rewrite it using Lemma~\ref{lem:minor_differences} as
\begin{equation}
  \LRI{m}{(i(a),\emptyset)}{a+1}{G}-\LRI{m}{(i(a),i(a))}{a+1}{G}
  =
  \frac{1}{N}\sum_{k=1}^{N}\frac{\left[(G^{(i(a),i(a))}y_{i(a)}^*)(y_{i(a)}G^{(i(a),i(a)})\right]_{k(a+1),k(a+1)}}{1+y_{i(a)}G^{(i(a),i(a))} y_{i(a)}^*}
  .
\end{equation}
Proceeding as in the proof of Lemma~18 in \cite{NemiNHPO} and using the estimates of Corollary~\ref{cor:weak_concentration_entries}, we can show that this term is bounded by $\varphi^{4Q_{\zeta}}\Psi^2|m_c|^{-1}$.

All the other estimates follow using a similar argument.
\end{proof}

\begin{lem}\label{lem:strong_error_2}
Let $\zeta>0$.
Suppose that for any $w\in S_{z,\delta,\tilde{Q}_{\zeta}}$
\begin{equation}
  \Lambda
  \leq
  \tilde{\Lambda}
  \leq
  \alpha |m_c|
\end{equation}
with probability at least $1-e^{-p_N}$, where $\varphi\leq p_N \leq \varphi^{2\zeta}$.
Then with probability at least $1-e^{-p_N(\log N)^{-2}}$ for any $w\in S_{z,\delta,\tilde{Q}_{\zeta}}$
  \begin{equation}
    \sum_{a=1}^n \Ec^{(2)}_a+\tilde{\Ec}^{(2)}_a
    =
    \OO{\varphi^{4Q_{\zeta}}(\Psi[\tilde{\Lambda}])^{2}|m_c|^{-1}}
    .
  \end{equation}
\end{lem}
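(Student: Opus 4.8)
The plan is to follow the strategy of Lemma~7.3 in \cite{BourYauYin}, adapting it to the partial-trace setting. The quantity $\Ec^{(2)}_a = \frac{1}{N}\sum_i (\LRI{\Zc}{}{a}{i} + \LRI{Z}{i(a)}{a}{i})$ is an average, over $i$, of the fluctuation terms $\Mes{\yb_{i(a)}}{\yb_{i(a)}^*\Gc^{(\emptyset,i(a))}\yb_{i(a)}}$ (and the analogous $G$-term). Each individual summand is, by Lemma~\ref{lem:lde} and Corollary~\ref{cor:weak_concentration_entries}, of size $\OO{\varphi^{Q_\zeta}\Psi[\tilde\Lambda]}$ with high probability, but this naive bound is too weak: we need the \emph{average} to be of order $(\Psi[\tilde\Lambda])^2|m_c|^{-1}$, i.e. we must exploit the cancellation coming from the sum over $i$. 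The key observation is that $\LRI{Z}{i(a)}{a}{i}$ is, up to harmless factors, of the form $(1-\EE_i)[\,\cdot\,]$, so that $\EE_i \LRI{Z}{i(a)}{a}{i}=0$; hence $\sum_i \LRI{Z}{i(a)}{a}{i}$ is a sum of ``$\QQ$-type'' terms to which the abstract decoupling lemma (Theorem~\ref{thm:abs_dec}) applies.

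The steps I would carry out, in order, are the following. First, set up the framework of Theorem~\ref{thm:abs_dec}: take the index set $\mathcal I$ to be the entries of $X$, let $\mathcal I_i$ index the $i(a)$-th row and column of $X$ (so that $\EE_i = \EE_{\yb_{i(a)}}$ after suitably relabelling), put $S_i := N\,\LRI{Z}{i(a)}{a}{i}$ (or its $G$-analogue), and let $\Xi$ be the ``good event'' on which the conclusions of Theorem~\ref{thm:continuity_argument} and Corollary~\ref{cor:weak_concentration_entries} hold together with $\Lambda\leq\tilde\Lambda$; by hypothesis $\Pb[\Xi^c]\leq e^{-p_N}$, and choosing $p=p_N(\log N)^{-3/2}$ (even integer) makes assumption (iii) of Theorem~\ref{thm:abs_dec} hold. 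Second, verify assumption (i): for $\AA\subset\{1,\dots,N\}$ with $i\in\AA$ and $|\AA|\leq p$, expand $\QQ_\AA S_i$ using the resolvent minor identities of Lemma~\ref{lem:minor_differences} and Lemma~\ref{lem:sce_schur} — each application of a $\QQ_j$ produces an extra off-diagonal resolvent factor of size $\OO{\varphi^{2Q_\zeta}\Psi}$ and a denominator $\sim m_c$, so one gets a bound of the shape $\mathcal Y(C_0\mathcal X|\AA|)^{|\AA|}$ with $\mathcal X \sim \varphi^{2Q_\zeta}\Psi[\tilde\Lambda]|m_c|^{-1}$ and $\mathcal Y$ a fixed power of $N$ (coming from trivial $\eta^{-1}$ bounds on the exceptional event); the ``$\tilde S_{i,\AA}$'' piece is controlled by the crude bound $\|G\|\leq\eta^{-1}\leq N^{2}$. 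Third, assumption (ii) is the same crude $\eta^{-1}$ bound. Fourth, apply Theorem~\ref{thm:abs_dec} to conclude $\EE[\QQ S]^p \leq (Cp)^{4p}(\mathcal X^2+N^{-1})^p\mathcal Y^p$, and then convert this moment bound into a high-probability bound by Markov's inequality: since $[\QQ S] = \frac1N\sum_i \QQ_i S_i$ and $\QQ_i S_i = S_i$ (because $\EE_i S_i = 0$), this gives $\frac1N\sum_i \LRI{Z}{i(a)}{a}{i} = \OO{\varphi^{4Q_\zeta}(\Psi[\tilde\Lambda])^2|m_c|^{-1}}$ with probability at least $1-e^{-cp_N(\log N)^{-3/2}} \geq 1 - e^{-p_N(\log N)^{-2}}$. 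Fifth, repeat verbatim for $\LRI{\Zc}{}{a}{i}$ and for $\tilde\Ec^{(2)}_a$, and sum over the $n$ (fixed) values of $a$; finally, upgrade from a fixed $w$ to all $w\in S_{z,\delta,\tilde Q_\zeta}$ by a net argument using the derivative estimates of Lemma~\ref{lem:derivative_estimates}, exactly as in the proof of Theorem~\ref{thm:continuity_argument}.

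The main obstacle I anticipate is the verification of assumption (i) of the decoupling lemma with the \emph{correct} size of $\mathcal X$. One must show that applying the projections $\QQ_j$, $j\in\AA$, successively to $\LRI{Z}{i(a)}{a}{i}$ genuinely gains a factor $\Psi[\tilde\Lambda]|m_c|^{-1}$ per index — i.e. that each decoupled resolvent entry is off-diagonal (hence small by Corollary~\ref{cor:weak_concentration_entries}) rather than diagonal. This requires carefully tracking, through the expansion via \eqref{eq:minor_differences_1}–\eqref{eq:minor_differences_2}, which indices land in the minor-superscript sets $\TT,\UU$ and checking that the combinatorial structure never forces a diagonal term $\LRI{G}{(\cdot)}{a}{kk}\sim m_c$ to appear without an accompanying pair of small off-diagonal factors; this is precisely the place where working with partial traces $\LRI{m}{}{a}{G}$ rather than the full trace $m_G$ makes the bookkeeping more delicate than in \cite{BourYauYin}, since the block structure of $X$ forces the row/column indices $k(a+1)$ to live in specific blocks. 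The bound $|\TT|+|\UU|\leq \varphi^{2Q_\zeta}$ assumed in Corollary~\ref{cor:weak_concentration_entries} is what permits us to keep using those entrywise estimates throughout the expansion, as long as $p\leq\varphi^{2Q_\zeta}$, which holds since $p\leq p_N\leq\varphi^{2\zeta}$ and $Q_\zeta>\zeta$.
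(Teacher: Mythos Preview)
Your strategy is the same as the paper's: apply the abstract decoupling lemma (Theorem~\ref{thm:abs_dec}) with $\QQ_i$ the projection removing the expectation in the $i(a)$-th column (respectively row), and verify hypothesis~(i) by expanding $\QQ_{\AA}S_i$ through the resolvent--minor identities of Lemmas~\ref{lem:sce_schur} and~\ref{lem:minor_differences}, bounding each new factor via Corollary~\ref{cor:weak_concentration_entries}.  The paper takes $S_i=(w\,\LRI{G}{}{a}{ii})^{-1}$, so that the Schur formula~\eqref{eq:sce_schur_1} gives $\QQ_i S_i=-\LRI{\Zc}{}{a}{i}$ directly; your choice $S_i=N\,\LRI{Z}{i(a)}{a}{i}$ is equivalent after one application of $\QQ_i$, though the extra factor $N$ is spurious (it makes $[\QQ S]=\sum_i\LRI{Z}{i(a)}{a}{i}$ rather than $\frac1N\sum_i\LRI{Z}{i(a)}{a}{i}$).

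There is one place where your proposal, as written, would fail: the choice of $\Yc$.  You take $\Yc$ to be ``a fixed power of $N$'', but the output of Theorem~\ref{thm:abs_dec} is $\E[\QQ S]^p\leq (Cp)^{4p}(\Xc^2+N^{-1})^p\Yc^p$, so a polynomial $\Yc$ gives a useless estimate.  The paper takes $\Yc=|m_c|$, which is the natural size of $S_i=(w\,\LRI{G}{}{a}{ii})^{-1}\sim (w m_c)^{-1}\sim m_c$ on the good event; combined with your correct $\Xc\sim\varphi^{2Q_\zeta}\Psi[\tilde\Lambda]\,|m_c|^{-1}$ this yields $\Xc^2\Yc\sim\varphi^{4Q_\zeta}(\Psi[\tilde\Lambda])^2|m_c|^{-1}$, exactly the target.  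The crude bounds $|\tilde S_{i,\AA}|\leq \Yc N^{C_0|\AA|}$ and $\max_i|S_i|\leq \Yc N^{C_0}$ are still satisfied with $\Yc=|m_c|$ because the factor $N^{C_0|\AA|}$ absorbs all $\eta^{-1}$ losses on the exceptional event; in other words, $\Yc$ must be calibrated to the \emph{good}-event size of $S_i$, not to the worst case.
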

\begin{proof}
  The main tool in the proof of this lemma is the Abstract decoupling lemma (Theorem~\ref{thm:abs_dec}).
  We consider this lemma in the following setting: let $\Ic=\llbracket 1,nN\rrbracket$, $\Ic_i=\{i(a)\},i\in\llbracket 1,N\rrbracket$, $\QQ_i=\EE_{\yb_{i(a)}}$, and consider the random variables $S_i=(w \LRI{G}{}{a}{ii})^{-1}$.
Then $\LRI{\Zc}{}{a}{i}=\QQ_i S_i$ and it will be enough to show that the conditions \eqref{eq:abs_dec_cond_i}-\eqref{eq:abs_dec_cond_iii} in the hypothesis of the Abstract decoupling lemma hold with $\Xc=\varphi^{4Q_{\zeta}}\Psi[\tilde{\Lambda}]|m_c|^{-1}$ and $\Yc=|m_c|$.

Condition \eqref{eq:abs_dec_cond_ii} can be verified using the uniform subexponential decay condition for the entries of the matrix $X$, and condition \eqref{eq:abs_dec_cond_iii} holds by the assumption of the lemma.
Therefore, we need to show that  decomposition \eqref{eq:abs_dec_cond_i} holds for any subset $\AA\subset\llbracket 1,N\rrbracket$ with $|\AA|\leq p_N$.
It was shown in \cite{BourYauYin} that the existence of such decomposition can be deduced from the following set of estimates holding with $\zeta$-high probability for any $w\in S_{z,\delta,Q_{\zeta}}$:
\begin{EA}{ll}
  (i)&\quad \Lambda\leq \varphi^{-Q_{\zeta}}|w|^{-1/2}
  ,\quad 
  \Psi
  \leq 
  \varphi^{-Q_{\zeta}}|w|^{-1/2}
  ,\\
  (ii)&\quad \max_{i,j}|G_{ij}-m_c \delta_{ij}|
  \leq 
  \varphi^{2Q_{\zeta}}|w|^{-1/2}\left(\frac{|w|^{1/2}}{N\eta}\right)^{1/4}
  ,\\
  (iii)&\quad \mbox{for any }i\neq j\mbox{ and } \UU,\TT\subset \llbracket 1(a),N(a)\rrbracket:\quad |\TT|+|\UU|\leq p_N
  \\
   &\quad |(1-\EE_{\yb_{i(a)}})\yb_{i(a)}^*\Gc^{(i(a)\TT,\emptyset)}\yb_{i(a)}|+|(1-\EE_{\yb_{i(a)}})\yb_{i(a)}^*\Gc^{(i(a)j(a)\TT,\emptyset)}\yb_{j(a)}|\leq \varphi^{2Q_{\zeta}}\Psi
   ,\\
  & \quad |(1-\EE_{y_{i(a)}}) y^{(i(a))}_{i(a)}G^{(i(a),i(a)\UU)}(y^{(i(a))}_{i(a)})^*|+|(1-\EE_{y_{i(a)}}) y^{(i(a))}_{i(a)}G^{(i(a),i(a)j(a)\UU)}(y^{(i(a))}_{j(a)})^*|\leq \varphi^{2Q_{\zeta}}\Psi
  ,\\
  (iv)& \quad \mbox{for any }i\mbox{ and } \TT\subset \llbracket 1(a),N(a)\rrbracket:\quad |\TT|\leq p_N
  \\
  &\quad |\LRI{\Gc}{(i(a)\TT,\emptyset)}{a}{ii}-\frac{1}{w(1+\LRI{m}{(i(a)\TT,\emptyset)}{a+1}{G})}|\leq \varphi^{2Q_{\zeta}}\Psi
  .
\end{EA}
Note that $(i)$ and $(ii)$ follow from Theorem~\ref{thm:continuity_argument}, $(iii)$ follows from Lemma~\ref{lem:lde} and $(iv)$ was proven in Corollary\ref{cor:weak_concentration_entries}.
Therefore, we can repeat the argument used by Bourgade, Yau and Yin in the proof of the strong estimates of the Stieltjes transform of one non-Hermitian matrix to find the decomposition \eqref{eq:abs_dec_cond_i} in our setting.
See \cite[Section 7.2]{BourYauYin} for the detailed proof.
\end{proof}

We can now finish the proof of Theorem~\ref{thm:conc_main}.

Consider first the case when $w$ is far enough from $\lambda_{\pm}$.
From Theorem~\ref{thm:continuity_argument} we know that the initial bound
\begin{equation}
  \Lambda\leq \alpha |m_c| =:\tilde{\Lambda}_0
\end{equation}
holds, therefore we can take 
\begin{equation}
  \Psi[\tilde{\Lambda}_{0}]
  =
  \sqrt{\frac{\Im m_c +|m_c|}{N\eta}}+\frac{1}{N\eta}
  .
\end{equation}
Suppose that $|w-\lambda_{\pm}|\geq M^{-1}$.
Then from propositions \ref{pr:sep} and \ref{pr:large} and lemmas \ref{lem:strong_error_1} and \ref{lem:strong_error_2}
we get that 
\begin{equation}
  \Lambda
  \leq
  \varphi^{4Q_{\zeta}}\left( \frac{\Im m_c + |m_c|}{|m_c|N\eta}+\frac{1}{(N\eta)^2}\right)
  \leq
  \varphi^{4Q_{\zeta}}\frac{1}{N\eta}
\end{equation}

In the case $|w-\lambda_{\pm}|\leq M^{-1}$ the iteration procedure used by Bourgade, Yau and Yin in \cite{BourYauYin} is applicable in our setting.
Note that in this case $|m_c|\sim 1$ and $\Im m_c \sim |w-\lambda_{\pm}|^{1/2}$.
The idea is that if, for example, $ M^{3/2} \varphi^{4Q_{\zeta}}(\Psi[\tilde{\Lambda_0}])^2\leq |w-\lambda_{\pm}|\leq M^{-1}$, then
\begin{equation}
  \Lambda
  \leq
  C \frac{\varphi^{4Q_{\zeta}}(\Im m_c+ \tilde{\Lambda}_0)}{N\eta|w-\lambda_{\pm}|^{1/2}}
  \leq
  C \varphi^{4Q_{\zeta}}\left(\frac{1}{N\eta}+ \frac{\tilde{\Lambda}_0}{N\eta|w-\lambda_{\pm}|^{1/2}}\right)
  .
\end{equation}
But 
\begin{equation}
  \varphi^{4Q_{\zeta}}\frac{\tilde{\Lambda}_0}{N\eta|w-\lambda_{\pm}|^{1/2}}
  \leq
  C\varphi^{4Q_{\zeta}}\frac{\tilde{\Lambda}_0}{N\eta }\frac{\sqrt{N\eta}}{\varphi^{2Q_{\zeta}}\sqrt{\Im m_c +\tilde{\Lambda}_0}}
  \leq
  C\varphi^{2Q_{\zeta}}\sqrt{\frac{\tilde{\Lambda}_0}{N\eta }}
  ,
\end{equation}
and thus
\begin{equation}
  \Lambda
  \leq
  C \left(\varphi^{4Q_{\zeta}}\frac{1}{N\eta}+\varphi^{2Q_{\zeta}}\sqrt{\frac{\tilde{\Lambda}_0}{N\eta }}\right)
  =:\tilde{\Lambda}_1
  \leq
  \tilde{\Lambda}_0
\end{equation}
with probability at least $1-e^{-\varphi^{\zeta}(\log N)^{-2}}$.
Note that $(\log N)^{-1}$ factor in the exponent appears due to \eqref{eq:abs_dec_cond_iii}.
If we repeat the above procedure with the error term $\Psi[\tilde{\Lambda}_0]$, we shall get again a better estimate
\begin{equation}
  \Lambda
  \leq
  C \left(\varphi^{4Q_{\zeta}}\frac{1}{N\eta}+\varphi^{2Q_{\zeta}}\sqrt{\frac{\tilde{\Lambda}_1}{N\eta }}\right)
\end{equation}
that holds with probability $1-e^{-\varphi^{\zeta}(\log N)^{-2}}$.
It was shown in \cite[Section~7.1]{BourYauYin} that if we iterate $K:=\log \log N/\log 2$ times, we shall obtain that
\begin{equation}
  \varphi^{2Q_{\zeta}} \sqrt{\frac{\tilde{\Lambda}_K}{N\eta }}
  \leq
  \frac{\varphi^{4Q_{\zeta}}}{N\eta }
\end{equation}
with probability at least $1-e^{-\varphi^{\zeta/2}}$.
Note that a similar argument applies in the regime when $|w-\lambda_{\pm}|\leq M^{3/2}\varphi^{4Q_{\zeta}}(\Psi[\tilde{\Lambda}_k])$ for some $k\in \{0,1,\ldots,K\}$.
Therefore, we deduce that \eqref{eq:conc_main} holds for all $w\in S_{z,\delta,\tilde{Q}_{\zeta}}$.


\appendix

\section{Proof of Proposition~\ref{pr:Gamma_inv}}

First of all, we can easily verify that if $I-\Gamma_1\Gamma_1^T$ is invertible, then
\begin{equation*}
  \begin{pmatrix}
    I & \Gamma_1
    \\
    \Gamma_1^T & I
  \end{pmatrix}
  ^{-1}
  =
  \begin{pmatrix}
    (I-\Gamma_1 \Gamma_1^{T})^{-1} & -\Gamma_1(I-\Gamma_1^T \Gamma_1)^{-1}
    \\
    -\Gamma_1^T(I-\Gamma_1 \Gamma_1^{T})^{-1} & (I-\Gamma_1^T \Gamma_1)^{-1} 
  \end{pmatrix}
  =
  \begin{pmatrix}
    I & -\Gamma_1
    \\
    -\Gamma_1^T & I
  \end{pmatrix}
  \begin{pmatrix}
    (I-\Gamma_1 \Gamma_1^{T})^{-1} & 0
    \\
    0 & (I-\Gamma_1 \Gamma_1^{T})^{-1} 
  \end{pmatrix}
  ,
\end{equation*}
where in the last equality we used that $\Gamma_1^T \Gamma_1=\Gamma_1 \Gamma_1^T = \mathrm{Circulant}(\gfr_1^2+\gfr_2^2, -\gfr_1\gfr_2,0,\ldots,0,-\gfr_1\gfr_2)$.
Therefore,
\begin{equation}
  \| (I-\Gamma_1 \Gamma_1^T)^{-1}\|_{\infty}
  \leq
  \|\Gamma^{-1}\|_{\infty}
  \leq
  2n \|\Gamma \|_{\infty}\| (I-\Gamma_1 \Gamma_1^T)^{-1}\|_{\infty}
  .
\end{equation}
Since $\gfr_1\sim\gfr_2\sim 1$, we get that all the non-zero entries of the matrix $\Gamma$ are of order $1$.
Thus, we deduce that $\|\Gamma^{-1}\|_{\infty}\sim \|(I-\Gamma_1\Gamma_1^{T})^{-1}\|_{\infty}$.
Note that
$I-\Gamma_1\Gamma_1^{T}=\mathrm{Circulant}(1-\gfr_1^2-\gfr_2^2, \gfr_1\gfr_2,0,\ldots,0,\gfr_1\gfr_2)$.
The following lemma allows us to calculate directly the eigenvalues of $I-\Gamma_1\Gamma_1^T$ and the entries of $(I-\Gamma_1\Gamma_1^T)^{-1}$.
\begin{lem}
  Suppose we have a circulant matrix $C=\mathrm{Circulant}(c_0,c_1,\ldots,c_{n-1})$ and let $l_j(C),1\leq j \leq n$ be eigenvalues of $C$. Then
  \begin{itemize}
  \item[(i)] $    l_j(C)=\sum_{k=0}^{n-1} c_k e^{2\pi\sqrt{-1}jk/n} 
    ,\quad
    j=1,\ldots,n
    ;$
  \item[(ii)](\cite[p. 91]{Aldr}) if $C$ is invertible, then $C^{-1}$ is a circulant matrix with coefficients
    \begin{equation}
      \frac{1}{n}\sum_{k=1}^{n}\frac{1}{l_k(C)}e^{jk\sqrt{-1}2\pi/n}
      ,\quad
      j=1,\ldots,n
    \end{equation}

  \end{itemize}
\end{lem}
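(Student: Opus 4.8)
The plan is to exploit the fact that all circulant matrices are simultaneously diagonalised by the discrete Fourier transform, so that both parts reduce to elementary identities for roots of unity. First I would write $P$ for the cyclic shift matrix, i.e.\ the circulant with first row $(0,1,0,\dots,0)$, characterised by $(Pv)_m=v_{m+1}$ with indices taken modulo $n$. Comparing entries shows $C=\mathrm{Circulant}(c_0,\dots,c_{n-1})=\sum_{k=0}^{n-1}c_kP^{k}$, so $C$ is a polynomial in $P$ and it is enough to diagonalise $P$.

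Set $\omega:=e^{2\pi\sqrt{-1}/n}$ and, for $j=1,\dots,n$, let $v_j:=(1,\omega^{j},\omega^{2j},\dots,\omega^{(n-1)j})^{T}$. A one-line check gives $Pv_j=\omega^{j}v_j$, hence $P^{k}v_j=\omega^{jk}v_j$ and
\[
  Cv_j=\Big(\sum_{k=0}^{n-1}c_k\omega^{jk}\Big)v_j ,
\]
which is (i), with $l_j(C)=\sum_{k=0}^{n-1}c_ke^{2\pi\sqrt{-1}jk/n}$. Since the $v_j$ are pairwise orthogonal ($\langle v_j,v_{j'}\rangle=\sum_m\omega^{(j-j')m}$ equals $n$ if $j\equiv j'\bmod n$ and $0$ otherwise), writing $V$ for the matrix with columns $v_1,\dots,v_n$ we get $V^{*}V=nI$ and $C=\tfrac1n V\,\mathrm{diag}(l_1,\dots,l_n)\,V^{*}$.

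For (ii) the key observation is that $V$ does not depend on $C$: conjugation by $n^{-1/2}V$ is a linear bijection from the algebra of circulant matrices onto the algebra of diagonal matrices (the spanning matrices $I,P,\dots,P^{n-1}$ go to a basis of the diagonal algebra via the DFT). Hence, when $C$ is invertible, $C^{-1}=\tfrac1n V\,\mathrm{diag}(l_1^{-1},\dots,l_n^{-1})\,V^{*}$ is again circulant, with eigenvalues $l_j(C)^{-1}$. Writing $C^{-1}=\mathrm{Circulant}(d_0,\dots,d_{n-1})$ and applying part (i) to $C^{-1}$ gives $\sum_k d_k\omega^{jk}=l_j(C)^{-1}$; inverting this discrete Fourier transform — equivalently, reading off the entry $(C^{-1})_{m,\ell}=\tfrac1n\sum_j\omega^{jm}l_j^{-1}\omega^{-j\ell}$, which depends only on $m-\ell$ — yields the coefficients $d_q=\tfrac1n\sum_{k=1}^{n}l_k(C)^{-1}e^{qk\sqrt{-1}2\pi/n}$, the stated formula.

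I do not expect any substantial obstacle: this is textbook material, which is precisely why (ii) is quoted from \cite{Aldr}. The only thing requiring care is bookkeeping of conventions — the direction in which the shift runs in the definition of $\mathrm{Circulant}(\cdot)$, and whether the eigenvalues appearing in (ii) are labelled as in (i) or by $j\mapsto n-j$ — since these choices only change the sign in the exponent and must be kept consistent with the convention under which (i) holds. In the intended application to $I-\Gamma_1\Gamma_1^{T}$ the point is moot anyway, as that circulant has a palindromic coefficient vector and is symmetric.
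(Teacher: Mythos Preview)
Your argument is the standard, correct proof via simultaneous diagonalisation by the DFT, and your caveat about sign/indexing conventions is exactly the right one. The paper gives no proof of this lemma at all---part (i) is treated as well known and part (ii) is simply cited from \cite{Aldr}---so there is nothing to compare; your write-up supplies what the paper omits.
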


Therefore, to study the behaviour of $\|\Gamma^{-1}\|$ it is enough to study the eigenvalues of $I-\Gamma_1 \Gamma_1^{T}$.
From the above formula, for $1\leq j\leq n$
\begin{equation}\label{eq:app_1}
  l_j(I-\Gamma_1 \Gamma_1^T)
  =
  1-\gfr_1^2 -\gfr_2^2 + \gfr_1\gfr_2 (e^{j \sqrt{-1}2\pi/n}+e^{j (n-1)\sqrt{-1}2\pi/n})
  =
  1-\gfr_1^2 -\gfr_2^2 + \gfr_1\gfr_2 2\Re e^{j \sqrt{-1}2\pi/n}
  .
\end{equation}

\paragraph{Case 1.}

We shall show that $l_n(I-\Gamma_1 \Gamma_1^T)$ behaves as $|w-\lambda_{\pm}|^{1/2}$ when $w$ is close to $\lambda_{\pm}$.
Note that $l_n(I-\Gamma_1 \Gamma_1^T)= 1-\gfr_1^2 -\gfr_2^2 +2 \gfr_1\gfr_2 = (1-(\gfr_1-\gfr_2))(1+(\gfr_1-\gfr_2))$.
From the formulas \eqref{eq:prop_mc_lambda_2} we have that if $w=\lambda_{\pm}$, then
\begin{equation}\label{eq:app_2}
  1-(\gfr_1-\gfr_2)
  =
  1-(\frac{1}{\lambda_{\pm}m_c^2(\lambda_{\pm})}-\frac{|z|^2}{\lambda_{\pm}(1+m_c(\lambda_{\pm}))^2})
  =
  0
  .
\end{equation}
Again using \eqref{eq:prop_mc_lambda_2} we can see that if $w$ is in the neighbourhood of $\lambda_{\pm}$, then
\begin{EA}{rl}
  \frac{1}{w m_c^2(w)}
  &=
  \frac{1}{\lambda_{\pm} m_c^2(\lambda_{\pm})}-\frac{2\beta_{\pm} \sqrt{w-\lambda_{\pm}}}{ m_c(\lambda_{\pm})\lambda_{\pm} m_c^2(\lambda_{\pm})} +O(|w-\lambda_{\pm}|)
  \\
  \frac{1}{w (1+m_c(w))^2}
  &=
  \frac{1}{\lambda_{\pm} (1+m_c(\lambda_{\pm}))^2}-\frac{2\beta_{\pm} \sqrt{w-\lambda_{\pm}}}{(1+ m_c(\lambda_{\pm}))\lambda_{\pm} (1+m_c(\lambda_{\pm}))^2} +O(|w-\lambda_{\pm}|)
\end{EA}
where 
\begin{equation}
    \beta_{\pm}
    =
    \sqrt{\frac{8(\pm 1+\afr)^3}{\pm\afr(\pm 3+\afr)^5}}  
\end{equation}
Now it is enough to show that the coefficient near $\sqrt{w-\lambda_{\pm}}$ does not vanish.
Using the exact formula for $m_c(\lambda_{\pm})$ we have that
\begin{equation}\label{eq:app_3}
  \frac{|z|^2}{(1+m_c(\lambda_{\pm}))^3}-\frac{1}{m_c^3(\lambda_{\pm})}
  =
  \frac{(\afr\pm 3)^{3}}{\mp 8 (\afr\pm 1)^{3}}(\mp 8 |z|^2 - (\afr\pm 1)^{3})  
\end{equation}
Therefore we need to show that
\begin{equation}
\sqrt{\frac{(\afr\pm 3)}{8\afr  (\afr\pm 1)^{3}}}|(\afr\pm 1)^{3}\pm 8 |z|^2  |
\end{equation}
is bounded away from zero.
This follows easily from the fact that we consider the case $1+\tau \leq |z|\leq \tau^{-1}$
and the equality
\begin{equation}\label{eq:app_4}
  (\afr \pm 1)^{3}\pm 8|z|^2
  =
  \sqrt{1+8|z|^2}(\sqrt{1+8|z|^2}\pm 1)(\sqrt{1+8|z|^2}\pm 3)
  .
\end{equation}
This concludes the proof of \eqref{eq:Gamma_inv_1}.
\begin{rem}
  If we take $|z|=1+\tau$ then the coefficient near $\sqrt{w-\lambda_+}$ is bounded away from zero, while the coefficient near $\sqrt{w-\lambda_-}$ is of order $\OO{\tau}$ as $\tau\rightarrow 0$.
\end{rem}

\paragraph{Case 2.}

The proof is similar to the Case~1 with $1\leq \afr\leq 3$ and the coefficient near $\sqrt{w-\lambda_+}$
\begin{equation}
\sqrt{\frac{(\afr+ 3)}{8\afr  (\afr+ 1)^{3}}}|(\afr+ 1)^{3}+ 8 |z|^2  |
\geq
C
.
\end{equation}

\paragraph{Case 3.}
Suppose that $|z|\leq 1-\tau$.
Then using \eqref{eq:prop_mc_zero} we get an approximation of $\gfr_1$ and $\gfr_2$ for $|w|\leq \tau$
\begin{EA}{rl}
  \frac{1}{wm_c^2}
  &=
  \frac{1}{-(1-|z|^2)^2}+\frac{\sqrt{-1}(1-2|z|^2)\sqrt{w}}{(1-|z|^2)^4}+O(w)
  ,\\
  \frac{|z|^2}{w(1+m_c)^2}
  &=
  \frac{|z|^2}{-(1-|z|^2)^2}-\frac{\sqrt{-1}|z|^2\sqrt{w}}{(1-|z|^2)^4}+O(w)
\end{EA}

Then for $1\leq j\leq n$

\begin{EA}{rl}
  l_j(I-\Gamma_1\Gamma_1^T)
  =&
      1-\left(\frac{1}{w m_c^2(w)}\right)^2-\left(\frac{|z|^2}{w (1+m_c(w))^2}\right)^2+2\omega_j\frac{|z|^2}{w m_c^2(w)w(1+m_c(w))^2}
    \\
    =&|z|^2\frac{-4+2\omega_j+5|z|^2-4|z|^4+|z|^6}{(1-|z|^2)^4}
    +\sqrt{w}\frac{2\sqrt{-1}}{(1-|z|^2)^6}(1-2|z|^2-|z|^4+2\omega_j|z|^4)
    +O(w)
\end{EA}
where we denoted $\Re e^{j\sqrt{-1}2\pi/n}$ by $\omega_j\in [-1,1]$.
Since $|z|\leq 1-\tau$, we have
\begin{equation}
  \frac{-4+2\omega_j}{\tau^4}
  \leq
  \frac{-4+2\omega_j+5|z|^2-4|z|^4+|z|^6}{(1-|z|^2)^4}
  \leq
  -2+2\omega_j -\tau^2-\tau^3
\end{equation}

If $|z|^2\geq 1/3$, then we can find $\tilde{\tau}>0$ small enough such that for any $1\leq j\leq n$ and $|w|\leq \tilde{\tau}$
\begin{equation}
  |\sqrt{w}\frac{2\sqrt{-1}}{(1-|z|^2)^6}(1-2|z|^2-|z|^4+2\omega_j|z|^4)|
  \leq
  |-2+2\omega_j -\tau^2-\tau^3|\frac{1}{6}
  ,
\end{equation}
so that $|l_j(I-\Gamma_1\Gamma_1^T)|$ is bounded away from zero. 

If $|z|^2\leq 1/3$, then 
\begin{equation}
  1-2|z|^2-|z|^4+2\omega_j|z|^4
  \geq 
  0
  ,
\end{equation}
and this implies that 
\begin{equation}
  \Re \sqrt{w}\frac{2\sqrt{-1}}{(1-|z|^2)^6}(1-2|z|^2-|z|^4+2\omega_j|z|^4)
  \leq
  0
  .
\end{equation}
Thus 
\begin{equation}
  ||z|^2\frac{-4+2\omega_j+5|z|^2-4|z|^4+|z|^6}{(1-|z|^2)^4}+\sqrt{w}\frac{2\sqrt{-1}}{(1-|z|^2)^6}(1-2|z|^2-|z|^4+2\omega_j|z|^4)|
  \sim
  |z|^2 +\sqrt{|w|}
  ,
\end{equation}
which concludes the proof of Case 3.

\paragraph{Case 4.}

Suppose firstly that $|z|\leq \tilde{\varepsilon}$ for some $\tilde{\varepsilon}>0$.
\begin{equation}
      l_j(I-\Gamma_1\Gamma_1^T)
    =
    1-\frac{1}{(wm_c^2)^2}-\frac{|z|^4}{(w(1+m_c)^2)^2}+2\omega_j \frac{|z|^2}{wm_c^2w(1+m_c)^2}
  \end{equation}
Since for $\max\{\lambda_-,0\}+\tilde{\tau}\leq E\leq \lambda_+-\tau$ the imaginary part of $m_c$ is of order $1$, there exists $c>0$ such that 
\begin{equation}
    |1-\frac{1}{(wm_c^2)^2}|
    \geq
    c
\end{equation}
Therefore, if we take $\tilde{\varepsilon}$ so small that
\begin{equation}
    |z|^2|\frac{|z|^2}{(w(1+m_c)^2)^2}-2\omega_j \frac{1}{wm_c^2w(1+m_c)^2}|
    \leq
    c/2
\end{equation}
then we get that $|l_j(I-\Gamma_1\Gamma_1^{T})|>c/2$

Consider now the cases
\begin{itemize}
\item[(i)] $\tilde{\varepsilon}\leq |z|\leq 1-\tau$, $\tilde{\tau}\leq E\leq \lambda_+-\tau$ and $\eta=0$, or
\item[(ii)] $1+\tau\leq |z|\leq \tau^{-1}$, $\lambda_-+\tau\leq E\leq \lambda_+-\tau$ and $\eta=0$ 
\end{itemize}
Suppose that in these cases $|\det(I-\Gamma_1\Gamma_1^T)|\geq c$ for some $c>0$.
Since $|\det(I-\Gamma_1\Gamma_1^T)|$ is a continuous function of $z$ and $w$, using the continuous dependence on $\eta$ we can find $\varepsilon>0$ such that $|\det(I-\Gamma_1\Gamma_1^T)|\geq c/2$ on the sets $\{\tilde{\varepsilon}\leq |z|\leq 1-\tau,\tilde{\tau}\leq E\leq \lambda_+-\tau,0\leq \eta\leq \varepsilon\}$ and $\{1+\tau\leq |z|\leq \tau^{-1},\lambda_-+\tau\leq E\leq \lambda_+-\tau, 0\leq \eta\leq \varepsilon\}$.
Thus, it is enough to show that under the conditions $(i)$ or $(ii)$ the eigenvalues $l_j(I-\Gamma_1\Gamma_1^T),1\leq j\leq n$ do not vanish.

We start with some simplifications. Firstly, from \eqref{eq:mc_equation} we have that
\begin{equation}
  \gfr_2
  =
  \frac{m_c}{1+m_c} \gfr_1 +1
  .
\end{equation}
Thus, we can rewrite the formulas for the eigenvalues of $I-\Gamma_1\Gamma_1^T$ as follows
\begin{EA}{ll}
  l_j(I-\Gamma_1\Gamma_1^T)
  &=
  1-\gfr_1^2 -1-2\frac{m_c}{1+m_c} \gfr_1 -(\frac{m_c}{1+m_c})^2 \gfr_1^2 +2\omega_j \gfr_1(\frac{m_c}{1+m_c} \gfr_1+1)
  \\
  &=
  -\gfr_1(\gfr_1 +2\frac{m_c}{1+m_c}  +(\frac{m_c}{1+m_c})^2 \gfr_1 -2\omega_j (\frac{m_c}{1+m_c} \gfr_1+1))
\end{EA}
Recall that $\gfr_1\sim 1$. 
This means that $j$th eigenvalue of $I-\Gamma_1\Gamma_1^T$ is equal to zero if and only if 
\begin{equation}\label{eq:app_5}
  \gfr_1(1+(\frac{m_c}{1+m_c})^2 -2\omega \frac{m_c}{1+m_c})+2\frac{m_c}{1+m_c}-2\omega 
  =
  0
\end{equation}
for $\omega=\omega_j$.
We are going to show that under conditions $(i)$ or $(ii)$ equation \eqref{eq:app_5} has no solution for $\omega\in[-1,1]$.
Using again the equation \eqref{eq:mc_equation} we have
\begin{EA}{ll}
  \gfr_1(1+(\frac{m_c}{1+m_c})^2& -2\omega \frac{m_c}{1+m_c})+2\frac{m_c}{1+m_c}-2\omega 
  =
  \frac{1}{m_c}\frac{1}{wm_c}+\frac{1}{1+m_c}\frac{1}{w(1+m_c)}-\frac{2\omega}{wm_c(1+m_c)} +2\frac{m_c}{1+m_c}   -2\omega 
  \\
  &=
  \frac{1}{m_c}(-1-m_c+\frac{|z|^2}{w(1+m_c)})+\frac{1}{1+m_c}(1+m_c+\frac{1}{wm_c})\frac{1}{|z|^2}-\frac{2\omega}{wm_c(1+m_c)} +2\frac{m_c}{1+m_c}   -2\omega
  \\
  &=
  \frac{1}{wm_c(1+m_c)}(|z|^2+\frac{1}{|z|^2}-2\omega)-\frac{1}{m_c}-1+\frac{1}{|z|^2} + 2 - \frac{2}{1+m_c}   -2\omega
  \\
  &=
  (\frac{1}{wm_c}-\frac{1}{w(1+m_c)})(|z|^2+\frac{1}{|z|^2}-2\omega)-\frac{1}{m_c}- \frac{2}{1+m_c}+\frac{1}{|z|^2} + 1    -2\omega
\end{EA}
Define
\begin{equation}
  d_1
  :=
  |z|^2+\frac{1}{|z|^2}-2\omega
  ,\quad
  d_2
  :=
  \frac{1}{|z|^2}+1-2\omega
\end{equation}
Then \eqref{eq:app_5} is equivalent to 
\begin{equation}\label{eq:app_6}
  (\frac{1}{wm_c}-\frac{1}{w(1+m_c)})d_1=\frac{1}{m_c}+ \frac{2}{1+m_c}-d_2
\end{equation}
We now take imaginary part of the above equation.
Note, that we consider the case $\eta=0, w=E$.
\begin{equation}
  (\frac{\Im m_c}{E|m_c|^2}-\frac{\Im m_c}{E|1+m_c|^2})d_1=\frac{\Im m_c}{|m_c|^2}+ \frac{2\Im m_c}{|1+m_c|^2}
\end{equation}
If $E\in(\max\{0,\lambda_-\},\lambda_+)$, then $\Im m_c >0$. 
Thus we can divide by $\Im m_c$ and obtain 
\begin{equation}\label{eq:app_7}
    (\frac{1}{E|m_c|^2}-\frac{1}{E|1+m_c|^2})d_1
    =
    \frac{1}{|m_c|^2}+ \frac{2}{|1+m_c|^2}  
\end{equation}
Consider now the real part of \eqref{eq:app_6}
\begin{equation}
   \Re m_c (\frac{1 }{E|m_c|^2}-\frac{1 }{E|1+m_c|^2})d_1-\frac{1}{E|1+m_c|^2}d_1
   =
   \Re m_c(\frac{1 }{|m_c|^2}+ \frac{2}{|1+m_c|^2})+\frac{2}{|1+m_c|^2}-d_2
\end{equation}
Together with \eqref{eq:app_7} we obtain
\begin{equation}\label{eq:app_8}
  -\frac{1}{E|1+m_c|^2}d_1
  =
  \frac{2}{|1+m_c|^2}-d_2
\end{equation}
\eqref{eq:app_7} and \eqref{eq:app_8} give us
\begin{equation}\label{eq:app_9}
  \frac{1}{E|m_c|^2}d_1
  =
  \frac{1}{|m_c|^2}+d_2
\end{equation}
Therefore, we have rewritten equation \eqref{eq:app_5} as a system \eqref{eq:app_8}-\eqref{eq:app_9}.
Together with the real and imaginary parts of \eqref{eq:mc_equation} we obtain the following system of equations
\begin{equation}
  \left\{
    \begin{array}{ll}
      (a)&
      \frac{1}{E|m_c|^2}(d_1-E)
      =
      d_2
      \\
      (b)&
      \frac{1}{E|1+m_c|^2}(d_1+2E)
      =
      d_2
      \\
      (c)&
      2 \Re m_c+1
      =
      \frac{|z|^2}{E|1+m_c|^2}
      \\
      (d)&
      \frac{1}{E|m_c|^2}=1+\frac{|z|^2}{E|1+m_c|^2}
    \end{array}
  \right.
\end{equation}
Note that $d_1\geq 0$ and $E>0$, so that from $(a)$ and $(b)$ we get $d_2>0$ and $d_1-E>0$.
Thus we can rewrite the above system as
\begin{equation}
  \left\{
    \begin{array}{ll}
      (a')&
           E|m_c|^2 
           =
           \frac{d_1-E}{d_2}
      
      \\
      (b')&
           E|1+m_c|^2 
           =
           \frac{d_1+2E}{d_2}
      \\
      (c)&
           2 \Re m_c+1
           =
           \frac{|z|^2}{E|1+m_c|^2}
      \\
      (d)&
           \frac{1}{E|m_c|^2}
           =
           1+\frac{|z|^2}{E|1+m_c|^2}
    \end{array}
  \right.
\end{equation}
If we take the difference of equations $(b')$ and $(a')$ we get $2\Re m_c+1$ on the left-hand side, and applying $(c)$ gives the following equation
\begin{equation}
  (e)\quad
  \frac{|z|^2}{E|1+m_c|^2}
      =
      \frac{3}{d_2}
\end{equation}
Replacing either $(a)$ or $(b)$ by $(e)$ we obtain two systems
\begin{equation}
    \left\{
    \begin{array}{ll}
      (e)&
      \frac{|z|^2}{E|1+m_c|^2}
      =
      \frac{3}{d_2}
      \\
      (a)&
      \frac{1}{E|m_c|^2}
      =
      \frac{d_2}{d_1-E}
      \\
      (c)&
      2 \Re m_c+1
      =
      \frac{|z|^2}{E|1+m_c|^2}
      \\
      (d)&
      \frac{1}{E|m_c|^2}
           =
           1+\frac{|z|^2}{E|1+m_c|^2}
    \end{array}
  \right.
  \mbox{ and }
  \left\{
    \begin{array}{ll}
      (e)&
      \frac{|z|^2}{E|1+m_c|^2}
      =
      \frac{3}{d_2}
      \\
      (b)&
      \frac{1}{E|1+m_c|^2}
      =
      \frac{d_2}{d_1+2E}
      \\
      (c)&
      2 \Re m_c+1
      =
      \frac{|z|^2}{E|1+m_c|^2}
      \\
      (d)&
      \frac{1}{E|m_c|^2}=1+\frac{|z|^2}{E|1+m_c|^2}
    \end{array}
  \right.
\end{equation}
Equations $(e),(a)$ and $(d)$ imply that $d_1$ and $d_2$ satisfy the equation
\begin{equation}
  \frac{d_2}{d_1-E}
      =
      \frac{3}{d_2}+1
      ,
\end{equation}
while $(e)$ and $(b)$ give
\begin{equation}
  \frac{3}{|z|^2 d_2}
      =
      \frac{d_2}{d_1+2E}
      .
\end{equation}
From the definition of $d_1$ and $d_2$ we have that $d_1=d_2+|z|^2-1$.
We end up with a following system
\begin{equation}\label{eq:app_10}
  \left\{
    \begin{array}{l}
      d_2
      =
      \frac{3(1-|z|^2+E)}{2+|z|^2-E}
      \\
      d_2^2-\frac{3}{|z|^2}d_2+\frac{3}{|z|^2}(1-|z|^2-2E)
      =
      0
    \end{array}
    \right.
\end{equation}
We are now going to fix $z$ that satisfies condition $(i)$ or $(ii)$, and we will consider $d_2$ as a function of $E$.
We will show that for any $E$ satisfying condition $(i)$ or $(ii)$ and for any $\omega\in [-1,1]$ this system does not have a solution equal to $|z|^{-2}+1-2w$.

\emph{Case 1: $1+\tau\leq |z|\leq \tau^{-1}$.} 
We fix $z$.
Define following functions
\begin{EA}{l}
  f_1^{\pm}(E):=\frac{1}{2}(\frac{3}{|z|^2}\pm \sqrt{\frac{3}{|z|^2}(\frac{3}{|z|^2}-4(1-|z|^2-2E))})
  ,\\ 
  f_2(E):=-3+\frac{9}{2+|z|^2-E}
  .
\end{EA}
First of all, note that $f_1^{-}$ is decreasing and
\begin{equation}
    f_1^{-}(\frac{1-|z|^2}{2})=0
    ,\quad
    f_2(|z|^2-1)=0
    ,
\end{equation}
and thus the graphs of $f_1^-$ and $f_2$ do not intersect in the right upper quarter-plane.
We now show that at the point of intersection of $f_1^+$ and $f_2$ the value of the functions is strictly larger than $3+|z|^{-2}$.

Let $E_1$ and $E_2$ be the point on $\RR_+$ such that $f_1^+(E_1)=f_2(E_2)=3+|z|^{-2}$.
Then
\begin{equation}
  E_1
  =
  |z|^2-\frac{1}{8|z|^2}(3-\frac{1}{3}),
  \quad
  E_2
  =
  |z|^2+2-\frac{9|z|^2}{6|z|^2+1}
  .
\end{equation}
If $|z|\geq 1+\tau$ then $2-\frac{9|z|^2}{6|z|^2+1}>0$, therefore $E_1<E_2$.
We deduce that at the point of the intersection the value of the functions is strictly larger than $3+|z|^{-2}\geq d_2$.
As a result, we conclude that the system \eqref{eq:app_10} has no solution.

The graph below shows how the functions $f_1^{\pm}$ are situated with respect to the function $f_2$.

\setlength{\unitlength}{1cm}
\begin{figure}
  \centering
  \begin{picture}(10,10)
    \linethickness{0.2mm}
    \put(0,5){\vector(1,0){10}}
    \put(5,0){\vector(0,1){10}}
    \linethickness{0.1mm}
    \put(0,2){\line(1,0){10}}
    \put(8,0){\line(0,1){10}}
    \put(5,9){\line(1,0){3}}
    \put(7.89,5){\line(0,1){4}}
    \put(6.95,5){\line(0,1){4}}
    \put(3,6){\line(1,0){2}}
    \linethickness{0.3mm}
    \qbezier(8.2,0)(8.6,1.4)(10,1.8)
    \qbezier(0,2.1)(8,2)(7.9,10)
    \qbezier(3,6)(3,8)(10,10)
    \qbezier(3,6)(3,4)(10,2)
    \put(8.1,4.7){$2+|z|^2$}
    \put(6.6,5.1){$E_1$}
    \put(7.6,5.1){$E_2$}
    \put(3.8,9){$3+\frac{1}{|z|^2}$}
    \put(4,3){$f_2$}
    \put(2.5,6){$f_1^{\pm}$}
    \put(9.7,4.7){$E$}
    \put(5,6){$\frac{3}{2|z|^2}$}
    \put(5,2){$-3$}
  \end{picture}

  \caption{Position of the graphs of the functions $f_1^{\pm}$ and $f_2$ with respect to each other for $|z|>1$.}
\end{figure}
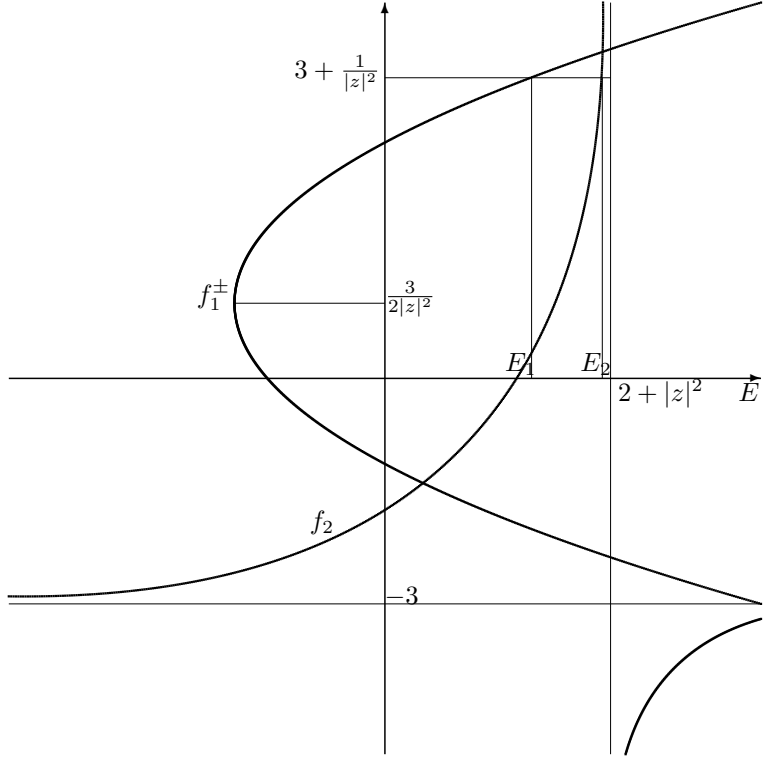
  
\emph{Case 2: $\tilde{\tau}\leq |z|\leq 1- \tau$.} 
We fix $z$ and we define $f_1^{\pm}(E)$ and $f_2(E)$ as above.
As in the Case~1, we start with the zeroes of the functions $f_1^-$ and $f_2$
  \begin{equation}
    f_1^{-}(\frac{1-|z|^2}{2})=0
    ,\quad
    f_2(|z|^2-1)=0
    .
\end{equation}
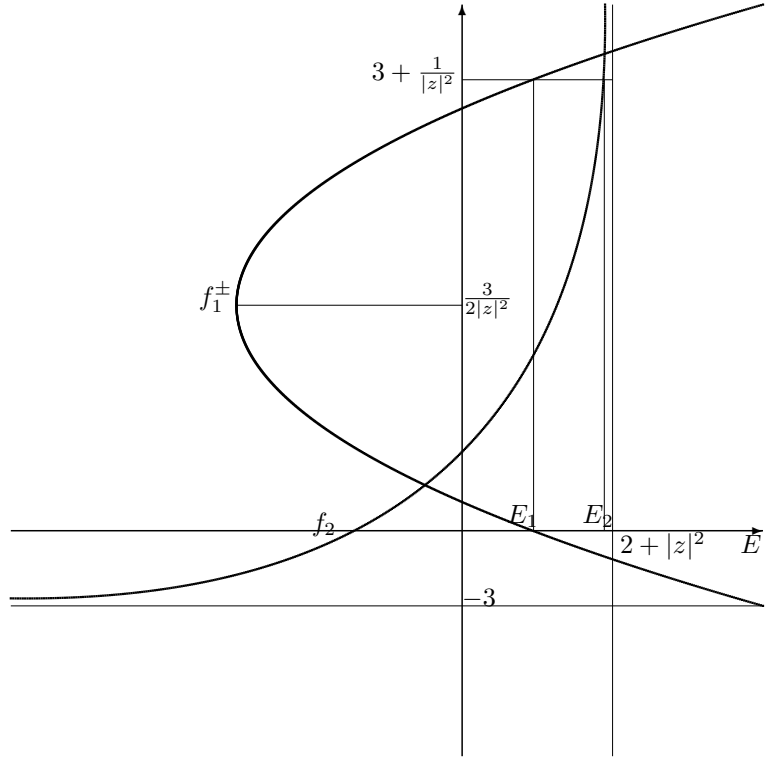
\begin{figure}
  \centering
  \begin{picture}(10,10)
    \linethickness{0.2mm}
    \put(0,3){\vector(1,0){10}}
    \put(6,0){\vector(0,1){10}}
    \linethickness{0.1mm}
    \put(0,2){\line(1,0){10}}
    \put(8,0){\line(0,1){10}}
    \put(6,9){\line(1,0){2}}
    \put(7.89,3){\line(0,1){6}}
    \put(6.95,3){\line(0,1){6}}
    \put(3,6){\line(1,0){3}}
    \linethickness{0.3mm}
    \qbezier(0,2.1)(8,2)(7.9,10)
    \qbezier(3,6)(3,8)(10,10)
    \qbezier(3,6)(3,4)(10,2)
    \put(8.1,2.7){$2+|z|^2$}
    \put(6.6,3.1){$E_1$}
    \put(7.6,3.1){$E_2$}
    \put(4.8,9){$3+\frac{1}{|z|^2}$}
    \put(4,3){$f_2$}
    \put(2.5,6){$f_1^{\pm}$}
    \put(9.7,2.7){$E$}
    \put(6,6){$\frac{3}{2|z|^2}$}
    \put(6,2){$-3$}
  \end{picture}

  \caption{Position of the graphs of the functions $f_1^{\pm}$ and $f_2$ with respect to each other for $|z|<1$.}
\end{figure}
The graphs of these functions intersect in the upper half-plane, but $f_1^-(0)<f_2(0)$, thus we deduce that the coordinate of the intersection is in $\RR_-$. We now need to show that at the point of the intersection of the functions $f_1^+$ and $f_2$ the value of these function is strictly larger than $3+|z|^{-2}$.

Again, define the level point $E_1$ and $E_2$ such that $f_1^+(E_1)=f_2(E_2)=3+|z|^{-2}$, and note that $E_1<E_2$.
This completes the proof in the Case~2.
The graph below shows the functions $f_1^{\pm}$ and $f_2$.

\paragraph{Acknowledgements.}

I would like to thank my PhD advisors Mireille Capitaine and Michel Ledoux for introducing the problem to me, fruitful discussions and reading the manuscript.

\bibliographystyle{plain}
\bibliography{bib}
\noindent
\\
\\
\textsc{Yuriy Nemish,}
\\
\textsc{\small{Institut de Mathématiques de Toulouse,
UMR 5219 du CNRS\\
Université de Toulouse, F-31062, Toulouse, France}}
\\
\textit{E-mail address:}
\texttt{yuriy.nemish@math.univ-toulouse.fr}

\end{document}